\newcommand{\rrVert}{\Vert}
\newcommand{\rrvert}{\vert}
\newcommand{\llVert}{\Vert}
\newcommand{\llvert}{\vert}
\newtheorem{theorem}{Theorem}[section]
\newtheorem{lemma}[theorem]{Lemma}
\newtheorem{corollary}[theorem]{Corollary}
\newtheorem{proposition}[theorem]{Proposition}
\newcommand{\F}{\mathcal{F}}
\newcommand{\G}{\mathcal{G}}
\newcommand{\E}{\mathbb{E}}
\newcommand{\R}{\mathbb{R}}
\newcommand{\ud}{\mathrm{d}}
\renewcommand{\P}{\mathbb{P}}
\newcommand{\Var}{\operatorname{Var}}
\newcommand{\PF}{\operatorname{PF}}
\newcommand{\Hess}{\operatorname{Hess}}
\newcommand{\Poisson}{\operatorname{Poisson}}
\newcommand{\AR}{\operatorname{AR}}
\renewcommand{\emptyset}{\varnothing}
\renewcommand{\mid}{\vert}
\newcommand{\eqref}[1]{(\ref{#1})}
\newcommand{\xrightarrow}{\hbox to 30pt{\rightarrowfill}}
\begin{document}
\begin{frontmatter}

\title{Markovian stochastic approximation with expanding projections}
\runtitle{Markovian stochastic approximation with expanding projections}

\begin{aug}
\author[1]{\fnms{Christophe} \snm{Andrieu}\thanksref{1}\ead[label=e1]{C.Andrieu@bristol.ac.uk}} \and
\author[2]{\fnms{Matti} \snm{Vihola}\corref{}\thanksref{2}\ead[label=e2]{matti.vihola@iki.fi}}
\runauthor{C. Andrieu and M. Vihola} 
\address[1]{School of Mathematics, University of Bristol, BS8 1TW, United
Kingdom.\\ \printead{e1}}
\address[2]{Department of Mathematics and Statistics, University of
Jyv\"{a}skyl\"{a}, P.O. Box 35, FI-40014, Finland. \printead{e2}}
\end{aug}

\received{\smonth{11} \syear{2011}}
\revised{\smonth{9} \syear{2012}}

%
\begin{abstract}
Stochastic approximation is a framework unifying many random iterative
algorithms occurring in a diverse range of applications. The stability of
the process is often difficult to verify in practical applications and the
process may even be unstable without additional stabilisation
techniques. We
study a stochastic approximation procedure with expanding projections
similar to Andrad\'{o}ttir [\textit{Oper. Res.} \textbf{43} (1995)
1037--1048]. We focus
on Markovian noise and show the stability and convergence under general
conditions. Our framework also incorporates the possibility to use a random
step size sequence, which allows us to consider settings with a non-smooth
family of Markov kernels. We apply the theory to stochastic
approximation expectation
maximisation with particle independent Metropolis--Hastings sampling.
\end{abstract}

%
\begin{keyword}
\kwd{expectation maximisation}
\kwd{independent Metropolis--Hastings}
\kwd{particle Markov chain Monte Carlo}
\kwd{stability}
\kwd{stochastic approximation}
\end{keyword}

\end{frontmatter}

\section{Introduction}\label{sec:intro} 

Stochastic approximation (SA) is concerned with finding the zeros of a
function defined on the space $\Theta\subset\mathbb{R}^{d}$ as
%
\begin{equation}\label{eq:mean-field}
h(\theta):=\int_{\mathsf{X}}H(\theta,x)\pi_{\theta}(\ud x) ,
\end{equation}
where $\{\pi_{\theta}\}_{\theta\in\Theta}$ is a family of probability
distributions on a generic measurable space
$ (\mathsf{X},\mathcal{B}(\mathsf{X}) )$ and
$H\dvtx \Theta\times\mathsf{X}\to\Theta$
is a measurable function. In numerous situations $h$ behaves like
a gradient, suggesting that a recursion of the type $\theta_{i+1}=\theta_{i}+\gamma_{i+1}h(\theta_{i})$
where $(\gamma_{i})_{i\ge1}$ is a sequence of nonnegative step sizes
decaying to zero, can
be used to find the aforementioned roots.

Often in applications, the integral \eqref{eq:mean-field}
needs to be approximated numerically. We focus here on methods
relying on Monte Carlo simulation where sampling exactly from $\pi_{\theta}$
for any $\theta\in\Theta$ is not possible directly and instead Markov
chain Monte Carlo methods are used. Let
$\{P_{\theta}\}_{\theta\in\Theta}$ be a family
of Markov transition probabilities with stationary
distributions
$\{\pi_\theta\}_{\theta\in\Theta}$, respectively.
Then, the standard SA recursion with Markovian dynamic is as follows
\begin{eqnarray*}
X_{i+1}\mid\theta_0,X_0,\ldots,
\theta_i,X_i &\sim& P_{\theta_{i}}(X_{i},
\cdot ),
\\
\theta_{i+1}&=&\theta_{i}+\gamma_{i+1}H(
\theta_{i},X_{i+1}) .
\end{eqnarray*}

Stability of this process is far from obvious and a significant effort
has been dedicated to its study (e.g., \cite{benaim}, Section~7.3).
Problems occur in particular when
ergodicity, a term to be made more precise later, of $P_{\theta}$
vanishes as $\theta$ approaches a set of critical values denoted
$\partial\Theta$ hereafter. Younes \cite{younes}, Section~6.3,
gives an example of a situation where the Robbins--Monro algorithm fails for
this reason.

Cures include projection on a fixed
set $\mathcal{R}_{0}\subset\Theta$, that is, given
a projection mapping $\Pi_{\mathcal{R}_{0}}\dvtx \Theta\setminus
\mathcal{R}_0\rightarrow\mathcal{R}_{0}$,
one can define \cite{kushner-clark,kushner-yin-sa}
\begin{eqnarray*}
\theta_{i+1}^{\ast} & =&\theta_{i}+
\gamma_{i+1}H(\theta_{i},X_{i+1}),
\\
\theta_{i+1} & =&\theta_{i+1}^{\ast}\mathbb{I}\bigl
\{\theta_{i+1}^{\ast
}\in\mathcal{R}_{0}\bigr\}+
\Pi_{\mathcal{R}_{0}}\bigl(\theta_{i+1}^{\ast
}\bigr)\mathbb{I}
\bigl\{\theta_{i+1}^{\ast}\notin\mathcal{R}_{0}\bigr
\} .
\end{eqnarray*}
Projection on a fixed set $\mathcal{R}_{0}$ might not be satisfactory
when for example the location of the zeros of $h(\theta)$
is not known a priori. It is also possible that the projection induces
spurious attractors on the boundary of $\mathcal{R}_0$.

Adaptive projections overcome these difficulties
by considering an increasing sequence of
projection sets
$\{\mathcal{R}_{i}\}_{i\ge0}$ which forms
a covering of $\Theta$.
The process is defined through
\cite{chen-varying,chen-guo-gao,chen-sa,sa-verifiable,tadic-random-truncations}
\begin{eqnarray*}
\theta_{i+1}^{\ast} & =&\theta_{i}+
\gamma_{i+1}H(\theta_{i},X_{i+1}),
\\
\theta_{i+1} & =&\theta_{i+1}^{\ast}\mathbb{I}\bigl
\{\theta_{i+1}^{\ast
}\in\mathcal{R}_{r_{i}}\bigr\}+
\Pi_{\mathcal{R}_{0}}\bigl(\theta_{i+1}^{\ast
}\bigr)\mathbb{I}
\bigl\{\theta_{i+1}^{\ast}\notin\mathcal{R}_{r_{i}}\bigr
\},
\\
r_{i+1} & =&r_{i} +\mathbb{I}\bigl\{\theta_{i+1}^{\ast}
\notin\mathcal{R}_{r_{i}}\bigr\} ,
\end{eqnarray*}
where $r_i$ is the indicator of the current reprojection set and
$r_0\equiv0$.
Adaptive projections can be shown to lead
to stable recursions under rather general conditions.
In the case of a Markovian noise, one usually modifies
also $X_{i+1}$ so that \cite{sa-verifiable}
\begin{eqnarray*}
X_{i+1}\mid\theta_0,X_0,\ldots,
\theta_i,X_i &\sim& P_{\theta_i}
\bigl(X_i^*, \cdot \bigr) \qquad \mbox{with}
\\
X_i^* &\hspace*{2pt}:=&\mathbb{I}\bigl\{\theta_i^*\in
\mathcal{R}_{i-1}\bigr\} X_i +\mathbb{I}\bigl\{
\theta_i^*\notin\mathcal{R}_{i-1}\bigr\} \hat{
\Pi}_{\mathsf{K}_0}(X_i) ,
\end{eqnarray*}
where $\hat{\Pi}_{\mathsf{K}_0}\dvtx \mathsf{X}\to\mathsf{K}_0$ maps
$X_i$ to a
suitable (usually compact) set $\mathsf{K}_0\subset\mathsf{X}$. This
corresponds
effectively to `restarting' the process, with a smaller step size
sequence and a bigger
feasible set $\mathcal{R}_{r_i+1}$. One can show that the projections occur
finitely often under fairly general conditions, whence the process is
eventually stable \cite{sa-verifiable}. In practice, this algorithm
may be
wasteful if $\{\mathcal{R}_i\}_{i\ge0}$ or $\mathsf{K}_0$ are
ill-defined, and
the projections occur frequently.

We focus here on the study of a different stabilising approach where
projection occurs on an expanding (with time) sequence of projection
sets $\{\mathcal{R}_{i}\}$. Our approach is similar to
Andrad\'{o}ttir's \cite{andradottir}; see also
\cite{sharia-truncated,sharia-moving-bounds}, but we consider a more
general framework
with two major differences. First, we focus on a Markovian noise setting,
and second, we allow the step size sequence,
now denoted $(\Gamma_i)_{i\ge1}$, to
be random.\footnote{The recent work of Sharia
\cite{sharia-moving-bounds} includes random step sizes as well, but
our assumptions on $\Gamma_i$ are completely different.}
Our analysis is inspired by earlier related
work in adaptive Markov chain Monte Carlo \cite{saksman-vihola}.
The generic algorithm can be given as follows.
%
\begin{algorithm}\label{alg:saep} 
Let $\{\mathcal{R}_i\}_{i\ge0}$ be subsets of $\Theta$
and let the weights $(\Gamma_i)_{i\ge1}$ be nonnegative random variables.
The stochastic approximation process
$(\theta_{i},X_{i})_{i\ge0}$ with expanding projection
sets $\{\mathcal{R}_i\}_{i\ge0}$ is defined
for any starting point $(\theta_0,X_0)\equiv(\theta,x)\in
\mathcal{R}_0\times\mathsf{X}$ and
recursively for $i\geq0$ as follows
\begin{eqnarray*}
X_{i+1}\mid\mathcal{F}_{i} & \sim &P_{\theta_{i}}(X_{i},
\cdot ),
\\
\theta_{i+1}^{\ast} & =&\theta_{i}+
\Gamma_{i+1}H(\theta_{i},X_{i+1}),
\\
\theta_{i+1} & =& \theta_{i+1}^{\ast}\mathbb{I}\bigl
\{\theta_{i+1}^{\ast}\in\mathcal {R}_{i+1}\bigr\} +
\theta_{i+1}^{\mathrm{proj}} \mathbb{I}\bigl\{\theta_{i+1}^{\ast}
\notin\mathcal{R}_{i+1}\bigr\} ,
\end{eqnarray*}
where
$\mathcal{F}_{i}$ stands for the $\sigma$-algebra generated by
$\theta_0,X_{0},\theta_1,X_1,\Gamma_1,\ldots,\theta_i,X_i,\Gamma_i$,
and where $\theta_{i+1}^{\mathrm{proj}}$ is
a $\sigma(\F_i,X_{i+1},\theta_{i+1}^*)$-measurable random variable
taking values in $\mathcal{R}_{i+1}$.
\end{algorithm}
%
Most common practical projection mechanisms include
$\theta_{i+1}^{\mathrm{proj}}:=\theta_i$ `rejecting' an update
outside the current feasible set, and $\theta_{i+1}^{\mathrm{proj}}
:=\Pi_{\mathcal{R}_{i+1}}(\theta_{i+1}^*)$, where
$\Pi_{\mathcal{R}_{i+1}}\dvtx \Theta\setminus\mathcal{R}_{i+1}\to
\mathcal{R}_{i+1}$
is a measurable mapping.

In words, the expanding projections approach only ensures that
$\theta_i$ is in a feasible set $\mathcal{R}_i$ but does not involve
potentially harmful
`restarts' as is the case with the adaptive reprojection strategy. Note
particularly
that unlike with the adaptive reprojections strategy,
we need not project $X_{i+1}$ at all.
We believe that these advantages can provide significantly
better results in certain settings, but this is at the expense of
requiring more when proving the stability and the convergence of the
process. In short, we must be able to control certain quantitative
criteria within each feasible set $\mathcal{R}_i$.
The random step size sequence allows one to consider
situations where the family of Markov kernels
$\{P_{\theta}\}_{\theta\in\Theta}$ is not necessarily smooth in a
manner that is usually considered in the stochastic approximation
literature (e.g., \cite{benveniste-metivier-priouret}).

Other stabilisation techniques in the literature related to our
approach include the state-dependent
averaging framework of Younes \cite{younes} and
a state-dependent step size sequence of Kamal \cite{kamal}.
Particularly the former shares similarities with
the present work, as it also relies on quantifying the ergodicity rates of
Markov kernels explicitly. Our stabilisation approach differs, however,
crucially from these methods, adding only the projections to the basic
Robbins--Monro algorithm. We remark also that our present approach
may be used in some situations to prove the stability and convergence of
an \emph{unmodified} Robbins--Monro stochastic approximation.
This is possible, loosely speaking,
if one can show that projections do not occur at all with a positive
probability; see \cite{saksman-vihola} for an example of such a
situation. We point out also the work
\cite{andrieu-tadic-vihola}
suggesting a generic method to establish the stability of
unmodified Markovian Robbins--Monro stochastic approximation
at the expense of more stringent assumptions.

Our main results show that the SA process $(\theta_{i})_{i\ge0}$
produced by our expanding projections algorithm `stays away from
$\partial\Theta$' almost surely for any starting point
$(\theta,x)\in\mathcal{R}_0\times\mathsf{X}$ under conditions on
$H(\cdot,\cdot)$, $\{P_{\theta}\}_{\theta\in\Theta}$,
$(\mathcal{R}_{i})_{i\ge0}$ and $(\Gamma_i)_{i\ge1}$. Figure~\ref
{fig:navigation}
summarises the inter-dependency between our various main conditions and
results and in order to help the reader we provide a nomenclature of
some of the constants involved in Appendix~\ref{sec:nom}.

Section~\ref{sec:stability} contains two fundamental results, Theorems
\ref{th:unbounded-w-stability} and \ref{th:bounded-w-stability},
which both establish stability of Algorithm \ref{alg:saep} under
abstract noise conditions and the existence of a Lyapunov function
satisfying two distinct sets of assumptions which, roughly speaking,
allow us to tackle instability at infinity or at a finite point.
Section~\ref{sec:noise} focuses on establishing the required
noise conditions with verifiable assumptions on the Markov kernels.
First, Theorem~\ref{thm:general-noise-theorem} establishes the
aforementioned noise conditions under Condition \ref
{cond:general-noise-theorem-cond}, which essentially involves a
trade-off between the sequences $(\Gamma_i)_{i \geq0}$ and $(\xi_i)_{i \geq0}$ and properties of the solution of the Poisson equation
related to $\{P_\theta\}_{\theta\in\Theta}$ and $H(\cdot,\cdot)$.
Second, essentially assuming geometric ergodicity, Propositions \ref
{prop:final-prop-cont} and \ref{prop:nonsmooth-noise} establish the
required conditions in the scenarios where $\{P_\theta\}_{\theta\in
\Theta}$ depends smoothly on $\theta$ and where it does not
respectively---the latter case requires the introduction of random
step-sizes $(\Gamma_i)_{i \geq0}$ (see also the comments in the
introduction of Section~\ref{sec:random-step-size}).
%
\begin{figure}

\includegraphics{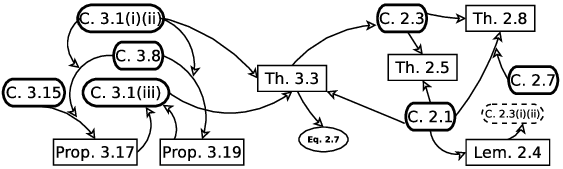}

\caption{Road map of the main results and assumptions.}
\label{fig:navigation}
\end{figure}

We complement our stability results in Section~\ref{sec:convergence}
with a discussion on how one can use existing results in the literature
to obtain convergence of $(\theta_i)_{i\ge0}$ to a zero of $h$.
Finally, we apply our theory to a new stochastic approximation
expectation maximisation algorithm involving particle independent
Metropolis--Hastings sampling in Section~\ref{sec:application}.


\section{General stability results}\label{sec:stability} 


We denote throughout the article
the probability distribution associated to the process
$(\theta_i,X_i)_{i\ge0}$ defined in Algorithm
\ref{alg:saep} and starting at
$(\theta_0,X_0)\equiv(\theta,x)\in\Theta\times\mathsf{X}$
as $\P_{\theta,x}( \cdot )$
and the associated expectation as $\E_{\theta,x}[ \cdot ]$.
For any subset $A\subset E$
of some space $E$, we denote $A^{c}$ its complement in $E$.
We also denote $ \langle \cdot , \cdot  \rangle$
the standard inner product and $| \cdot |$ the associated norm on
$\Theta\subset\R^d$.
We also use the notation $a\vee b:=\max\{a,b\}$ and $a\wedge
b :=\min\{a,b\}$.

The approach we develop relies on the existence of a Lyapunov function
$w\dvtx \Theta\rightarrow[0,\infty)$ for the recursion on $\theta$ and the
subsequent proof that $\{w(\theta_{i})\}$ is $\P_{\theta,x}$-a.s.
under some adequate level.
For any $M>0$, we define the level sets
$\mathcal{W}_{M}:=\{\theta\in\Theta\dvt w(\theta)\leq M\}$.
Our general stability results are inspired by a proof
due to Benveniste, Metivier and Priouret \cite{benveniste-metivier-priouret}, Theorem~17, page 239, but differ in many respects
as we shall see.

We consider two different settings concerning the way $w$ behaves on
the boundary
$\partial\Theta$ of $\Theta$. Section~\ref{sec:unbounded-lyapunov}
assumes that $\lim_{\theta\rightarrow\partial\Theta}w(\theta
)=\infty$,
which is well suited for example to the case $\Theta=\R$ and
$\partial\Theta=\{-\infty,\infty\}$. Section~\ref{sec:bounded-lyapunov} considers the case where $w$ may not be
unbounded, which requires stronger assumptions on the behaviour of $w$.
This setting subsumes for example the case where $\Theta\subset\R$ and
$\partial\Theta$ contains some points on the real line. Both of the
scenarios share the following set of assumptions.
%
\begin{condition}\label{cond:w} 
There exists a twice continuously differentiable
function $w\dvtx \Theta\rightarrow[0,\infty)$ such that
\renewcommand\theenumi{(\roman{enumi})}
\renewcommand\labelenumi{\theenumi}
\begin{enumerate}[(iii)]
\item\label{item:w-hessian} the Hessian matrix
$\Hess_w\dvtx \Theta\to\R^{d\times d}$ of $w$ is bounded so that
\begin{eqnarray*}
C_{w}:=\sup_{\theta\in\Theta} \sup_{|\theta_0|=1} \bigl|
\Hess_w(\theta)\theta_0\bigr|<\infty,
\end{eqnarray*}
\item\label{item:projection-sets}
the projection sets are increasing subsets of $\Theta$, that is,
$\mathcal{R}_{i}\subset\mathcal{R}_{i+1}$
for all $i\geq0$, and $\hat{\Theta}:=\bigcup_{i=0}^\infty
\mathcal{R}_i \subset\Theta$,
\item\label{item:w-drift}
there exists a constant $M_{0}>0$ such that for any
$\theta\in\mathcal{W}_{M_{0}}^{c}\cap\hat{\Theta}$
\[
\bigl\langle\nabla w(\theta),h(\theta) \bigr\rangle\leq0 ,
\]
\item\label{item:proj} the family of random variables
$\{\theta_{i}^{\mathrm{proj}}\}_{i\ge1}$
satisfies for all $i\geq1$ whenever
$\theta_{i}^{\ast}\notin\mathcal{R}_{i}$
\begin{eqnarray*}
\theta_{i}^{\mathrm{proj}} \in\mathcal{R}_i \quad \mbox{and}\quad  w
\bigl(\theta_{i}^{\mathrm{proj}}\bigr)\le w\bigl(\theta^{\ast}_i
\bigr)\qquad  \P_{\theta,x}\mbox{-a.s.} ,
\end{eqnarray*}
\item\label{item:xi}
there exists constants $\alpha_w,c\in[0,\infty)$ and
a non-decreasing sequence of constants $\xi_i\in[1,\infty)$ satisfying
$\sup_{\theta\in\mathcal{R}_{i}}|\nabla w(\theta)|\le c\xi_i^{\alpha_w}$
for all $i\ge0$.
\end{enumerate}
%
\end{condition}
%
\begin{remark} 
Condition \ref{cond:w}
\begin{enumerate}[(iii)]
\item[\ref{item:w-hessian}]
Can often be established by introducing a Lyapunov function
defined through $w:=\psi\circ\tilde{w}$, where
$\psi\dvtx [0,\infty)\to[0,\infty)$ is a suitable concave function
modifying the values of another Lyapunov function
$\tilde{w}$ which satisfies the drift condition
\ref{item:w-drift} but does not have finite second
derivatives; see \cite{benveniste-metivier-priouret}, Remark on page 239.
\item[\ref{item:projection-sets}]
Is often satisfied with $\hat{\Theta}=\Theta$, but
accomodates also projections sets which do not cover $\Theta$, but
only certain admissible values
$\hat{\Theta}\subsetneq\Theta$.
As an extreme case, this allows to use the present framework
to check that a
fixed projection does not induce spurious attractors on the boundary
of $\hat{\Theta}$. Notice also that the function $H(\theta,x)$ and the
corresponding mean field $h(\theta)$ need only be defined for values
$\theta\in\hat{\Theta}$.
\item[\ref{item:w-drift}]
Will be replaced with a stricter drift in Theorem~\ref{th:bounded-w-stability}, where $w$ is not required to diverge
on the boundary $\partial\hat{\Theta}$.
\item[\ref{item:proj}]
Is satisfied trivially by
the choices $\theta_{i}^{\mathrm{proj}}
:=\theta_{i-1}$ and
$\theta_{i}^{\mathrm{proj}}:=
\Pi_{\mathcal{R}_i}(\theta_i^*)$,
if the projection sets
are defined as the level sets of the Lyapunov function, that is
$\mathcal{R}_i:=\mathcal{W}_{M_i}$ for some $M_i>0$.
In the Markovian case, the projections are assumed to satisfy
an additional continuity condition; see Theorem~\ref
{thm:general-noise-theorem}.
\item[\ref{item:xi}] Involves in practice
a sequence that grows at most at a rate $\xi_i :=i\vee1$, with
some power $\alpha_w\in[0,1)$. The sequence $\xi_i$ plays a central
role also in controlling the ergodicity rate of the Markov chain in
$\mathcal{R}_i$; see Remark~\ref{rem:discussion-of-constants}.
\end{enumerate}
\end{remark}
%

Hereafter, we denote the `centred' version of $H$ as
$\bar{H}(\theta,x):=H(\theta,x)-h(\theta)$. For the stability
results, we shall introduce the following general condition on the noise
sequence. In general terms, it is
related to the rate at which $\{\theta_{i}\}$
may approach $\partial\hat{\Theta}$ in relation to the growth of
$|H(\theta,x)|$
and the loss of ergodicity of $\{P_{\theta}\}$. Establishing practical
and realistic conditions under which this assumption holds will be
the topic of Section~\ref{sec:noise}.
%
\begin{condition}\label{cond:abstract-growth} 
For any
$(\theta,x)\in\mathcal{R}_0\times\mathsf{X}$ it holds that
\renewcommand\theenumi{(\roman{enumi})}
\renewcommand\labelenumi{\theenumi}
\begin{enumerate}[(iii)]
\item\label{eq:incr-vanish}\hspace*{4pt} $\P_{\theta,x} \Bigl( \displaystyle \lim_{i\to\infty} \Gamma_{i+1} \bigl|\nabla w(
\theta_i)\bigr| \cdot\bigl| H(\theta_i,X_{i+1})\bigr| = 0
\Bigr)=1$,\vspace*{3.5pt}

\item\label{eq:square-summable}\hspace*{4pt} $\E_{\theta,x} \Biggl[{\displaystyle \sum_{i=0}^\infty}
\Gamma_{i+1}^{2}\bigl|H(\theta_{i},X_{i+1})\bigr|^{2}
\Biggr] <\infty$,\vspace*{3.5pt}

\item\label{eq:noise-summable}\hspace*{4pt} $\E_{\theta,x} \Biggl[\displaystyle \sup_{k\geq0} \Biggl\llvert \sum
_{i=0}^{k}\Gamma_{i+1} \bigl\langle\nabla
w(\theta_{i}),\bar{H}(\theta_{i},X_{i+1}) \bigr
\rangle\Biggr\rrvert \Biggr] <\infty$.
\end{enumerate}
%
\end{condition}
In what follows, we shall focus on
a single condition implying Condition \ref{cond:abstract-growth}\ref{eq:incr-vanish} and \ref{eq:square-summable}.
It is slightly more stringent, but more convenient to
check in practice.
%
\begin{lemma} 
Suppose Condition \ref{cond:w} holds and
%
\begin{equation}\label{eq:square-summable-alt}
\E_{\theta,x} \Biggl[{\sum_{i=0}^\infty}
\Gamma_{i+1}^{2}\xi_i^{2\alpha_w} \bigl|H(
\theta_{i},X_{i+1})\bigr|^{2} \Biggr] <\infty.
\end{equation}
Then, Condition \ref{cond:abstract-growth}\textup{\ref{eq:incr-vanish}} and
\textup{\ref{eq:square-summable}} hold.
\end{lemma}
%
%
\begin{pf} 
Note first that Condition \ref{cond:abstract-growth}\ref{eq:square-summable} holds trivially, because
$\xi_i^{2\alpha_w}\ge1$.
For Condition \ref{cond:abstract-growth}\ref{eq:incr-vanish}, consider
\[
\E_{\theta,x} \Biggl[{ \sum_{i=0}^\infty}
\bigl( \Gamma_{i+1}\bigl|\nabla w(\theta_i)\bigr|\cdot\bigl|H(
\theta_i,X_{i+1})\bigr| \bigr)^2 \Biggr] \le
c^2 \E_{\theta,x} \Biggl[{ \sum_{i=0}^\infty}
\Gamma_{i+1}^2\xi_{i}^{2\alpha_w} \bigl|H(
\theta_i,X_{i+1})\bigr|^2 \Biggr] .
\]\upqed
\end{pf}
%


\subsection{Unbounded Lyapunov function}\label{sec:unbounded-lyapunov} 

When $\lim_{\theta\to\partial\hat{\Theta}} w(\theta)=\infty$,
it is enough to show that the sequence $w(\theta_i)$ is bounded in
order to ensure the stability of $\theta_i$.
%
\begin{theorem}\label{th:unbounded-w-stability} 
Assume Conditions \ref{cond:w} and
\ref{cond:abstract-growth} hold.
Then, for any $(\theta,x)\in\mathcal{R}_0\times\mathsf{X}$
\[
\P_{\theta,x} \Bigl(\limsup_{i\to\infty} w(\theta_i) <
\infty \Bigr)=1 .
\]
\end{theorem}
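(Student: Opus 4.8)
The plan is to run a Lyapunov argument on $w(\theta_i)$ built from a second-order Taylor expansion, combined with an excursion decomposition that isolates the stretches on which the drift condition is active. First I would derive a one-step inequality. Since $w$ is $C^2$ with Hessian bounded by $C_w$ (Condition~\ref{cond:w}\eqref{item:w-hessian}), Taylor's theorem applied along the segment $[\theta_i,\theta_{i+1}^*]$ gives
\[
w(\theta_{i+1}^*) \le w(\theta_i) + \Gamma_{i+1}\langle \nabla w(\theta_i), H(\theta_i,X_{i+1})\rangle + \frac{C_w}{2}\Gamma_{i+1}^2|H(\theta_i,X_{i+1})|^2 .
\]
Because the projection never increases $w$ (Condition~\ref{cond:w}\eqref{item:proj}), we have $w(\theta_{i+1})\le w(\theta_{i+1}^*)$ whether or not $\theta_{i+1}^*\in\mathcal{R}_{i+1}$, so the same bound holds with $\theta_{i+1}$ on the left. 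Writing $H=h+\bar{H}$ this reads
\[
w(\theta_{i+1}) \le w(\theta_i) + \Gamma_{i+1}\langle \nabla w(\theta_i), h(\theta_i)\rangle + (e_{i+1}-e_i) + (s_{i+1}-s_i),
\]
where $e_{k}\defeq \sum_{i=0}^{k-1}\Gamma_{i+1}\langle \nabla w(\theta_i),\bar{H}(\theta_i,X_{i+1})\rangle$ and $s_k \defeq \frac{C_w}{2}\sum_{i=0}^{k-1}\Gamma_{i+1}^2|H(\theta_i,X_{i+1})|^2$. The crucial observation is that every iterate lies in $\hat{\Theta}$, since the projection always returns a point of $\mathcal{R}_{i+1}\subset\hat{\Theta}$ and $\theta_0\in\mathcal{R}_0$; hence the drift term $\Gamma_{i+1}\langle \nabla w(\theta_i), h(\theta_i)\rangle$ is $\le 0$ at every index $i$ with $w(\theta_i)>M_0$, by Condition~\ref{cond:w}\eqref{item:w-drift}.

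Next I would pass to a full-probability event on which the three quantities supplied by Condition~\ref{cond:abstract-growth} are simultaneously tame. From \eqref{eq:square-summable} the series defining $s_\infty$ converges a.s.; from \eqref{eq:noise-summable} the partial sums are uniformly bounded, $E\defeq\sup_{k\ge0}|e_k|<\infty$ a.s.; and from \eqref{eq:incr-vanish} together with the fact that the summable terms of $s_\infty$ tend to zero, the one-step increments $\delta_i\defeq \Gamma_{i+1}|\nabla w(\theta_i)|\,|H(\theta_i,X_{i+1})| + \frac{C_w}{2}\Gamma_{i+1}^2|H(\theta_i,X_{i+1})|^2$ vanish and hence satisfy $D\defeq\sup_{i\ge0}\delta_i<\infty$ a.s. By Cauchy--Schwarz, the one-step inequality also yields the crude bound $w(\theta_{i+1})\le w(\theta_i)+\delta_i$ for every $i$.

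Finally I would run the excursion argument pathwise on this event. Fix $i$ and let $\ell$ be the largest index $\le i$ with $w(\theta_\ell)\le M_0$, setting $\ell=-1$ if none exists; if $\ell=i$ then $w(\theta_i)\le M_0$ and we are done. Otherwise, on the stretch $\ell<j\le i$ every iterate has $w(\theta_j)>M_0$, so the drift term drops out and summing the one-step inequality telescopes to
\[
w(\theta_i)\le w(\theta_{\ell+1}) + (e_i-e_{\ell+1}) + (s_i-s_{\ell+1}) \le w(\theta_{\ell+1}) + 2E + s_\infty .
\]
The entry value is controlled by a single step, $w(\theta_{\ell+1})\le w(\theta_\ell)+\delta_\ell \le M_0 + D$ when $\ell\ge0$, and $w(\theta_{\ell+1})=w(\theta_0)$ when $\ell=-1$. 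This gives the uniform bound $w(\theta_i)\le \max\{w(\theta_0),M_0\} + D + 2E + s_\infty$ for every $i$, whence $\sup_i w(\theta_i)<\infty$ and in particular $\limsup_i w(\theta_i)<\infty$, $\P_{\theta,x}$-a.s.

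I expect the main obstacle to be precisely the control of the \emph{entry} step into $\mathcal{W}_{M_0}^c$: on that single step the drift offers no help, so without an a priori bound on the increment the value $w(\theta_{\ell+1})$ could be arbitrarily large. This is exactly what Condition~\ref{cond:abstract-growth}\eqref{eq:incr-vanish} rules out, and it is the reason a plain supermartingale or Robbins--Siegmund argument does not apply directly. Relatedly, since excursions above $M_0$ can begin at arbitrarily late times, the noise must be controlled through the \emph{supremum} of the partial sums as in \eqref{eq:noise-summable}, rather than through mere convergence, so as to furnish the bound $|e_i-e_{\ell+1}|\le 2E$ uniformly in $\ell$.
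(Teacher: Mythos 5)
Your proof is correct, and it rests on the same core mechanism as the paper's: decompose the trajectory into excursions above the level set $\mathcal{W}_{M_0}$, use Condition \ref{cond:w} \eqref{item:w-drift} to discard the drift term along each excursion, control the accumulated noise via the supremum in Condition \ref{cond:abstract-growth} \eqref{eq:noise-summable} and the summability in \eqref{eq:square-summable}, and control the single entry jump into $\mathcal{W}_{M_0}^c$ via \eqref{eq:incr-vanish}. The execution, however, is genuinely different. The paper introduces a sequence of levels $M_n\to\infty$, the exit times $\sigma_n$ from $\mathcal{W}_{M_n}$ and the post-last-visit times $\tau_n$, shows $\P_{\theta,x}(D_n)\to 1$ for the entry-jump events $D_n$ via Lemma \ref{lem:D_n_has_prob_1}, and then kills $\P_{\theta,x}(D_n\cap\{\sigma_n<\infty\})$ with a Markov-type inequality applied to $\E_{\theta,x}\big[\charfun{\sigma_n<\infty}\,(w(\theta_{\sigma_n})-w(\theta_{\tau_n}))\big]$. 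You instead pass once and for all to the full-measure event on which $s_\infty$, $E$ and $D$ are finite and run the excursion argument deterministically on each path, arriving at the explicit envelope $\sup_i w(\theta_i)\le \max\{w(\theta_0),M_0\}+D+2E+s_\infty<\infty$. Your route is more elementary — no stopping times, no auxiliary level sequence, no Markov inequality — and it yields a marginally more informative conclusion (an explicit almost surely finite bound rather than $\lim_n\P_{\theta,x}(\sigma_n<\infty)=0$). Your closing remark correctly isolates why \eqref{eq:incr-vanish} and the supremum form of \eqref{eq:noise-summable} are both indispensable; these are precisely the roles they play in the paper's Lemma \ref{lem:D_n_has_prob_1} and in the bound \eqref{eq:w-exp-bound}, respectively.
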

%
%
\begin{pf} 
To show the $\P_{\theta,x}$-a.s. boundedness of
$\{w(\theta_{i})\}$ we fix
$(\theta,x)\in\mathcal{R}_0\times\mathsf{X}$ and
introduce the following quantities. Let
$M_0<M_1<\cdots<M_n\to\infty$
be an increasing sequence tending to infinity
and consider the level sets
$\mathcal{W}_{M_{i}}\subset\Theta$.
We assume that
$M_0$ is chosen large enough so that
$\theta_{0}=\theta\in\mathcal{W}_{M_{0}}$. For
any $n\geq0$, we define the first exit time of $\theta_i$
from the level set $\mathcal{W}_{M_{n}}$ as
\[
\sigma_{n}:=\inf\{i\ge0 \dvt  \theta_{i}\notin\mathcal
{W}_{M_{n}}\} ,
\]
with the usual convention that $\inf\{\emptyset\}=\infty$. For
any $n\geq0$, we define the time following the last exit of $\theta_i$
from $\mathcal{W}_{M_{0}}$
before $\sigma_{n}$ as
\[
\tau_{n}:= 1+\sup\{i\le\sigma_n \dvt  \theta_{i}
\in\mathcal{W}_{M_{0}}\} ,
\]
which is finite at least whenever $\sigma_n$ is finite by our
assumption that
$\theta_{0}\in\mathcal{W}_{M_{0}}$.
With these definitions, the claim holds once we show that
$\lim_{n\to\infty} \P_{\theta,x}(\sigma_n<\infty)=0$.

To begin with, define for $n\ge1$
the following sets characterising the jumps out of
$\mathcal{W}_{M_0}$
\begin{eqnarray*}
D_{n}:= \biggl\{ \mathbb{I}\{\tau_n<\infty\} \bigl[w(
\theta_{\tau_{n}})-w(\theta_{\tau
_{n}-1}) \bigr] \leq\frac{M_{n}-M_{0}}{2}
\biggr\} .
\end{eqnarray*}
We first show that $\lim_{n\to\infty}\P_{\theta,x}(D_n)=1$. Clearly
%
\begin{equation}\label{eq:trivial-superset}
\tilde{D}_n := \biggl\{ \sup_{i\geq0} \bigl[w(
\theta_{i+1})-w(\theta_{i}) \bigr]\le\frac
{M_{n}-M_{0}}{2} \biggr
\} \subset D_n
\end{equation}
and since $M_n\to\infty$, one has
$ \{ \sup_{i\geq0} [w(\theta_{i+1})-w(\theta_{i}) ]
<\infty \} = \bigcup_{n=1}^\infty\tilde{D}_n$.
Lemma~\ref{lem:D_n_has_prob_1} shows that
$1 = \P_{\theta,x}(\bigcup_{n=1}^\infty\tilde{D}_{n})
= \lim_{n\rightarrow\infty}\P_{\theta,x}(\tilde{D}_n)
\le\lim_{n\rightarrow\infty}\P_{\theta,x}(D_n)$ because
$\tilde{D}_n$ is an increasing sequence and by
\eqref{eq:trivial-superset}, respectively.

Now, it remains to focus on proving that
\[
\lim_{n\rightarrow\infty} \P_{\theta,x} \bigl(D_{n}\cap\{
\sigma_{n}<\infty\} \bigr)=0 .
\]
In order to achieve this observe first that
$w(\theta_{\sigma_{n}})-w(\theta_{\tau_{n}-1})\geq M_{n}-M_{0}$
on $\{\sigma_{n}<\infty\},$ implying that
on $D_{n}\cap\{\sigma_{n}<\infty\}$,
\[
w(\theta_{\sigma_{n}})-w(\theta_{\tau_{n}}) =w(\theta_{\sigma_{n}})-w(
\theta_{\tau_{n}-1}) - \bigl[w(\theta_{\tau_{n}})-w(\theta_{\tau_{n}-1})
\bigr] \geq\frac{M_{n}-M_{0}}{2} .
\]
This allows us to deduce the following bound
\begin{eqnarray*}
\P_{\theta,x}\bigl(D_{n}\cap\{\sigma_{n} <\infty\}
\bigr)& =&\E_{\theta,x} \bigl[\mathbb{I}\bigl\{D_{n}\cap\{
\sigma_{n}<\infty\}\bigr\} \bigr]
\\
& \leq&\E_{\theta,x} \biggl[\mathbb{I}\bigl\{D_{n}\cap\{
\sigma_{n}<\infty \}\bigr\}\frac{w(\theta_{\sigma_{n}})-w(\theta_{\tau_{n}})}{
({1}/{2})(M_{n}-M_{0})} \biggr]
\\
& \le&\frac{2}{M_{n}-M_{0}} \E_{\theta,x} \bigl[ \mathbb{I}\{
\sigma_{n}<\infty\} \bigl[w(\theta_{\sigma_{n}})-w(
\theta_{\tau_{n}}) \bigr] \bigr] .
\end{eqnarray*}
Since $M_n\to\infty$,
the proof will be finished once we show that
%
\begin{equation}\label{eq:w-exp-bound}
\sup_{n\geq0} \E_{\theta,x} \bigl[ \mathbb{I}\{\sigma_{n}<
\infty\} \bigl[w(\theta_{\sigma_{n}})-w(\theta_{\tau_{n}})\bigr] \bigr]<
\infty.
\end{equation}

Thanks to Condition \ref{cond:w}\ref{item:proj},
we have for any $i\ge0$
that $w(\theta_{i+1})\le w(\theta_{i+1}^*)$ and consequently
\begin{eqnarray*}
w(\theta_{i+1})-w(\theta_i) &\le& \Gamma_{i+1}
\bigl\langle\nabla w(\theta_{i}),h(\theta_{i}) \bigr\rangle
\\
&&{}+\Gamma_{i+1} \bigl\langle\nabla w(\theta_{i}),\bar{H}(
\theta_{i},X_{i+1}) \bigr\rangle +\Gamma_{i+1}^2
\frac{C_{w}}{2}\bigl|H(\theta_{i},X_{i+1})\bigr|^{2} .
\end{eqnarray*}
So in particular, since
$\langle\nabla w(\theta_i), h(\theta_i) \rangle\le0$
whenever $\theta_i\in\mathcal{W}_{M_0}^c$,\vspace*{-1pt}
\begin{eqnarray*}
&&\mathbb{I}\{\sigma_{n}<\infty\} \bigl[w(\theta_{\sigma_{n}})-w(
\theta_{\tau_{n}})\bigr]\\
&&\quad  = \mathbb{I}\{\sigma_{n} <\infty\}{\sum
_{i=\tau_{n}}^{\sigma_{n}-1}} \bigl[w(\theta_{i+1})-w(
\theta_{i})\bigr]
\\
&&\quad \leq\mathbb{I}\{\sigma_{n}<\infty\} \Biggl( { \sum
_{i=\tau_{n}}^{\sigma_{n}-1}}\Gamma_{i+1} \bigl\langle\nabla
w(\theta_{i}),\bar{H}(\theta_{i},X_{i+1}) \bigr
\rangle +\Gamma_{i+1}^2\frac{C_{w}}{2}\bigl|H(
\theta_{i},X_{i+1})\bigr|^{2} \Biggr) .
\end{eqnarray*}
Recall the following estimate for partial sums\vspace*{-1pt}
%
\begin{equation}\label{eq:partial-sums}
\Biggl\llvert \sum_{i=j}^k
a_i\Biggr\rrvert = \Biggl\llvert \sum_{i=0}^k
a_i - \sum_{i=0}^{j-1}
a_i\Biggr\rrvert \le \Biggl\llvert \sum
_{i=0}^k a_i\Biggr\rrvert + \Biggl
\llvert \sum_{i=0}^{j-1} a_i
\Biggr\rrvert \le2 \sup_{k\ge0} \Biggl\llvert \sum
_{i=0}^k a_i \Biggr\rrvert ,
\end{equation}
implying in our case that\vspace*{-1pt}
\begin{eqnarray*}
&&\frac{1}{2} \mathbb{I}\{\sigma_{n}<\infty\} \bigl[w(
\theta_{\sigma_n})-w(\theta_{\tau_n})\bigr]
\\
&&\quad \le\mathbb{I}\{\sigma_{n}<\infty\} \Biggl( \sup_{k\ge0}
\Biggl\llvert \sum_{i=0}^k
\Gamma_{i+1} \bigl\langle\nabla w(\theta_{i}),\bar{H}(
\theta_{i},X_{i+1}) \bigr\rangle \Biggr\rrvert + \sum
_{i=0}^\infty\Gamma_{i+1}^2
\frac{C_{w}}{2}\bigl|H(\theta_{i},X_{i+1})\bigr|^{2}
\Biggr) .
\end{eqnarray*}
Now, Condition \ref{cond:abstract-growth}\ref{eq:square-summable} and
\ref{eq:noise-summable}
imply
\eqref{eq:w-exp-bound} allowing us to conclude.
\end{pf}
%
%
\begin{lemma}\label{lem:D_n_has_prob_1} 
Under Condition \ref{cond:abstract-growth} we have,
$\P_{\theta,x}$-almost surely\vspace*{-1pt}
%
\begin{eqnarray}
\label{eq:jumps-vanish}\limsup_{i\to\infty} \bigl[w(\theta_{i+1})-w(\theta_i)
\bigr] &\le&0,
\\
\label{eq:jumps-finite}\sup_{i\geq0} \bigl[w(\theta_{i+1})-w(\theta_{i})
\bigr] &<&\infty .
\end{eqnarray}
\end{lemma}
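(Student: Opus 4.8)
The plan is to reduce both assertions to a single one-step upper bound on the increment $w(\theta_{i+1})-w(\theta_i)$ and then to read off the conclusions directly from Condition \ref{cond:abstract-growth} \eqref{eq:incr-vanish} and \eqref{eq:square-summable}. First I would derive, exactly as in the proof of Theorem \ref{th:unbounded-w-stability}, the pointwise estimate
\begin{equation*}
w(\theta_{i+1})-w(\theta_i)\le \Gamma_{i+1}|\nabla w(\theta_i)|\,|H(\theta_i,X_{i+1})| +\frac{C_w}{2}\Gamma_{i+1}^2|H(\theta_i,X_{i+1})|^2,
\end{equation*}
valid for every $i\ge 0$. This rests on two ingredients: the projection inequality $w(\theta_{i+1})\le w(\theta_{i+1}^*)$ furnished by Condition \ref{cond:w} \eqref{item:proj} (an equality when no projection occurs), and a second-order Taylor expansion of $w$ between $\theta_i$ and $\theta_{i+1}^*=\theta_i+\Gamma_{i+1}H(\theta_i,X_{i+1})$, whose quadratic remainder is controlled by the uniform Hessian bound $C_w$ of Condition \ref{cond:w} \eqref{item:w-hessian} together with $|\theta_{i+1}^*-\theta_i|=\Gamma_{i+1}|H(\theta_i,X_{i+1})|$; the linear term is then bounded by Cauchy--Schwarz.

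Second I would show that each of the two terms on the right-hand side vanishes $\P_{\theta,x}$-almost surely as $i\to\infty$. The first term tends to zero by Condition \ref{cond:abstract-growth} \eqref{eq:incr-vanish}. For the second term, Condition \ref{cond:abstract-growth} \eqref{eq:square-summable} gives $\E_{\theta,x}[\sum_{i\ge 0}\Gamma_{i+1}^2|H(\theta_i,X_{i+1})|^2]<\infty$, so the series $\sum_{i\ge 0}\Gamma_{i+1}^2|H(\theta_i,X_{i+1})|^2$ is finite $\P_{\theta,x}$-a.s.\ (a non-negative random variable with finite mean is a.s.\ finite), whence its general term $\Gamma_{i+1}^2|H(\theta_i,X_{i+1})|^2\to 0$ a.s. Taking $\limsup_{i\to\infty}$ in the increment bound then yields \eqref{eq:jumps-vanish}. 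For \eqref{eq:jumps-finite}, I would simply observe that both sequences on the right-hand side, being convergent, are $\P_{\theta,x}$-a.s.\ bounded in $i$, so the finiteness of their suprema bounds $\sup_{i\ge 0}[w(\theta_{i+1})-w(\theta_{i})]$ from above.

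Notably this argument uses neither the drift condition \ref{cond:w} \eqref{item:w-drift} nor the decomposition $H=h+\bar H$ underlying Condition \ref{cond:abstract-growth} \eqref{eq:noise-summable}: the crude estimate $\langle\nabla w(\theta_i),H(\theta_i,X_{i+1})\rangle\le|\nabla w(\theta_i)|\,|H(\theta_i,X_{i+1})|$ suffices because we only seek an asymptotic upper bound on the increments, not a two-sided control of their partial sums. The only point requiring mild care is the Taylor expansion, where one must ensure the segment joining $\theta_i$ to $\theta_{i+1}^*$ lies in the domain on which the Hessian bound holds; this is the same technical step already used, without comment, in the increment estimate within the proof of Theorem \ref{th:unbounded-w-stability}, so I expect no genuine obstacle here.
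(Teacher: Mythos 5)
Your proof is correct and follows essentially the same route as the paper: a second-order Taylor expansion of $w$ between $\theta_i$ and $\theta_{i+1}^*$ combined with the projection inequality of Condition \ref{cond:w} \eqref{item:proj}, with the two resulting terms sent to zero by Condition \ref{cond:abstract-growth} \eqref{eq:incr-vanish} and by the a.s.\ summability implied by \eqref{eq:square-summable}, respectively. The only cosmetic difference is that the paper first shows $|w(\theta_{i+1}^*)-w(\theta_i)|\to 0$ and then invokes the projection inequality, whereas you fold the two steps into a single upper bound on $w(\theta_{i+1})-w(\theta_i)$.
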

%
%
\begin{pf} 
We first prove that $
\lim_{i\rightarrow\infty}|w(\theta_{i+1}^{\ast})-w(\theta_{i})|=0$,
$\P_{\theta,x}$\mbox{-a.s.}
By a Taylor expansion, we get\vspace*{-1pt}
\begin{eqnarray*}
\bigl|w\bigl(\theta_{i+1}^{\ast}\bigr)-w(\theta_{i})\bigr|
\le \bigl|\nabla w(\theta_i)\bigr| \cdot\bigl|\Gamma_{i+1} H(
\theta_i,X_{i+1})\bigr| + \Gamma_{i+1}^2
C_w \bigl|H(\theta_i, X_{i+1})\bigr|^2.
\end{eqnarray*}
The terms on the right converge to zero $\P_{\theta,x}$-a.s. by
Condition \ref{cond:abstract-growth}\ref{eq:incr-vanish}
and \ref{eq:square-summable}, respectively.
Now, \eqref{eq:jumps-vanish}
follows since by Condition \ref{cond:w}\ref{item:proj}
$w(\theta_{i+1})-w(\theta_i) \le w(\theta_{i+1}^*) - w(\theta_i)$.
We conclude by noting that \eqref{eq:jumps-finite} follows directly from
\eqref{eq:jumps-vanish}.
\end{pf}
%


\subsection{Bounded Lyapunov function}\label{sec:bounded-lyapunov} 

In the previous section, the Lyapunov function satisfied
$\lim_{\theta\to\partial\hat{\Theta}}w(\theta)=\infty$. If this
is not the
case, we need to replace Condition
\ref{cond:w}\ref{item:w-drift} with a more stringent condition
quantifying the drift outside $\mathcal{W}_{M_0}$, while not requiring
$\lim_{\theta\to\partial\hat{\Theta}} w(\theta) = \infty$.
%
\begin{condition}\label{cond:stringent-drift} 
The Lyapunov function and the step size sequence satisfy
%
\begin{eqnarray*}
\delta_{i}:=\inf_{\theta\in\mathcal{R}_{i}\setminus
\mathcal{W}_{M_{0}}} - \bigl\langle\nabla w(\theta),h(
\theta) \bigr\rangle >0 \quad \mbox{and}\quad  \sum_{i=1}^{\infty}
\Gamma_{i}\delta_{i}=\infty \qquad \P_{\theta,x}\mbox{-almost
surely}.
\end{eqnarray*}
\end{condition}
%
%
\begin{theorem}\label{th:bounded-w-stability} 
Assume Conditions \ref{cond:w}, \ref{cond:abstract-growth}
and \ref{cond:stringent-drift} hold,
and in addition that the following condition on the noise holds
%
\begin{equation}\label{eq:noise-negligible}
\lim_{m\to\infty} \sup_{k>m} \Biggl\llvert \sum
_{i=m}^{k}\Gamma_{i+1} \bigl\langle\nabla
w(\theta_{i}),\bar{H}(\theta_{i},X_{i+1}) \bigr
\rangle\Biggr\rrvert = 0 .
\end{equation}
Then for any $M>M_0$, the tails of the
trajectories of $\{\theta_i\}$ are eventually contained within
$\mathcal{W}_M$
$\P_{\theta,x}$-a.s., that is,
\[
\P_{\theta,x}\biggl(\bigcup_{m\geq0}\bigcap
_{n\geq m} \{\theta_{n}\in
\mathcal{W}_{M}\}\biggr)=1 .
\]
\end{theorem}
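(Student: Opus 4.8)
The plan is to prove the stronger quantitative statement that, $\P_{\theta,x}$-almost surely, there is a finite (path-dependent) time after which $w(\theta_i)$ never exceeds a level strictly below $M$; since $\mathcal{W}_M=\{w\le M\}$ and $\theta_i\in\mathcal{R}_i\subset\hat{\Theta}$ always holds, this is exactly the asserted eventual containment in $\mathcal{W}_M$. Fix $\epsilon>0$ small enough that $4\epsilon<M-M_0$. The first step is to pass to an a.s.~finite time $N$ after which three quantities are simultaneously small: by Lemma \ref{lem:D_n_has_prob_1} the increments satisfy $w(\theta_{i+1})-w(\theta_i)\le\epsilon$ for all $i\ge N$; by Condition \ref{cond:abstract-growth}\eqref{eq:square-summable} the tail $\frac{C_w}{2}\sum_{i\ge N}\Gamma_{i+1}^2|H(\theta_i,X_{i+1})|^2$ is below $\epsilon$; and by \eqref{eq:noise-negligible} the tail suprema $\sup_{k>m}|\sum_{i=m}^{k}\Gamma_{i+1}\langle\nabla w(\theta_i),\bar{H}(\theta_i,X_{i+1})\rangle|$ are below $\epsilon$ for $m\ge N$. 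Each holds $\P_{\theta,x}$-a.s., so such an $N$ exists a.s.

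The basic estimate I would use throughout is the one-step bound already established inside the proof of Theorem \ref{th:unbounded-w-stability}, namely $w(\theta_{i+1})-w(\theta_i)\le\Gamma_{i+1}\langle\nabla w(\theta_i),h(\theta_i)\rangle+\Gamma_{i+1}\langle\nabla w(\theta_i),\bar{H}(\theta_i,X_{i+1})\rangle+\frac{C_w}{2}\Gamma_{i+1}^2|H(\theta_i,X_{i+1})|^2$, together with the observation that whenever $w(\theta_i)>M_0$ one has $\theta_i\in\mathcal{R}_i\smallsetminus\mathcal{W}_{M_0}$ and hence, by Condition \ref{cond:stringent-drift}, $\langle\nabla w(\theta_i),h(\theta_i)\rangle\le-\delta_i\le0$. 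Using this, I would next show the process must descend to the set $\{w\le M_0+\epsilon\}$ after time $N$. If $w(\theta_N)\le M_0+\epsilon$ this is immediate; otherwise, supposing $w(\theta_i)>M_0+\epsilon$ for every $i$ in $[N,k]$ makes the drift active at each such step, and summing the one-step bound gives $w(\theta_{k+1})-w(\theta_N)\le-\sum_{i=N}^{k}\Gamma_{i+1}\delta_i+2\epsilon$. Since $\sum_i\Gamma_{i+1}\delta_i=\infty$ (which follows from $\sum_i\Gamma_i\delta_i=\infty$, the index shift being harmless because $\delta_i$ is non-increasing in $i$), the right-hand side tends to $-\infty$, contradicting $w\ge0$. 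Hence $\tau_\ast\defeq\inf\{i\ge N:w(\theta_i)\le M_0+\epsilon\}$ is finite a.s. Note that this descent argument uses only $w\ge0$, so the boundedness conclusion of Theorem \ref{th:unbounded-w-stability} is not actually needed here.

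Finally, and this is the delicate part, I would run a first-passage/trapping argument from $\tau_\ast$ onward to show $w(\theta_i)\le M_0+4\epsilon$ for all $i\ge\tau_\ast$. Suppose not, let $k>\tau_\ast$ be the first time $w(\theta_k)>M_0+4\epsilon$, and let $\tau\in[\tau_\ast,k)$ be the last time with $w(\theta_\tau)\le M_0+\epsilon$ (which exists since $w(\theta_{\tau_\ast})\le M_0+\epsilon$). For every $i\in(\tau,k)$ we have $w(\theta_i)>M_0+\epsilon>M_0$, so the drift is active and the corresponding increments are controlled by the noise and quadratic contributions alone; bounding the single entry increment at $i=\tau$ by $\epsilon$ and summing from $\tau$ to $k-1$ yields $w(\theta_k)\le w(\theta_\tau)+3\epsilon\le M_0+4\epsilon$, a contradiction. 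Thus with $T\defeq\tau_\ast$ we obtain $w(\theta_i)\le M_0+4\epsilon<M$ for all $i\ge T$, i.e.~$\theta_i\in\mathcal{W}_M$ eventually. I expect the main obstacle to lie precisely in the bookkeeping of this last step: isolating the one entry increment that cannot invoke the drift, and balancing the three $\epsilon$-level error terms against the margin $M-M_0$ so that an excursion starting just above $M_0$ can never climb all the way to height $M$.
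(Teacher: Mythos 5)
Your proof is correct, and while it rests on the same two-phase mechanism as the paper's (the drift forces the process back down near $\mathcal{W}_{M_0}$; the vanishing noise then prevents any later excursion from climbing by more than an arbitrarily small amount), the execution is genuinely different and arguably more elementary. The paper establishes recurrence to $\mathcal{W}_{M_0}$ by a conditional-expectation contradiction (the inequality \eqref{eq:impossible-ineq}, whose left side diverges on $\{\kappa_m=\infty\}$ by Condition \ref{cond:stringent-drift} while the right side stays bounded in $L^1$ by Condition \ref{cond:abstract-growth}), and then controls excursions through the event algebra of Lemma \ref{lemma:a-n} and the sets $A_m,B_m$. You instead fix a single $\P_{\theta,x}$-a.s.\ finite random time $N$ after which the three error quantities (jump sizes via Lemma \ref{lem:D_n_has_prob_1}, the quadratic tail via Condition \ref{cond:abstract-growth} \eqref{eq:square-summable}, and the noise suprema via \eqref{eq:noise-negligible}) are simultaneously below $\epsilon$, and then argue purely pathwise: descent to $\{w\le M_0+\epsilon\}$ follows from $w\ge 0$ and the a.s.\ divergence of $\sum_i\Gamma_{i+1}\delta_i$ (your index-shift remark is valid since $\mathcal{R}_i\subset\mathcal{R}_{i+1}$ makes $\delta_i$ non-increasing), and the first-passage/trapping step correctly isolates the one entry increment at $i=\tau$, which cannot invoke the drift, and charges it $\epsilon$. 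This buys a shorter, measure-theoretically lighter argument and a slightly stronger conclusion (a.s.\ eventual confinement to $\mathcal{W}_{M_0+4\epsilon}$ for every $\epsilon>0$, not merely to $\mathcal{W}_M$); the paper's formulation, by keeping the recurrence step in expectation, isolates more cleanly which hypotheses are used where. The only points needing a word of care are cosmetic: the supremum in \eqref{eq:noise-negligible} is over $k>m$, so the degenerate one-term sums appearing when an excursion has length one or two should be handled by writing a single term as a difference of two admissible partial sums (as in \eqref{eq:partial-sums}), and \eqref{eq:noise-negligible} should be read as holding $\P_{\theta,x}$-a.s., which is consistent with how the paper itself uses it in Lemma \ref{lemma:a-n}.
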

%
%
\begin{pf} 
We first show that $\theta_{n}$ must visit
$\mathcal{W}_{M_{0}}$
infinitely often $\P_{\theta,x}$-a.s., in other words
%
\begin{equation}\label{eq:w-m0-io}
\P_{\theta,x} \biggl({ \bigcup_{m\geq1}} {
\bigcap_{n\geq
m}}\{\theta_{n}\notin
\mathcal{W}_{M_{0}}\} \biggr)=0 .
\end{equation}
For any $m\geq0$, we define
the hitting times
$\kappa_{m}:=\inf\{i>m\dvt\theta_{i}\in\mathcal{W}_{M_{0}}\}$
and notice that
\[
{\bigcup_{m\geq1}} {\bigcap
_{n\geq m}} \{\theta_{n}\notin\mathcal{W}_{M_{0}}
\} ={ \bigcup_{m\geq1}}\{\theta_{m}\notin
\mathcal{W}_{M_{0}}\} \cap\{\kappa_{m}=\infty\} .
\]
Recall that for any $i\geq0$
\begin{eqnarray*}
w(\theta_{i+1})-w(\theta_{i}) &\leq&\Gamma_{i+1}
\bigl\langle\nabla w(\theta_{i}), h(\theta_{i}) \bigr\rangle
\\
&&{} +\Gamma_{i+1} \bigl\langle\nabla w(\theta_{i}),\bar{H}(
\theta_{i},X_{i+1}) \bigr\rangle +\Gamma_{i+1}^2
\frac{C_{w}}{2}\bigl|H(\theta_{i},X_{i+1})\bigr|^{2} .
\end{eqnarray*}
So in particular, and thanks to Condition
\ref{cond:stringent-drift}, for
$n>m$
\begin{eqnarray*}
&&\mathbb{I}\{\theta_{m} \notin\mathcal{W}_{M_{0}}\} \bigl[w(
\theta_{n\wedge\kappa_{m}})-w(\theta_{m}) \bigr]
\\
&&\quad  =\mathbb{I}\{\theta_{m}\notin\mathcal{W}_{M_{0}}\} {\sum
_{i=m}^{(n\wedge\kappa_{m})-1}} \mathbb{I}\{
\theta_{i}\notin\mathcal{W}_{M_{0}}\} \bigl[w(
\theta_{i+1})-w(\theta_{i}) \bigr]
\\
&&\quad  \leq\mathbb{I}\{\theta_{m}\notin\mathcal{W}_{M_{0}}\} {
\sum_{i=m}^{(n\wedge\kappa_{m})-1}} \Gamma_{i+1}
\biggl[-\delta_{i}+ \bigl\langle\nabla w(\theta_{i}),
\bar{H}(\theta_{i},X_{i+1}) \bigr\rangle +
\Gamma_{i+1}\frac{C_{w}}{2}\bigl|H(\theta_{i},X_{i+1})\bigr|^{2}
\biggr] .
\end{eqnarray*}
From this, we obtain the following inequality holding
$\P_{\theta,x}$-a.s. on $\{\theta_{m}\notin\mathcal{W}_{M_{0}}\}$
for any $n>m$
%
\begin{eqnarray}\label{eq:impossible-ineq}
&&\E_{\theta,x} \Biggl[ \mathbb{I}\{\kappa_m=\infty\} {\sum
_{i=m}^{n-1}} \Gamma_{i+1}
\delta_{i} \Bigl| \mathcal{F}_{m} \Biggr] - w(
\theta_{m})\nonumber\\
&&\quad \le \E_{\theta,x} \Biggl[ \mathbb{I}\{\kappa_m=\infty\} {
\sum_{i=m}^{n-1}} \Gamma_{i+1}
\bigl\langle\nabla w(\theta_{i}),\bar{H}(\theta_{i},X_{i+1})
\bigr\rangle \\
&&\quad \hphantom{\le \E_{\theta,x} \Biggl[ \mathbb{I}\{\kappa_m=\infty\} {
\sum_{i=m}^{n-1}}}{}+\Gamma_{i+1}^2\frac{C_{w}}{2}\bigl|H(
\theta_{i},X_{i+1})\bigr|^{2} \Bigl| \mathcal{F}_{m}
\Biggr] .
\nonumber
\end{eqnarray}
Using this inequality, we shall see that for any $m>0$
%
\begin{equation}\label{eq:theta-notin-m0-forgood}
\P_{\theta,x} \bigl(\{\theta_{m}\notin\mathcal{W}_{M_{0}}
\} \cap\{\kappa_{m}=\infty\} \bigr)=0 .
\end{equation}
Suppose the contrary, $\P_{\theta,x}(\{\theta_{m}\notin\mathcal
{W}_{M_{0}}\}
\cap\{\kappa_{m}=\infty\})> 0$.
Then, because of Condition \ref{cond:stringent-drift},
we observe that the conditional expectation on the
left hand side of
\eqref{eq:impossible-ineq}
necessarily tends to infinity almost surely as $n\to\infty$.
Denote then the conditional expectation on the
right hand side of \eqref{eq:impossible-ineq}
by $E_{\theta,x}^{(m,n)}$.
As in the proof of Theorem~\ref{th:unbounded-w-stability}, we
have the following upper bound
\[
\E_{\theta,x}\bigl[E_{\theta,x}^{(m,n)}\bigr] \le
\E_{\theta,x} \Biggl[\sup_{k\ge0} \Biggl\llvert \sum
_{i=0}^k \Gamma_{i+1} \bigl\langle\nabla
w(\theta_{i}),\bar{H}(\theta_{i},X_{i+1}) \bigr
\rangle \Biggr\rrvert + \sum_{i=0}^\infty
\Gamma_{i+1}^2\frac{C_{w}}{2}\bigl|H(\theta_{i},X_{i+1})\bigr|^{2}
\Biggr] ,
\]
which is finite by Condition \ref{cond:abstract-growth} and
independent of $m$ and $n$.
By letting $n\to\infty$ we end up with a
contradiction, unless
\eqref{eq:theta-notin-m0-forgood} holds.
Consequently, the event
\[
{\bigcup_{m\geq1}} \{\theta_{m}\notin
\mathcal{W}_{M_{0}}\}\cap\{\kappa_{m}=\infty\}
\]
has null probability and we obtain \eqref{eq:w-m0-io}.

We now show that for any fixed $M>M_{0}$
\[
\P_{\theta,x} \biggl( \bigcup_{m\geq0}\bigcap
_{n\geq m} \{\theta_{n}\in
\mathcal{W}_{M}\} \biggr)=1 .
\]
We are going to apply Lemma~\ref{lemma:a-n} below with
$\delta= M-M_0>0$ to the events
\[
A_{m} =\{\theta_{m}\in\mathcal{W}_{M_{0}}\}
\cap{ \bigcup_{k>m}}\{\theta_{k} \notin
\mathcal{W}_{M}\} ,
\]
and denote
\[
B_{m}:=\{\theta_{m}\in\mathcal{W}_{M_{0}}\}
\setminus A_m = \{\theta_{m}\in\mathcal{W}_{M_{0}}
\} \cap{ \bigcap_{k>m}}\{\theta_{k}\in
\mathcal{W}_{M}\} .
\]
We may write
\begin{eqnarray*}
{ \bigcap_{n\geq1}} {\bigcup
_{m\geq n}}\{\theta_{m} \in\mathcal{W}_{M_{0}}
\} & =&{ \bigcap_{n\geq1}} {\bigcup
_{m\geq n}}A_{m}\cup B_{m}
\\
& =&{ \bigcap_{n\geq1}} \biggl[ \biggl({\bigcup
_{m\geq n}}A_{m} \biggr) \cup \biggl({\bigcup
_{m\geq n}}B_{m} \biggr) \biggr] .
\end{eqnarray*}
Now, since ${\bigcup_{m\geq n}}A_{m}$ and
${\bigcup_{m\geq n}}B_{m}$ are both decreasing events
with respect to $n\to\infty$, we have
\begin{eqnarray*}
1 &=& \lim_{n\rightarrow\infty}\P_{\theta,x} \biggl({\bigcup
_{m\geq n}} \{\theta_{m}\in\mathcal{W}_{M_{0}}
\} \biggr)
\\
&=&\lim_{n\rightarrow\infty} \biggl[ \P_{\theta,x} \biggl({ \bigcup
_{m\geq n}}A_{m} \biggr) +\P_{\theta,x} \biggl({
\bigcup_{m\geq n}}B_{m} \biggr) -
\P_{\theta,x} \biggl({ \bigcup_{m\geq n}}A_{m}
\cap{ \bigcup_{m\geq
n}}B_{m} \biggr) \biggr]
.
\end{eqnarray*}
By Lemma~\ref{lemma:a-n},
$\lim_{n\to\infty} \P_{\theta,x} (\bigcup_{m\ge n} A_m )
= 0$, so we end up with
$\lim_{n\rightarrow\infty}
\P_{\theta,x} (\bigcup_{m\geq n}B_{m} )=1$,
implying the claim.
\end{pf}
%
%
\begin{lemma}\label{lemma:a-n} 
Assume the conditions of Theorem~\ref{th:bounded-w-stability}, let
$\delta>0$
and denote
\[
A_m := \{\theta_{m}\in\mathcal{W}_{M_{0}}\}
\cap{\bigcup_{k>m}}\{\theta_{k}\notin
\mathcal{W}_{M_0+\delta}\} .
\]
Then, $\lim_{n\to\infty} \P_{\theta,x} (\bigcup_{m\ge n}
A_m )=0$.
\end{lemma}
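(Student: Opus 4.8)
The plan is to prove the equivalent statement that, $\P_{\theta,x}$-almost surely, only finitely many of the events $A_{m}$ occur. Since $\bigcup_{m\ge n}A_{m}$ is decreasing in $n$, continuity of measure gives $\lim_{n\to\infty}\P_{\theta,x}\big(\bigcup_{m\ge n}A_{m}\big)=\P_{\theta,x}\big(\bigcap_{n}\bigcup_{m\ge n}A_{m}\big)=\P_{\theta,x}(\limsup_{m}A_{m})$, and the latter is exactly the probability that $\theta_{m}\in\mathcal{W}_{M_0}$ is followed by an excursion out of $\mathcal{W}_{M_0+\delta}$ for infinitely many $m$. So it suffices to exhibit a full-probability event on which this can happen for only finitely many $m$.

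First I would isolate the relevant tail controls. Fix the given $\delta>0$ and choose a (random, a.s.~finite) index $N$ such that for all $i\ge N$: (a) $w(\theta_{i+1})-w(\theta_{i})\le \delta/3$, available from Lemma \ref{lem:D_n_has_prob_1} since $\limsup_{i}[w(\theta_{i+1})-w(\theta_{i})]\le 0$; (b) the quadratic tail satisfies $\frac{C_{w}}{2}\sum_{l\ge i}\Gamma_{l+1}^{2}|H(\theta_{l},X_{l+1})|^{2}\le \delta/3$, which holds eventually because Condition \ref{cond:abstract-growth} \eqref{eq:square-summable} makes this series $\P_{\theta,x}$-a.s.~finite; and (c) every tail block satisfies $\big|\sum_{i=l}^{l'}\Gamma_{i+1}\langle\nabla w(\theta_{i}),\bar{H}(\theta_{i},X_{i+1})\rangle\big|\le \delta/3$ whenever $N\le l\le l'$, which follows from the additional noise assumption \eqref{eq:noise-negligible} (in its tail-oscillation form). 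Each of (a)--(c) holds for all sufficiently large $i$, so $N<\infty$ a.s.

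On this event I would argue that $A_{m}$ cannot occur for any $m\ge N$. Suppose, for contradiction, that $m\ge N$ and $A_{m}$ holds; let $k>m$ be the first time with $\theta_{k}\notin\mathcal{W}_{M_0+\delta}$, and let $j$ be the last index in $\{m,\dots,k-1\}$ with $\theta_{j}\in\mathcal{W}_{M_0}$ (it exists since $\theta_{m}\in\mathcal{W}_{M_0}$), so that $j\ge m\ge N$. By (a), $w(\theta_{j+1})\le w(\theta_{j})+\delta/3\le M_0+\delta/3$; if $j+1=k$ this already contradicts $w(\theta_{k})>M_0+\delta$, so we may take $j+1<k$. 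For $j+1\le i\le k-1$ we have $\theta_{i}\in\mathcal{R}_{i}\smallsetminus\mathcal{W}_{M_0}$ (recall $\theta_{i}\in\mathcal{R}_{i}$ always by construction, while maximality of $j$ excludes $\mathcal{W}_{M_0}$), so Condition \ref{cond:stringent-drift} gives $\langle\nabla w(\theta_{i}),h(\theta_{i})\rangle\le-\delta_{i}\le 0$. Summing the Taylor increment bound used in the proof of Theorem \ref{th:bounded-w-stability} from $j+1$ to $k-1$ and discarding the nonpositive drift terms yields
\[
w(\theta_{k})-w(\theta_{j+1})\le
\Big|\sum_{i=j+1}^{k-1}\Gamma_{i+1}\langle\nabla w(\theta_{i}),\bar{H}(\theta_{i},X_{i+1})\rangle\Big|
+\frac{C_{w}}{2}\sum_{i=j+1}^{\infty}\Gamma_{i+1}^{2}|H(\theta_{i},X_{i+1})|^{2}
\le \frac{\delta}{3}+\frac{\delta}{3}\ ,
\]
using (c) and (b) with all indices at least $N$. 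Hence $w(\theta_{k})\le M_0+\delta/3+2\delta/3=M_0+\delta$, contradicting $\theta_{k}\notin\mathcal{W}_{M_0+\delta}$; therefore no $A_{m}$ with $m\ge N$ occurs, which proves the claim.

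This is a controlled-excursion estimate in the same spirit as the partial-sum bounds in Theorem \ref{th:unbounded-w-stability}, with the drift now actively pushing $w$ downward throughout the excursion. The one point needing care is the boundary increment at the last visit $j$, where $\theta_{j}\in\mathcal{W}_{M_0}$ makes the drift unavailable; this single jump must instead be absorbed by the vanishing-increment property of Lemma \ref{lem:D_n_has_prob_1}, which is exactly why the bound is split as $w(\theta_{k})-w(\theta_{j})=[w(\theta_{j+1})-w(\theta_{j})]+[w(\theta_{k})-w(\theta_{j+1})]$. A minor technicality is that the excursion noise block in (c) should be read as a tail-oscillation bound derived from \eqref{eq:noise-negligible}; any harmless numerical factor incurred there can be absorbed by choosing the thresholds in (a)--(c) slightly smaller.
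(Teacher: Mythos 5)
Your proof is correct and follows essentially the same route as the paper's: you decompose each excursion at the last visit to $\mathcal{W}_{M_0}$ before the exit from $\mathcal{W}_{M_0+\delta}$ (your index $j+1$ is the paper's $\tau_m$), absorb the single boundary jump via Lemma \ref{lem:D_n_has_prob_1}, discard the nonpositive drift terms, and control the remaining increments by the vanishing noise partial sums from \eqref{eq:noise-negligible} and the a.s.\ convergent quadratic series from Condition \ref{cond:abstract-growth} \eqref{eq:square-summable}. The only difference is presentational: the paper packages the same estimate as a decreasing sequence of events $\tilde{A}_m\supset A_m$ with $\P_{\theta,x}(\tilde{A}_m)\to 0$, whereas you phrase it as an a.s.\ finite random index $N$ beyond which no $A_m$ can occur; these are equivalent.
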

%
%
\begin{pf} 
Define the random times
$\sigma_m:=\inf\{i>m\dvt\theta_i\notin\mathcal{W}_{M_0+\delta}\}$
and $\tau_m:=\sup\{i\in[m,\sigma_m)\dvt
\theta_i\in\mathcal{W}_{M_0}\}+1$,
both finite on $A_m$.
Recall that on $\{\theta_i\in\mathcal{W}_{M_0}^c\}$ we have
\begin{eqnarray*}
w(\theta_{i+1})-w(\theta_i) \le \Gamma_{i+1}
\bigl\langle\nabla w(\theta_{i}),\bar{H}(\theta_{i},X_{i+1})
\bigr\rangle +\Gamma_{i+1}^2\frac{C_{w}}{2}\bigl|H(
\theta_{i},X_{i+1})\bigr|^{2} ,
\end{eqnarray*}
so on $A_m$ we may bound
\begin{eqnarray*}
w(\theta_{\sigma_m})-w(\theta_{\tau_m}) &\le& \sum
_{i=\tau_m}^{\sigma_m-1} \Gamma_{i+1} \bigl\langle\nabla
w(\theta_{i}),\bar{H}(\theta_{i},X_{i+1}) \bigr
\rangle +\Gamma_{i+1}^2\frac{C_{w}}{2}\bigl|H(
\theta_{i},X_{i+1})\bigr|^{2}
\\
&\le&2\sup_{k>m}\Biggl\llvert \sum_{i=m}^{k}
\Gamma_{i+1} \bigl\langle\nabla w(\theta_{i}), \bar{H}(
\theta_{i},X_{i+1}) \bigr\rangle\Biggr\rrvert +\sum
_{i=m}^\infty\Gamma_{i+1}^2
\frac{C_{w}}{2}\bigl|H(\theta_{i},X_{i+1})\bigr|^{2}\\
&\hphantom{:}=:&C_m
\end{eqnarray*}
by a similar argument as in \eqref{eq:partial-sums}.
On $A_m$ one clearly has
$w(\theta_{\sigma_m})-w(\theta_{\tau_m-1})>\delta$, implying that
$C_m + w(\theta_{\tau_m})-w(\theta_{\tau_m-1})>\delta$.
We deduce that
\[
\tilde{A}_m := \Bigl\{C_m + \sup_{i\ge m}
\bigl[w(\theta_{i+1})-w(\theta_i)\bigr] > \delta \Bigr\}
\supset A_m .
\]
The sets $\tilde{A}_m$ are clearly decreasing with respect to $m$ and
$\lim_{m\to\infty}
\P_{\theta,x} (\tilde{A}_m )=0$
by Lemma~\ref{lem:D_n_has_prob_1} and
because Condition \ref{cond:abstract-growth}\ref{eq:square-summable} and \eqref{eq:noise-negligible} imply
$\lim_{m\to\infty} C_m = 0$.
This concludes the proof, because $\bigcup_{m\ge n} A_m \subset
\bigcup_{m\ge n} \tilde{A}_m = \tilde{A}_n$.
\end{pf}
%



\section{Verifying noise conditions}\label{sec:noise} 

The aim of this section is to provide verifiable conditions which will
imply the conditions of the stability theorems in Section~\ref{sec:stability}.
We proceed progressively and start by a general result in
Theorem~\ref{thm:general-noise-theorem} which
ensures both Condition \ref{cond:abstract-growth} and that in
\eqref{eq:noise-negligible} hold given a set of abstract conditions
involving some expectations as well as
properties of the solutions of the Poisson equation.

Condition \ref{cond:general-noise-theorem-cond}, required in Theorem~\ref{thm:general-noise-theorem},
shall be verified in detail below for a family of geometrically
ergodic Markov kernels. In Section~\ref{sec:geometric}, we first
gather general known results related to
Condition \ref{cond:general-noise-theorem-cond}\ref{eq:V-exp-bound}
and \ref{eq:poisson-bound}. In Section~\ref{sec:continuous-kernels},
we consider the case where the mapping $\theta\to P_\theta$ is
H\"{o}lder continuous,
which allows us to establish Condition \ref
{cond:general-noise-theorem-cond}\ref{eq:contin-sum}.
In Section~\ref{sec:random-step-size}, we consider the
case where the aforementioned H\"{o}lder continuity may not hold, and
a continuity is enforced by using a random step
size sequence, allowing us to recover
Condition \ref{cond:general-noise-theorem-cond}\ref{eq:contin-sum} in such situations.
%
\begin{condition}\label{cond:general-noise-theorem-cond} 
Condition \ref{cond:w} holds with constants
$(\xi_i)_{i\ge0}$ and $\alpha_w\in(0,\infty)$.
For all $\theta\in\hat{\Theta}$, the solution $g_\theta\dvtx \mathsf
{X}\to\Theta$ to the
Poisson equation
$g_\theta(x)- P_\theta g_\theta(x) \equiv\bar{H}(\theta,x)$ exists and
for all $i\ge0$
the step size $\Gamma_{i+1}$ is independent
of $\F_i$ and $X_{i+1}$.
Moreover, there exist a measurable function $V\dvtx \mathsf{X}\to[1,\infty
)$ and
constants $c<\infty$, $\beta_H,\beta_g\in[0,1/2]$ and
$\alpha_g,\alpha_H,\alpha_V\in[0,\infty)$ such that
for all $(\theta,x)\in\mathcal{R}_0\times\mathsf{X}$
\renewcommand\theenumi{(\roman{enumi})}
\renewcommand\labelenumi{\theenumi}
\begin{enumerate}[(vii)]
\item\label{eq:H-bound}\hspace*{4pt} $\displaystyle \sup_{\theta\in\mathcal{R}_i} \bigl|H(\theta,x)\bigr| \le c \xi_i^{\alpha_H}
V^{\beta_H}(x)$,\vspace*{5pt}

\item\label{eq:V-exp-bound} \hspace*{4pt}$\E_{\theta,x}\bigl[V(X_i)\bigr]  \le c \xi_i^{\alpha_V}
V(x)$,\vspace*{5pt}

\item\label{eq:poisson-bound} \hspace*{4pt}$\displaystyle \sup_{\theta\in\mathcal{R}_i} \bigl[ \bigl|g_\theta(x)\bigr| + \bigl|P_\theta
g_\theta(x)\bigr| \bigr] \le c \xi_i^{\alpha_g}
V^{\beta_g}(x)$,\vspace*{3pt}

\item\label{eq:contin-sum} \hspace*{4pt}$\displaystyle \sum_{i=1}^\infty\E[\Gamma_{i+1}]
\xi_i^{\alpha_w} \E_{\theta,x} \bigl[\bigl|P_{\theta_i}g_{\theta_i}(X_i)
-P_{\theta_{i-1}}g_{\theta_{i-1}}(X_i)\bigr| \bigr] <\infty $,\vspace*{4pt}

\item\label{eq:martingale-suff}\hspace*{4pt} $\displaystyle \sum_{i=1}^\infty\E\bigl[
\Gamma_i^2\bigr] \xi_i^{2\alpha_w+2((\alpha_H +
\beta_H
\alpha_V)\vee( \alpha_g + \beta_g \alpha_V))} <
\infty $,\vspace*{4pt}

\item\label{eq:prod-consecutive-weights}\hspace*{4pt} $\displaystyle \sum_{i=1}^\infty \E[\Gamma_{i+1}
\Gamma_i] \xi_{i}^{\alpha_H+\alpha_g+(\beta
_H+\beta_g)\alpha_V} <\infty $,\vspace*{3pt}

\item\label{eq:diff-consecutive-weights} \hspace*{4pt}$\displaystyle \sum_{i=1}^\infty\bigl|\E[\Gamma_{i+1}-
\Gamma_i]\bigr| \xi_{i}^{\alpha_w+\alpha_g+\beta_g\alpha_V} < \infty$,
\end{enumerate}
where we write
$\E:=\E_{\theta,x}$ whenever the expectation does not depend on
$\theta$ and $x$.
\end{condition}
%
%
\begin{remark}\label{rem:discussion-of-constants} 
These assumptions call for various comments of practical relevance to
the actual implementation
of the algorithm with expanding projections. Once $H(\cdot,\cdot)$
and $\{P_\theta\}_{\theta\in\Theta}$ are chosen the
user is left with the choice of $(\xi_i)_{i \geq0}$ and $(\Gamma_i)_{i \geq0}$, which must in particular
satisfy the summability conditions above. For the purpose of efficiency
we would like $(\xi_i)_{i \geq0}$ to grow as fast as possible, as we
may otherwise slow convergence down.
A common choice for the step-size sequence is $\Gamma_i = ci^{-\eta}$
for some constants
$c\in(0,\infty)$ and $\eta\in(1/2,1]$ -- this implies a required
condition to establish convergence. The sequence $(\xi_i)_{i \geq0}$
is determined
by the user through the choice of the sequence of reprojection sets
$(\mathcal{R}_i)_{i \geq0}$ and we point out that the constants
$\alpha_H,\alpha_V$ and $\alpha_g$ typically depend on that choice
(whereas $\beta_H$ and $\beta_g$ typically do not). We show how these
constants can be obtained from the properties of $\{P_\theta\}_{\theta
\in\Theta}$ in Sections \ref{sec:geometric}--\ref{sec:random-step-size}.
Now if $(\xi_i)_{i \geq0}$ is increasing
at a rate slower than any power sequence, for example of the order
$\log i$ or
$i^{(\log i)^{-p}}$ for some $p\in(0,1)$, then it is easy to see that
the summability conditions
\ref{eq:martingale-suff}--\ref{eq:diff-consecutive-weights} are
always satisfied. In the situation where $\xi_i = i^p$ for $p \in
(0,1]$, then the conditions
\ref{eq:martingale-suff}--\ref{eq:diff-consecutive-weights}
require stricter assumptions on $\eta$ and the constants $\alpha_H,\alpha_V,\alpha_g,\beta_H$ and $\beta_g$ which
may not be satisfiable.
We however point out a possible sub-optimality of the results stated
above. Indeed, in order to simplify presentation we
have decided to quantify the growth of the various quantities involved
in the algorithm in terms of powers of $(\xi_i)_{i \geq0}$ only, whereas
other scales may be possible, such as $\log(\xi_i)$, in which case
some of the constants $\alpha_H,\alpha_V$ or $\alpha_g$ may be taken
arbitrarily
small in the statement above. It is also possible to revisit our proofs
with such more precise estimates and obtain a set of weaker assumptions.

In practice, the conditions \ref{eq:poisson-bound}
and \ref{eq:contin-sum} add more requirements which are
inter-related with
\ref{eq:martingale-suff}--\ref{eq:diff-consecutive-weights};
Propositions \ref{prop:final-prop-cont} and \ref{prop:nonsmooth-noise}
summarise the conditions when $\theta\mapsto P_\theta$ admits a
H\"{o}lder-continuity, and when a random step size sequence is used to
satisfy \ref{eq:contin-sum}, respectively.
Appendix~\ref{sec:nom} contains a summary of the related constants.
\end{remark}
%
%
\begin{theorem}\label{thm:general-noise-theorem} 
Suppose Conditions \ref{cond:w}
and \ref{cond:general-noise-theorem-cond} hold
and for all $i\ge0$
the projections satisfy
$|\theta_{i+1}-\theta_i|\le|\theta_{i+1}^*-\theta_i|$.
Then, for all $(\theta,x)\in\mathcal{R}_0\times\mathsf{X}$,
%
\begin{eqnarray}
\label{eq:square-summable-noise}\E_{\theta,x} \Biggl[{\sum_{i=0}^\infty}
\Gamma_{i+1}^{2}\xi_{i}^{2\alpha_w}\bigl|H(
\theta_{i},X_{i+1})\bigr|^{2} \Biggr] &<&\infty,
\\
\label{eq:noise-negligible-poisson}\lim_{m\to\infty} \E_{\theta,x} \Biggl[ \sup_{n\ge m} \Biggl
\llvert \sum_{i=m}^{n}\Gamma_{i+1}
\bigl\langle\nabla w(\theta_{i}),\bar{H}(\theta_{i},X_{i+1})
\bigr\rangle\Biggr\rrvert \Biggr] &= &0 .
\end{eqnarray}
\end{theorem}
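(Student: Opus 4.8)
The plan is to establish the two displays separately: \eqref{eq:square-summable-noise} is essentially a moment computation, while \eqref{eq:noise-negligible-poisson} requires a Poisson-equation martingale decomposition. For \eqref{eq:square-summable-noise} I would first note that $\theta_i\in\mathcal{R}_i$ by construction, so Condition \ref{cond:general-noise-theorem-cond} \eqref{eq:H-bound} gives the pathwise bound $|H(\theta_i,X_{i+1})|\le c\,\xi_i^{\alpha_H}V^{\beta_H}(X_{i+1})$. Taking $\E_{\theta,x}$ term by term, the independence of $\Gamma_{i+1}$ from $(\F_i,X_{i+1})$ factorises $\E_{\theta,x}[\Gamma_{i+1}^2\,V^{2\beta_H}(X_{i+1})]=\E[\Gamma_{i+1}^2]\,\E_{\theta,x}[V^{2\beta_H}(X_{i+1})]$, and since $2\beta_H\le 1$ Jensen's inequality together with \eqref{eq:V-exp-bound} yields $\E_{\theta,x}[V^{2\beta_H}(X_{i+1})]\le (c\,\xi_{i+1}^{\alpha_V}V(x))^{2\beta_H}$. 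After re-indexing and using monotonicity of $(\xi_i)$, the resulting series is dominated by the one in \eqref{eq:martingale-suff} (the $\alpha_H+\beta_H\alpha_V$ branch of the maximum), hence finite.

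For \eqref{eq:noise-negligible-poisson} I would use the Poisson solution to write $\bar{H}(\theta_i,X_{i+1})=e_{i+1}+\big(U_i-Z_{i+1}\big)$, where $U_i\defeq P_{\theta_i}g_{\theta_i}(X_i)$, $Z_{i+1}\defeq P_{\theta_i}g_{\theta_i}(X_{i+1})$, and $e_{i+1}\defeq g_{\theta_i}(X_{i+1})-U_i$ is a martingale increment, i.e.\ $\E_{\theta,x}[e_{i+1}\mid\F_i]=0$. Writing $a_i\defeq \Gamma_{i+1}\nabla w(\theta_i)$ and applying summation by parts to the finite-variation part $\sum_i\langle a_i,U_i-Z_{i+1}\rangle$, the sum $\sum_{i=m}^n\Gamma_{i+1}\langle\nabla w(\theta_i),\bar{H}(\theta_i,X_{i+1})\rangle$ splits into (a) the martingale transform $\sum\langle a_i,e_{i+1}\rangle$; (b) the continuity differences $\sum\Gamma_{i+1}\langle\nabla w(\theta_i),U_i-Z_i\rangle$, where $U_i-Z_i=P_{\theta_i}g_{\theta_i}(X_i)-P_{\theta_{i-1}}g_{\theta_{i-1}}(X_i)$ is exactly the quantity in \eqref{eq:contin-sum}; (c) a remainder $\sum\langle a_i-a_{i-1},Z_i\rangle$; and (d) boundary terms $\langle a_m,U_m\rangle-\langle a_n,Z_{n+1}\rangle$. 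The crucial indexing point is that each continuity difference $U_i-Z_i$ is $\F_i$-measurable, hence independent of $\Gamma_{i+1}$, which is precisely why \eqref{eq:contin-sum} carries the factor $\E[\Gamma_{i+1}]$.

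I would then estimate $\E_{\theta,x}[\sup_{n\ge m}|\,\cdot\,|]$ of each piece, reducing the claim to almost sure convergence of the full series together with an integrable dominating supremum, so that dominated convergence forces the tail suprema to zero. The martingale transform is handled by Doob's $L^2$ maximal inequality: with $|a_i|\le c\,\xi_i^{\alpha_w}\Gamma_{i+1}$ and $|e_{i+1}|\le c\,\xi_i^{\alpha_g}\big(V^{\beta_g}(X_{i+1})+V^{\beta_g}(X_i)\big)$ pathwise, factorising the independent $\Gamma_{i+1}$ and invoking \eqref{eq:V-exp-bound}, the sum of squared increments is controlled by the $\alpha_g+\beta_g\alpha_V$ branch of \eqref{eq:martingale-suff}. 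The continuity sum is absolutely summable in $L^1$ by \eqref{eq:contin-sum}. For the remainder I would write $a_i-a_{i-1}=\Gamma_{i+1}\big[\nabla w(\theta_i)-\nabla w(\theta_{i-1})\big]+(\Gamma_{i+1}-\Gamma_i)\nabla w(\theta_{i-1})$; the first part is bounded via $|\nabla w(\theta_i)-\nabla w(\theta_{i-1})|\le C_w|\theta_i-\theta_{i-1}|\le C_w\Gamma_i|H(\theta_{i-1},X_i)|$, where the projection hypothesis $|\theta_i-\theta_{i-1}|\le|\theta_i^{\ast}-\theta_{i-1}|$ is essential, and combined with $|Z_i|\le c\,\xi_i^{\alpha_g}V^{\beta_g}(X_i)$ it is summable in $L^1$ by \eqref{eq:prod-consecutive-weights} (the $X_i$-factors carry total exponent $\beta_H+\beta_g\le 1$, so Jensen applies and the weights $\Gamma_{i+1}\Gamma_i$, being jointly independent of $X_i$, factor out). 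The boundary terms vanish as $m\to\infty$ because \eqref{eq:martingale-suff} forces $\E[\Gamma_{i+1}^2]\xi_i^{2\alpha_w+2(\alpha_g+\beta_g\alpha_V)}\to 0$.

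The main obstacle is the second part of the remainder, the step-difference term $\sum_i(\Gamma_{i+1}-\Gamma_i)Y_i$ with $Y_i\defeq\langle\nabla w(\theta_{i-1}),Z_i\rangle$, because \eqref{eq:diff-consecutive-weights} controls only $|\E[\Gamma_{i+1}-\Gamma_i]|$ and not $\E|\Gamma_{i+1}-\Gamma_i|$, so a crude bound by absolute values is unavailable. The resolution exploits that $Y_i=\langle\nabla w(\theta_{i-1}),P_{\theta_{i-1}}g_{\theta_{i-1}}(X_i)\rangle$ is $\sigma(\F_{i-1},X_i)$-measurable, hence independent of both weights $\Gamma_i$ and $\Gamma_{i+1}$ in the increment. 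I would split $(\Gamma_{i+1}-\Gamma_i)Y_i=\E[\Gamma_{i+1}-\Gamma_i]\,Y_i+\big(\Gamma_{i+1}-\Gamma_i-\E[\Gamma_{i+1}-\Gamma_i]\big)Y_i$: the first sum is absolutely summable in $L^1$ by \eqref{eq:diff-consecutive-weights}, since $\E_{\theta,x}[|Y_i|]\le c\,\xi_i^{\alpha_w+\alpha_g+\beta_g\alpha_V}$, while the second is a mean-zero sum whose $L^2$ magnitude is governed by the second moments $\E[(\Gamma_{i+1}-\Gamma_i)^2]\le 2\E[\Gamma_{i+1}^2]+2\E[\Gamma_i^2]$ together with $\E_{\theta,x}[Y_i^2]\le c\,\xi_i^{2(\alpha_w+\alpha_g+\beta_g\alpha_V)}$, and is therefore summable by \eqref{eq:martingale-suff}. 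Setting up the filtration so that this fluctuation sum is genuinely a martingale (or at least has sufficiently orthogonal increments for a maximal $L^2$ bound), in the presence of the entanglement between the weights and the $(\theta_i,X_i)$-process, is the delicate point; once it is in place, all four pieces converge almost surely with integrable dominating suprema, and dominated convergence yields \eqref{eq:noise-negligible-poisson}.
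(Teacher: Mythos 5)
Your decomposition is essentially the paper's own: the moment computation for \eqref{eq:square-summable-noise} is identical, and your pieces (a)--(d) correspond exactly to the paper's terms $R^1_{m,n}$ (martingale, via Doob), $R^2_{m,n}$ (continuity, via \eqref{eq:contin-sum}), $R^3_{m,n}+R^5_{m,n}$ (your remainder, split via the Hessian bound and the step differences), and $R^4_{m,n}$ (boundary). All of those estimates go through as you describe. The one place where your argument is not complete is precisely the point you flag yourself: the centred fluctuation sum $\sum_i\bigl(\Gamma_{i+1}-\Gamma_i-\E[\Gamma_{i+1}-\Gamma_i]\bigr)Y_i$. This is \emph{not} a martingale transform with respect to any natural filtration, and its increments are not orthogonal, because consecutive increments share the weight $\Gamma_{i+1}$: conditioning on $\sigma(\F_i,X_{i+1})$ (which makes $Y_i$, $Y_{i+1}$ and $\Gamma_i$ measurable while leaving $\Gamma_{i+1},\Gamma_{i+2}$ independent), one computes
\begin{equation*}
\E\Bigl[\bigl(\Gamma_{i+1}-\Gamma_i-\E[\Gamma_{i+1}-\Gamma_i]\bigr)\bigl(\Gamma_{i+2}-\Gamma_{i+1}-\E[\Gamma_{i+2}-\Gamma_{i+1}]\bigr)\,\Big|\,\sigma(\F_i,X_{i+1})\Bigr]=-\Var(\Gamma_{i+1})\ ,
\end{equation*}
so the lag-one covariances are generically nonzero and Doob's inequality cannot be invoked directly. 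A plain $L^2$ bound on the full sum survives a lag-one correlation by Cauchy--Schwarz, but what you need is a \emph{maximal} inequality over the partial sums, and that is exactly the step left open.

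The paper closes this gap with Lemma \ref{lemma:random-weights}: split the sum over odd and even indices. Within each parity class the increments $(Z_{2j+1}-Z_{2j})B_{2j-1}$ no longer share any weight variable with one another, so after centring $\bar{Z}_j-\E[\bar{Z}_j]$ they are genuine martingale differences with respect to the coarsened filtration $\bar{\G}_j\defeq\G_{2j+1}$, Doob's $L^2$ maximal inequality applies to each half separately, and the mean part is handled by \eqref{eq:diff-consecutive-weights} exactly as you propose. So your mean/fluctuation split is the right idea, but it must be performed \emph{after} the odd--even decoupling (or with some equivalent blocking device); without that extra step the claim that the fluctuation part ``is therefore summable by \eqref{eq:martingale-suff}'' does not follow. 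Everything else in your proposal is sound and matches the paper's route.
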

%
%
\begin{pf} 
Throughout the proof, $C$ denotes a constant which may have a
different value upon each appearance.
For \eqref{eq:square-summable-noise}, we may use
Condition \ref{cond:general-noise-theorem-cond}\ref{eq:H-bound} and \ref{eq:V-exp-bound}
with Jensen's inequality to obtain
\begin{eqnarray*}
\E_{\theta,x} \Biggl[{\sum_{i=0}^\infty}
\Gamma_{i+1}^{2}\xi_{i}^{2\alpha_w}\bigl|H(
\theta_{i},X_{i+1})\bigr|^{2} \Biggr] &\le& C {\sum
_{i=0}^\infty} \E\bigl[\Gamma_{i+1}^{2}
\bigr] \xi_i^{2\alpha_w+2\alpha_H} \E_{\theta,x}\bigl[V^{2\beta_H}(X_{i+1})
\bigr]
\\
&\le& C V^{2\beta_H}(x){\sum_{i=0}^\infty}
\E\bigl[\Gamma_{i+1}^{2}\bigr] \xi_i^{2\alpha_w+2\alpha_H+2\beta_H\alpha
_V}
,
\end{eqnarray*}
where the sum converges by
Condition
\ref{cond:general-noise-theorem-cond}\ref{eq:martingale-suff}.

Consider then \eqref{eq:noise-negligible-poisson}, and
denote the partial sums for $n\ge m\ge1$ as
\[
A_{m,n} := \sum_{i=m}^{n}
\Gamma_{i+1} \bigl\langle\nabla w(\theta_{i}),\bar{H}(
\theta_{i},X_{i+1}) \bigr\rangle.
\]
Since $\bar{H}(\theta_i,X_{i+1}) = g_{\theta_i}(X_{i+1})-P_{\theta_i}
g_{\theta_i}(X_{i+1})$, we may write
\begin{eqnarray*}
&&\Gamma_{i+1} \bigl\langle\nabla w(\theta_{i}),\bar{H}(
\theta_{i},X_{i+1}) \bigr\rangle \\
&&\quad  = \Gamma_{i+1}
\bigl\langle\nabla w(\theta_{i}), g_{\theta_{i}}(X_{i+1})
-P_{\theta_{i}}g_{\theta_{i}}(X_{i}) \bigr\rangle
\\
&&\qquad {} +\Gamma_{i+1} \bigl\langle\nabla w(\theta_{i}),
P_{\theta_{i}}g_{\theta_{i}}(X_{i}) -P_{\theta_{i-1}}g_{\theta_{i-1}}(X_{i})
\bigr\rangle
\\
&&\qquad {} +\Gamma_{i+1} \bigl\langle\nabla w(\theta_{i}),
P_{\theta_{i-1}}g_{\theta_{i-1}}(X_{i}) -P_{\theta_{i}}g_{\theta_{i}}(X_{i+1})
\bigr\rangle,
\end{eqnarray*}
where the last term can be written as
\begin{eqnarray*}
&&\Gamma_{i+1} \bigl\langle\nabla w(\theta_{i}),
P_{\theta_{i-1}}g_{\theta_{i-1}}(X_{i}) -P_{\theta_{i}}g_{\theta_{i}}(X_{i+1})
\bigr\rangle
\\
&&\quad =\Gamma_{i+1} \bigl\langle\nabla w(\theta_{i})-\nabla w(
\theta_{i-1}), P_{\theta_{i-1}}g_{\theta_{i-1}}(X_{i})
\bigr\rangle
\\
&&\qquad {}+\Gamma_{i} \bigl\langle\nabla w(\theta_{i-1}),
P_{\theta_{i-1}}g_{\theta_{i-1}}(X_{i}) \bigr\rangle -
\Gamma_{i+1} \bigl\langle\nabla w(\theta_{i}),
P_{\theta_{i}}g_{\theta_{i}}(X_{i+1}) \bigr\rangle
\\
&&\qquad {}+(\Gamma_{i+1}-\Gamma_i) \bigl\langle\nabla w(
\theta_{i-1}), P_{\theta_{i-1}}g_{\theta_{i-1}}(X_{i})
\bigr\rangle.
\end{eqnarray*}
When summing up, the middle term on the right is telescoping, so in total
we may write $A_{m,n} = \sum_{k=1}^5 R_{m,n}^{k}$ where
\begin{eqnarray*}
R_{m,n}^{1} & :=&\sum_{i=m}^{n}
\Gamma_{i+1} \bigl\langle\nabla w(\theta_{i}),
g_{\theta_{i}}(X_{i+1})-P_{\theta_{i}}g_{\theta_{i}}(X_{i})
\bigr\rangle,
\\
R_{m,n}^{2} & :=&\sum_{i=m}^{n}
\Gamma_{i+1} \bigl\langle\nabla w(\theta_{i}),
P_{\theta_{i}}g_{\theta_{i}}(X_{i}) -P_{\theta_{i-1}}g_{\theta_{i-1}}(X_{i})
\bigr\rangle,
\\
R_{m,n}^{3} & :=&\sum_{i=m}^{n}
\Gamma_{i+1} \bigl\langle\nabla w(\theta_{i})-\nabla w(
\theta_{i-1}), P_{\theta_{i-1}}g_{\theta_{i-1}}(X_{i})
\bigr\rangle,
\\
R_{m,n}^{4} & :=& \Gamma_{m} \bigl\langle\nabla
w(\theta_{m-1}), P_{\theta_{m-1}}g_{\theta_{m-1}}(X_{m})
\bigr\rangle - \Gamma_{n+1} \bigl\langle\nabla w(\theta_{n}),
P_{\theta_{n}}g_{\theta_{n}}(X_{n+1}) \bigr\rangle,
\\
R_{m,n}^{5} & :=&\sum_{i=m}^{n}(
\Gamma_{i+1}-\Gamma_{i}) \bigl\langle\nabla w(
\theta_{i-1}), P_{\theta_{i-1}}g_{\theta_{i-1}}(X_{i})
\bigr\rangle.
\end{eqnarray*}
We shall show that \eqref{eq:noise-negligible-poisson} holds for
each of these five terms in turn, which is sufficient to yield the claim.

Notice that $\{R_{m,i}^1\}_{i=m}^n$ is a martingale with respect to the
filtration
$\{\F_i\}_{i=m}^n$, whence
\begin{eqnarray*}
\E_{\theta,x} \bigl[\bigl|R_{m,n}^{1}\bigr|^{2}
\bigr] & =&\sum_{i=m}^{n} \E_{\theta,x}
\bigl[\Gamma_{i+1}^{2} \bigl| \bigl\langle\nabla w(
\theta_{i}), g_{\theta_{i}}(X_{i+1}) -P_{\theta_{i}}g_{\theta_{i}}(X_{i})
\bigr\rangle\bigr|^{2} \bigr]
\\
& \leq& C \sum_{i=m}^{n}
\xi_{i}^{2\alpha_w}\E \bigl[ \Gamma_{i+1}^{2}
\bigr] \E_{\theta,x} \bigl[ \bigl|g_{\theta_{i}}(X_{i+1})\bigr|^{2}
+\bigl|P_{\theta_{i}}g_{\theta_{i}}(X_{i})\bigr|^{2} \bigr]
\\
& \leq& C \sum_{i=m}^{n}
\xi_{i}^{2\alpha_w+2\alpha_g} \E \bigl[\Gamma_{i+1}^{2}
\bigr] \E_{\theta,x} \bigl[V^{2\beta_g}(X_{i+1})
+V^{2\beta_g}(X_{i}) \bigr]
\\
& \leq& CV^{2\beta_g}(x) \sum_{i=m}^{n}
\xi_{i+1}^{2\alpha_w+2\alpha_g+2\beta_g\alpha_V}\E\bigl[\Gamma_{i+1}^2
\bigr] ,
\end{eqnarray*}
by the fact that $\Gamma_{i+1}$ is independent of $\F_i$ and $X_{i+1}$,
Condition \ref{cond:w}\ref{item:xi},
Condition \ref{cond:general-noise-theorem-cond}\ref{eq:V-exp-bound} and
\ref{eq:poisson-bound}. Now,
Jensen's and Doob's inequality imply
\[
\Bigl(\E_{\theta,x} \Bigl[\sup_{n\ge m}\bigl|R_{m,n}^{1}\bigr|
\Bigr] \Bigr)^2 \le\E_{\theta,x} \Bigl[\sup_{n\ge m}\bigl|R_{m,n}^{1}\bigr|^{2}
\Bigr] \leq CV^{2\beta_g}(x)\sum_{i=m}^{\infty}
\xi_{i+1}^{2\alpha_w+2\alpha_g+2\beta_g\alpha_V}\E\bigl[\Gamma_{i+1}^2
\bigr] .
\]
This yields
$\lim_{m\rightarrow\infty}
\E_{\theta,x} [\sup_{n\ge m}|R_{m,n}^{1}| ] = 0$,
because the term on the right
tends to zero as $m\to\infty$
by Condition \ref{cond:general-noise-theorem-cond}\ref{eq:martingale-suff}.

For the second term $R_{m,n}^{2}$, we may simply write
\begin{eqnarray*}
\E_{\theta,x} \Bigl[ \sup_{n\geq m}\bigl|R_{m,n}^{2}\bigr|
\Bigr] &\le&\E_{\theta,x} \Biggl[ { \sum_{i=m}^{\infty}}
\bigl|\Gamma_{i+1} \bigl\langle\nabla w(\theta_{i}),
P_{\theta_{i}}g_{\theta_{i}}(X_{i})-P_{\theta_{i-1}}g_{\theta_{i-1}}
(X_{i}) \bigr\rangle \bigr| \Biggr]
\\
&\le& C\sum_{i=m}^{\infty}
\xi_i^{\alpha_w} \E[\Gamma_{i+1}] \E_{\theta,x}
\bigl[\bigl| P_{\theta_{i}}g_{\theta_{i}}(X_{i}) -P_{\theta_{i-1}}g_{\theta_{i-1}}(X_{i})
\bigr| \bigr] ,
\end{eqnarray*}
which converges to zero as $m\to\infty$ by
Condition \ref{cond:general-noise-theorem-cond}\ref{eq:contin-sum}.

Now we inspect $R_{m,n}^{3}$.
First, since the Hessian is bounded as in
Condition \ref{cond:w}\ref{item:w-hessian}, we have
\begin{eqnarray*}
\bigl|\nabla w(\theta_{i})-\nabla w(\theta_{i-1})\bigr| &\leq&
C_{w}|\theta_{i}-\theta_{i-1}| \le
C_w\bigl|\theta_{i}^*-\theta_{i-1}\bigr| =
C_w \Gamma_{i} \bigl|H(\theta_{i-1},X_{i})\bigr|
\\
&\le& C_w \xi_{i}^{\alpha_H} \Gamma_{i}
V^{\beta_H}(X_{i}) ,
\end{eqnarray*}
and consequently
\begin{eqnarray*}
\E_{\theta,x} \Bigl[\sup_{n\ge m} \bigl| R_{m,n}^3
\bigr| \Bigr] &\le& C \sum_{i=m}^\infty \E[
\Gamma_{i+1}\Gamma_i] \xi_{i}^{\alpha_g+\alpha_H}
\E_{\theta,x}\bigl[V^{\beta_g+\beta_H}(X_i)\bigr]
\\
&\le& C V^{\beta_g+\beta_H}(x)\sum_{i=m}^\infty
\E[\Gamma_{i+1}\Gamma_i] \xi_{i}^{\alpha_g+\alpha_H+(\beta_g+\beta_H)\alpha_V}
,
\end{eqnarray*}
by Condition \ref{cond:general-noise-theorem-cond}\ref{eq:H-bound}, \ref{eq:V-exp-bound} and \ref{eq:poisson-bound}.
The claim follows for $R_{m,n}^3$ by
Condition \ref{cond:general-noise-theorem-cond}\ref{eq:prod-consecutive-weights}.

Let us then focus on $R_{m,n}^{4}$.
We have for any $i\geq m$
\begin{eqnarray*}
\bigl\llvert \Gamma_{i} \bigl\langle\nabla w(\theta_{i-1}),
P_{\theta_{i-1}}g_{\theta_{i-1}}(X_{i}) \bigr\rangle\bigr\rrvert &
\leq C\Gamma_{i}\xi_{i}^{\alpha_w+\alpha_g} V^{\beta_g}(X_i)
.
\end{eqnarray*}
Now we have
\begin{eqnarray*}
\E_{\theta,x} \Bigl[ \sup_{n\geq m}\bigl|R_{m,n}^{4}\bigr|^{2}
\Bigr] & \leq& C \sum_{i=m}^\infty
\xi_{i}^{2\alpha_w+2\alpha_g} \E\bigl[\Gamma_{i}^2
\bigr] \E_{\theta,x} \bigl[ V^{2\beta_g}(X_i) \bigr]
\\
&\le& C V^{2\beta_g}(x) \sum_{i=m}^\infty
\xi_{i}^{2\alpha_w+2\alpha
_g+2\beta_g\alpha_V} \E\bigl[\Gamma_{i}^2
\bigr] ,
\end{eqnarray*}
so \eqref{eq:noise-negligible-poisson} holds for $R_{m,n}^4$ by
Condition \ref{cond:general-noise-theorem-cond}\ref{eq:martingale-suff}.

We shall apply
Lemma~\ref{lemma:random-weights} below for the
last term $R_{m,n}^{5}$, with $Z_i:=\Gamma_i$
and
\[
B_{i-1}:= \bigl\langle\nabla w(\theta_{i-1}),
P_{\theta_{i-1}}g_{\theta_{i-1}}(X_{i}) \bigr\rangle \qquad \mbox{with }
|B_{i-1}| \le C \xi_{i-1}^{\alpha_w+\alpha_g}V^{\beta_g}(X_{i})
.
\]
By the independence of $\Gamma_{i+1}$ and $\Gamma_i$,
and because $\xi_{i+1}\ge\xi_i\ge\xi_{i-1}$,
we easily establish the required bounds
\begin{eqnarray*}
\sum_{i=1}^\infty \Var(
\Gamma_{i+1}-\Gamma_i)\E_{\theta,x}
\bigl[B_{i-1}^2\bigr] & \le& C V^{2\beta_g}(x) \sum
_{i=1}^\infty \E\bigl[\Gamma_{i}^2
\bigr]\xi_{i}^{2\alpha_w+2\alpha_g+2\beta_g\alpha_V} <\infty,
\\
\sum_{i=1}^\infty\bigl|\E[\Gamma_{i+1}-
\Gamma_i]\bigr| \E [|B_{i-1}| ] &\le& C V^{\beta_g}(x) \sum
_{i=1}^\infty\bigl|\E[\Gamma_{i+1}-
\Gamma_i]\bigr| \xi_{i}^{\alpha_w+\alpha_g+\beta_g\alpha_V} < \infty,
\end{eqnarray*}
by Condition \ref{cond:general-noise-theorem-cond}\ref{eq:martingale-suff} and
\ref{eq:diff-consecutive-weights}, respectively.
\end{pf}
%
%
\begin{lemma}\label{lemma:random-weights} 
Let $\{\G_i\}_{i\ge0}$ be a filtration and for all $i\ge0$ let
$B_i$ and $Z_i$ be $\G_i$-adapted random variables
so that $Z_i$ is independent of $\G_{i-1}$ and
\[
\sum_{i=1}^\infty\Var(Z_{i+1}-Z_i)
\E\bigl[B_{i-1}^2\bigr] <\infty\quad \mbox{and}\quad  \sum
_{i=1}^\infty\bigl|\E[Z_{i+1}-Z_i]\bigr|
\E [|B_{i-1}| ] <\infty.
\]
Then,
\[
\lim_{m\to\infty} \E \Biggl[\sup_{n\ge m} \Biggl|\sum
_{i=m}^n (Z_{i+1}-Z_i)B_{i-1}
\Biggr| \Biggr] = 0 .
\]
\end{lemma}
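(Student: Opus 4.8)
The plan is to split each random increment into two centred pieces and a deterministic mean, treating the centred pieces as martingale tails and the mean part by absolute summability. Writing
\[
Z_{i+1}-Z_i = (Z_{i+1}-\E Z_{i+1}) - (Z_i-\E Z_i) + \E[Z_{i+1}-Z_i],
\]
I set $S^A_{m,n}\defeq\sum_{i=m}^n(Z_{i+1}-\E Z_{i+1})B_{i-1}$, $S^B_{m,n}\defeq\sum_{i=m}^n(Z_i-\E Z_i)B_{i-1}$ and $S^C_{m,n}\defeq\sum_{i=m}^n\E[Z_{i+1}-Z_i]B_{i-1}$, so that the sum to be controlled equals $S^A_{m,n}-S^B_{m,n}+S^C_{m,n}$, and I bound each piece by the triangle inequality. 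Underpinning everything is one elementary observation: since $Z_{i+1}$ is independent of $\G_i$ while $Z_i$ and $B_{i-1}$ are $\G_i$-measurable ($B_{i-1}$ even $\G_{i-1}$-measurable), $Z_{i+1}$ is independent of $(Z_i,B_{i-1})$. In particular $Z_{i+1}$ and $Z_i$ are independent, so $\Var(Z_{i+1}-Z_i)=\Var(Z_{i+1})+\Var(Z_i)$ and hence both $\Var(Z_{i+1})$ and $\Var(Z_i)$ are dominated by $\Var(Z_{i+1}-Z_i)$.

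The mean part is immediate: $\E[\sup_{n\ge m}|S^C_{m,n}|]\le\sum_{i=m}^\infty|\E[Z_{i+1}-Z_i]|\,\E[|B_{i-1}|]$, the tail of the second convergent series, which vanishes as $m\to\infty$. For $S^A$ I would verify that the summands $(Z_{i+1}-\E Z_{i+1})B_{i-1}$ are martingale differences with respect to $\{\G_{i+1}\}$: they are $\G_{i+1}$-measurable and $\E[(Z_{i+1}-\E Z_{i+1})B_{i-1}\mid\G_i]=B_{i-1}\,\E[Z_{i+1}-\E Z_{i+1}\mid\G_i]=0$ because $Z_{i+1}\perp\G_i$. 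Doob's $L^2$ maximal inequality applied to the tail martingale then gives
\[
\E\Big[\sup_{n\ge m}|S^A_{m,n}|^2\Big]\le 4\sum_{i=m}^\infty\E\big[(Z_{i+1}-\E Z_{i+1})^2B_{i-1}^2\big]=4\sum_{i=m}^\infty\Var(Z_{i+1})\,\E[B_{i-1}^2],
\]
using $Z_{i+1}\perp B_{i-1}$ in the last equality; by the variance domination this is at most $4\sum_{i\ge m}\Var(Z_{i+1}-Z_i)\E[B_{i-1}^2]$, the tail of the first convergent series, and Jensen's inequality transfers the conclusion to $\E[\sup_{n\ge m}|S^A_{m,n}|]$.

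The term $S^B$ is handled identically, now with $(Z_i-\E Z_i)B_{i-1}$ a martingale difference with respect to $\{\G_i\}$ (here $\E[\,\cdot\mid\G_{i-1}]=0$ since $Z_i\perp\G_{i-1}$) and second moment $\Var(Z_i)\E[B_{i-1}^2]\le\Var(Z_{i+1}-Z_i)\E[B_{i-1}^2]$. Summing the three vanishing bounds through the triangle inequality yields $\lim_{m\to\infty}\E[\sup_{n\ge m}|\sum_{i=m}^n(Z_{i+1}-Z_i)B_{i-1}|]=0$, as claimed.

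The one genuinely delicate point, which dictates the whole decomposition, is that the lumped increments $(Z_{i+1}-Z_i)B_{i-1}$ cannot be treated as orthogonal martingale differences directly: conditioning on $\G_i$ leaves the random quantity $\E[Z_{i+1}-Z_i\mid\G_i]=\E Z_{i+1}-Z_i$, and consecutive lumped terms fail to be orthogonal precisely through the shared variable $Z_{i+1}$. Peeling off the two centred pieces, each a bona fide martingale with respect to its own shifted filtration (the $A$-sum adapted to $\{\G_{i+1}\}$, the $B$-sum to $\{\G_i\}$), is exactly what removes this correlation, while the independence $Z_{i+1}\perp Z_i$ is what makes the single hypothesis on $\Var(Z_{i+1}-Z_i)$ strong enough to control both pieces. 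I expect checking these martingale and orthogonality properties with the correct index shifts to be the only spot requiring care.
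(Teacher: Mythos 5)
Your proof is correct, and it takes a genuinely different route from the paper's. The paper keeps the lumped increments $(Z_{i+1}-Z_i)B_{i-1}$ intact and instead splits the \emph{index set} into odd and even positions: within each sub-sum the increments $Z_{2j+1}-Z_{2j}$ involve disjoint blocks of the $Z$'s, so after centring at their (unconditional) means they become martingale differences with respect to the skipped filtration $\bar{\G}_j\defeq\G_{2j+1}$, and Doob plus absolute summability of the mean part finishes exactly as in your argument — with the advantage that the variance $\Var(Z_{2j+1}-Z_{2j})$ appearing in the Doob bound is literally the quantity in the hypothesis, with no further manipulation. You instead split each \emph{increment} into its two centred constituents plus its mean, getting two honest martingales on the shifted filtrations $\{\G_{i+1}\}$ and $\{\G_i\}$ plus a deterministic-coefficient sum; the price is the extra step $\Var(Z_{i+1}-Z_i)=\Var(Z_{i+1})+\Var(Z_i)$, which you correctly justify via $Z_{i+1}\perp\G_i\ni Z_i$, and which is what lets the single hypothesis on $\Var(Z_{i+1}-Z_i)$ control both martingale pieces. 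Your closing diagnosis of why the lumped terms cannot be treated directly as orthogonal martingale differences (the shared $Z_{i+1}$) is precisely the obstruction both decompositions are designed to remove; the two fixes are equally valid and of essentially the same length.
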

%
%
\begin{pf} 
Suppose for now that
$m$ is even and $n$ odd and denote $m=2\bar{m}$ and $n=2\bar{n}+1$.
Write the sum
%
\begin{equation}\label{eq:odd-even}
\sum_{i=m}^{n} (Z_{i+1}-Z_i)B_{i-1}
= \sum_{j=\bar{m}}^{\bar{n}} (Z_{2j+1}-Z_{2j})B_{2j-1}
+ \sum_{k=\bar{m}}^{\bar{n}} (Z_{2k+2}-Z_{2k+1})B_{2k}
.
\end{equation}
We shall first show that the claim holds for the first term on the right.
Denote $\bar{\G}_j = \G_{2j+1}$, $\bar{Z}_j=Z_{2j+1}-Z_{2j}$
and $\bar{B}_{j-1}=B_{2j-1}$. Observe that
$\E[\bar{Z}_j\mid\bar{\G}_{j-1}] = \E[\bar{Z}_j]$
and write
\[
\sum_{j=\bar{m}}^{\bar{n}} (Z_{2j+1}-Z_{2j})B_{2j-1}
= \sum_{j=\bar{m}}^{\bar{n}} \bigl(
\bar{Z}_{j}-\E[\bar{Z}_j] \bigr) \bar{B}_{j-1}
+ \sum_{j=\bar{m}}^{\bar{n}} \E[\bar{Z}_j]
\bar{B}_{j-1} .
\]
Now, the first term on the right-hand side is a martingale with respect to
$\bar{\G}_j$, and so by Doob's inequality and by assumption
\begin{eqnarray*}
\E \Biggl[\sup_{\bar{n}\ge\bar{m}} \Biggl(\sum_{j=\bar{m}}^{\bar{n}}
\bigl(\bar{Z}_{j}-\E[\bar{Z}_j] \bigr)
\bar{B}_{j-1} \Biggr)^2 \Biggr] \le4\sum
_{j=\bar{m}}^{\infty} \Var(\bar{Z}_j)\E\bigl[
\bar{B}_{j-1}^2\bigr] \mathop{\xrightarrow}^{\bar{m}\to\infty} 0 .
\end{eqnarray*}
For the second term, by assumption
\[
\E \Biggl[\sup_{\bar{n}\ge\bar{m}}\Biggl |\sum_{j=\bar{m}}^{\bar{n}}
\E[\bar{Z}_j]\bar{B}_{j-1} \Biggr| \Biggr] \le\sum
_{j=\bar{m}}^{\infty} \bigl|\E[\bar{Z}_j]\bigr|\E \bigl[|
\bar{B}_{j-1}| \bigr] \mathop{\xrightarrow}^{\bar{m}\to\infty} 0 .
\]
The same arguments apply also
for the second term on the right-hand side of
\eqref{eq:odd-even}, and for any integers $m\ge n\ge1$,
by a change of the indices.
\end{pf}
%


\subsection{Geometrically ergodic Markov kernels}\label{sec:geometric} 

In this section, we focus on the scenario where for any
$\theta\in\Theta$ the kernel $P_\theta$ is geometrically ergodic.
This condition is satisfied by numerous Markov chains of practical
interest, see for example,
\cite{mengersen-tweedie,fort-moulines-roberts-rosenthal,jarner-hansen}
and references therein. This section gathers together standard results
about the regularity of the solutions to the Poisson equation (see,
e.g., \cite{andrieu-moulines,sa-verifiable}).

Throughout this section, suppose $V\dvtx \mathsf{X}\to[1,\infty)$ is a
fixed measurable function. We shall denote
the $V$-norm of a measurable function $f\dvtx \mathsf{X}\to\R^d$ by $\|f\|_V
:=\sup_x |f(x)|/V(x)$. We also assume that for each
$\theta\in\hat{\Theta}$, the Markov kernel $P_\theta$ admits a
unique invariant probability measure $\pi_\theta$.
%
\begin{condition}\label{cond:geom-erg} 
For any $r\in(0,1]$ and any
$\theta\in\hat{\Theta}$, there exist
constants
$M_{\theta,r}\in[0,\infty)$ and $\rho_{\theta,r}\in(0,1)$,
such that for any function $\|f\|_{V^r}<\infty$
\[
\bigl| P_{\theta}^{k}(x,f)-\pi_\theta(f)\bigr| \leq
V^{r}(x) \|f\|_{V^r} M_{\theta,r} \rho_{\theta,r}^{k}
\]
for all $k\ge0$ and all $x\in\mathsf{X}$.
\end{condition}
%
Having Condition \ref{cond:geom-erg} one can bound the $V^r$-norm of
the solutions of the Poisson equation, making the dependence on
$\theta$ explicit. This result is a restatement of \cite{andrieu-moulines}, Proposition~3, in quantitative form; we provide it here for
the reader's convenience.
%
\begin{proposition}\label{prop:poisson-bound} 
Assume Condition \ref{cond:geom-erg} holds.
Then, for any
function $\|f\|_{V^r}<\infty$, the functions
$g_\theta\dvtx \mathsf{X}\to\R^d$ defined for all $\theta\in\hat
{\Theta}$
by
\[
g_\theta(x) :={\sum_{k=0}^\infty}
\bigl[P_\theta^k f(x) - \pi_\theta(f) \bigr]
\]
exist, solve the Poisson equation $g_\theta(x)-P_\theta
g_\theta(x) \equiv f(x) - \pi_\theta(f)$, and
satisfy the bound
%
\begin{equation}\label{eq:poisson-bound-geom}
\|g_\theta\|_{V^r}\vee\|P_\theta g_\theta
\|_{V^r} \le M_{\theta,r} (1-\rho_{\theta,r})^{-1}
\|f\|_{V^r} .
\end{equation}
\end{proposition}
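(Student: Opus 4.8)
The plan is to establish the three assertions of Proposition~\ref{prop:poisson-bound}---existence, the Poisson identity, and the norm bound \eqref{eq:poisson-bound-geom}---by showing that the defining series converges absolutely in $V^r$-norm, using Condition~\ref{cond:geom-erg} as the sole ingredient. First I would fix $r\in(0,1]$, $\theta\in\hat{\Theta}$, and a function $f$ with $\|f\|_{V^r}<\infty$, and observe that each summand $P_\theta^k f(x)-\pi_\theta(f)$ is precisely the quantity controlled by the hypothesis. Applying Condition~\ref{cond:geom-erg} termwise gives, for every $x\in\mathsf{X}$ and every $k\ge 0$,
\[
    |P_\theta^k f(x) - \pi_\theta(f)| \le V^r(x)\,\|f\|_{V^r}\,M_{\theta,r}\,\rho_{\theta,r}^k\ .
\]
Since $\rho_{\theta,r}\in(0,1)$, summing this geometric bound over $k$ shows the series defining $g_\theta(x)$ converges absolutely and pointwise, with
\[
    |g_\theta(x)| \le V^r(x)\,\|f\|_{V^r}\,M_{\theta,r}\sum_{k=0}^\infty \rho_{\theta,r}^k
    = V^r(x)\,\|f\|_{V^r}\,M_{\theta,r}\,(1-\rho_{\theta,r})^{-1}\ .
\]
Dividing by $V^r(x)$ and taking the supremum over $x$ yields $\|g_\theta\|_{V^r}\le M_{\theta,r}(1-\rho_{\theta,r})^{-1}\|f\|_{V^r}$, which is half of \eqref{eq:poisson-bound-geom}.

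Next I would verify the Poisson identity. The natural approach is to apply $P_\theta$ term-by-term to the series and exploit the telescoping structure: writing $P_\theta g_\theta(x) = \sum_{k=0}^\infty [P_\theta^{k+1} f(x) - \pi_\theta(f)]$, the difference $g_\theta(x) - P_\theta g_\theta(x)$ collapses to the $k=0$ term $f(x)-\pi_\theta(f)$. The only care needed here is to justify interchanging $P_\theta$ with the infinite sum, which follows from the dominated convergence theorem applied to the kernel $P_\theta$, using the integrable dominating function furnished by the geometric bound above together with $P_\theta V^r(x)\le$ a constant multiple of $V^r(x)$ (a standard consequence of geometric ergodicity, or directly from Condition~\ref{cond:geom-erg} with $k=1$ and $f=V^r$). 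This same computation simultaneously bounds $\|P_\theta g_\theta\|_{V^r}$: since $P_\theta g_\theta = g_\theta - (f-\pi_\theta(f))$, or alternatively by applying the geometric estimate to the shifted series $\sum_{k\ge 1}$, one obtains the matching bound $M_{\theta,r}(1-\rho_{\theta,r})^{-1}\|f\|_{V^r}$, completing \eqref{eq:poisson-bound-geom} via the $\vee$.

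The main obstacle, though a mild one, is the justification of passing $P_\theta$ inside the infinite sum when proving the Poisson identity; everything else reduces to summing a geometric series against the explicit rate $\rho_{\theta,r}$. I would handle this interchange carefully via dominated convergence, noting that the partial sums $\sum_{k=0}^{N}[P_\theta^k f - \pi_\theta(f)]$ converge in $V^r$-norm (hence pointwise after multiplication by the integrable envelope $V^r$) and that $P_\theta$ is a positive contraction in the appropriate weighted sense, so that $P_\theta$ of the limit equals the limit of $P_\theta$ applied to the partial sums. Since this result is quoted from \cite{andrieu-moulines}, I would keep the argument brief, emphasising that the explicit appearance of $M_{\theta,r}$ and $(1-\rho_{\theta,r})^{-1}$ in the bound is exactly what makes the $\theta$-dependence trackable in the sequel, which is the reason for restating it.
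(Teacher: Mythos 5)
Your proof is correct and follows essentially the same route as the paper: apply Condition \ref{cond:geom-erg} termwise, sum the geometric series to get existence and the bound on $\|g_\theta\|_{V^r}$, observe the telescoping structure for the Poisson identity, and bound $P_\theta g_\theta$ via the shifted series. The paper simply treats the interchange of $P_\theta$ with the sum as evident, whereas you spell out the dominated-convergence justification; this is a harmless (and arguably welcome) extra level of detail.
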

%
%
\begin{pf} 
It is evident that $g_\theta$ solves the Poisson equation whenever the
sum converges. By the definition of $g_\theta$ and Condition
\ref{cond:geom-erg}, we have
\[
\|g_\theta\|_{V^r} \le\sum_{k=0}^\infty
\bigl\| P_\theta^k f - \pi_\theta(f)\bigr\|_{V^r}
\le M_{\theta,r} \|f\|_{V^r} \sum_{k=0}^\infty
\rho_{\theta,r}^k = M_{\theta,r} (1-\rho_{\theta,r})^{-1}
\|f\|_{V^r} .
\]
The same bound applies clearly also for $P_\theta g_\theta$,
establishing \eqref{eq:poisson-bound-geom}.
\end{pf}
%

We also need the following simple lemma in order to establish
Condition \ref{cond:general-noise-theorem-cond}\ref{eq:V-exp-bound}.
%
\begin{lemma}\label{lemma:drift} 
Suppose that for all $i\ge0$ there exist constants
$\lambda_i\in[0,1)$ and $b_i\in[0,\infty)$ such that
%
\begin{equation}\label{eq:drift-bound}
\sup_{\theta\in\mathcal{R}_i} P_{\theta} V(x) \le\lambda_i V(x) +
b_i \qquad \mbox{for all } x\in\mathsf{X},
\end{equation}
and that both $(\lambda_i)_{i\ge0}$ and $(b_i)_{i\ge0}$
are non-decreasing.
Then, for any $(\theta,x)\in\mathcal{R}_0\times\mathsf{X}$
and $i\ge0$,
the bound
$\E_{\theta,x}[ V(X_{i+1})] \le(1-\lambda_{i})^{-1} (b_{i} \vee
V(x) )$
holds.
\end{lemma}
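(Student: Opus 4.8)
The plan is to convert the uniform drift bound \eqref{eq:drift-bound} into a one-step recursion for the expectations $v_i \defeq \E_{\theta,x}[V(X_i)]$, and then to iterate it, exploiting the monotonicity of $(\lambda_i)_{i\ge 0}$ and $(b_i)_{i\ge 0}$ to collapse the iteration into a clean geometric bound.

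First I would record the structural fact that, by the construction in Algorithm \ref{alg:saep}, the iterate $\theta_i$ lies in $\mathcal{R}_i$ for every $i\ge 0$: the starting point satisfies $\theta_0\in\mathcal{R}_0$, and at each step $\theta_{i+1}$ equals either $\theta_{i+1}^{\ast}$ when it falls in $\mathcal{R}_{i+1}$, or the projected value $\theta_{i+1}^{\text{proj}}\in\mathcal{R}_{i+1}$. Since $X_{i+1}\mid\F_i\sim P_{\theta_i}(X_i,\uarg)$ and $\theta_i$ is $\F_i$-measurable with $\theta_i\in\mathcal{R}_i$, the tower property together with \eqref{eq:drift-bound} gives, $\P_{\theta,x}$-almost surely,
\[
\E_{\theta,x}[V(X_{i+1})\mid \F_i]
= P_{\theta_i}V(X_i)
\le \sup_{\theta\in\mathcal{R}_i} P_\theta V(X_i)
\le \lambda_i V(X_i)+b_i\ .
\]
Taking expectations yields the recursion $v_{i+1}\le \lambda_i v_i + b_i$, with boundary value $v_0 = V(x)$ because $X_0\equiv x$.

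Next I would fix $i\ge 0$, abbreviate $m_i\defeq b_i\vee V(x)$, and prove by induction on $j\in\{0,1,\dots,i+1\}$ that $v_j\le (1-\lambda_i)^{-1}m_i$. The base case $v_0 = V(x)\le m_i\le (1-\lambda_i)^{-1}m_i$ holds since $(1-\lambda_i)^{-1}\ge 1$. For the inductive step, when $j\le i$ monotonicity gives $\lambda_j\le\lambda_i$ and $b_j\le b_i$, so from the recursion, the inductive hypothesis and $v_j\ge 0$,
\[
v_{j+1}\le \lambda_j v_j + b_j
\le \lambda_i v_j + b_i
\le \frac{\lambda_i}{1-\lambda_i}\,m_i + m_i
= \frac{1}{1-\lambda_i}\,m_i\ ,
\]
using $b_i\le m_i$. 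Taking $j=i$ gives exactly the asserted bound for $v_{i+1}=\E_{\theta,x}[V(X_{i+1})]$.

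There is no genuine obstacle here; the only point requiring care is the first step, namely that the drift inequality may be applied at the random index because $\theta_i\in\mathcal{R}_i$ almost surely, which is precisely what the expanding-projection mechanism guarantees. The monotonicity of the two sequences is then exactly what is needed to replace every pair $\lambda_j,b_j$ (for $j\le i$) by the common worst case $\lambda_i,b_i$, turning what would otherwise be a product of distinct contraction factors and a weighted sum of the $b_j$ into the single geometric bound above.
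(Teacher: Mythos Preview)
Your proof is correct and follows essentially the same approach as the paper: both use $\theta_j\in\mathcal{R}_j$ to obtain the one-step recursion $v_{j+1}\le\lambda_j v_j+b_j$, then exploit monotonicity to replace every $\lambda_j,b_j$ with $\lambda_i,b_i$. The only cosmetic difference is that the paper unrolls the recursion into the geometric sum $(b_i\vee V(x))\sum_{k=0}^{i}\lambda_i^k\le(1-\lambda_i)^{-1}(b_i\vee V(x))$, whereas you prove the final bound directly by induction; the content is the same.
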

%
%
\begin{pf} 
By construction, for all $i\ge1$ we have
$\E_{\theta,x}[V(X_i)\mid\F_{i-1}] =
P_{\theta_{i-1}} V(X_{i-1})$ and $\theta_{i-1}\in\mathcal
{R}_{i-1}$, so we may
use \eqref{eq:drift-bound} iteratively to obtain
\[
\E_{\theta,x}\bigl[ V(X_{i+1})\bigr] \le\E_{\theta,x}\bigl[
\lambda_{i} V(X_{i}) + b_{i}\bigr] \le\cdots \le
\bigl(b_{i}\vee V(x) \bigr)\sum_{k=0}^{i}
\lambda_{i}^k \le\frac{b_i \vee V(x)}{1-\lambda_i} .
\]\upqed
\end{pf}
%

Let us consider next a case where the ergodicity rates
in each projection set $\mathcal{R}_i$ are controlled
by the sequence $\xi_i$.
%
\begin{condition}\label{cond:geom-rates} 
Suppose Condition \ref{cond:geom-erg} holds with constants
$M_{\theta,r},\rho_{\theta,r}$ satisfying
\[
\sup_{\theta\in\mathcal{R}_i} M_{\theta,r} \le c_r\xi_i^{\alpha_M}
\quad \mbox{and}\quad  \sup_{\theta\in\mathcal{R}_i} (1-\rho_{\theta,r})^{-1} \le
c_r\xi_i^{\alpha_\rho}
\]
for some constants $\alpha_M,\alpha_\rho\in[0,\infty)$, and
a constant $c_r\in[0,\infty)$ depending only on $r$.
\end{condition}
%
%
\begin{proposition}\label{prop:geom-rates} 
If Condition \ref{cond:geom-rates} holds,
then
Condition \ref{cond:general-noise-theorem-cond}\textup{\ref{eq:poisson-bound}} holds with $\alpha_g = \alpha_H+
\alpha_M +\alpha_\rho$ and $\beta_g=\beta_H$.
\end{proposition}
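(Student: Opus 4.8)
The plan is to read \eqref{eq:poisson-bound} off Proposition \ref{prop:poisson-bound}, applied for each fixed $\theta\in\hat{\Theta}$ to the forcing term $f=H(\theta,\uarg)$, with the single choice of exponent $r=\beta_H$. The first observation I would make is that the growth bound in Condition \ref{cond:general-noise-theorem-cond}~\eqref{eq:H-bound} is exactly a statement about a weighted sup-norm: since $|H(\theta,x)|\le c\xi_i^{\alpha_H}V^{\beta_H}(x)$ for every $\theta\in\mathcal{R}_i$ and every $x$, we have $\|H(\theta,\uarg)\|_{V^{\beta_H}}\le c\xi_i^{\alpha_H}$ uniformly over $\theta\in\mathcal{R}_i$. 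In particular $\|H(\theta,\uarg)\|_{V^{\beta_H}}<\infty$, which is precisely the hypothesis needed to invoke Proposition \ref{prop:poisson-bound} with $r=\beta_H$; this simultaneously establishes the existence of the Poisson solution $g_\theta$ asserted in Condition \ref{cond:general-noise-theorem-cond}, and identifies its defining equation $g_\theta-P_\theta g_\theta\equiv H(\theta,\uarg)-\pi_\theta\big(H(\theta,\uarg)\big)=\bar{H}(\theta,\uarg)$ with the one in the statement.

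With this identification, Proposition \ref{prop:poisson-bound} gives, for each $\theta\in\hat{\Theta}$,
\[
\|g_\theta\|_{V^{\beta_H}}\vee\|P_\theta g_\theta\|_{V^{\beta_H}}
\le M_{\theta,\beta_H}\,(1-\rho_{\theta,\beta_H})^{-1}\,\|H(\theta,\uarg)\|_{V^{\beta_H}}\ .
\]
The remaining step is to control the three factors on the right uniformly over $\theta\in\mathcal{R}_i$. The two ergodicity constants are handled by Condition \ref{cond:geom-rates}, which bounds $M_{\theta,\beta_H}\le c_{\beta_H}\xi_i^{\alpha_M}$ and $(1-\rho_{\theta,\beta_H})^{-1}\le c_{\beta_H}\xi_i^{\alpha_\rho}$, while the forcing term is bounded by $c\xi_i^{\alpha_H}$ as just noted. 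Multiplying these and absorbing the numerical factor $c_{\beta_H}^2 c$ into a single constant $C$, I obtain $\|g_\theta\|_{V^{\beta_H}}\vee\|P_\theta g_\theta\|_{V^{\beta_H}}\le C\xi_i^{\alpha_H+\alpha_M+\alpha_\rho}$ for all $\theta\in\mathcal{R}_i$.

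Unpacking this weighted-norm estimate back into a pointwise bound then yields, for every $\theta\in\mathcal{R}_i$ and every $x$, both $|g_\theta(x)|\le C\xi_i^{\alpha_H+\alpha_M+\alpha_\rho}V^{\beta_H}(x)$ and the same bound for $|P_\theta g_\theta(x)|$; summing the two and renaming the constant delivers \eqref{eq:poisson-bound} with precisely $\alpha_g=\alpha_H+\alpha_M+\alpha_\rho$ and $\beta_g=\beta_H$. There is no genuine obstacle here beyond bookkeeping of the powers of $\xi_i$: the only point that needs care is the choice $r=\beta_H$, which is what forces $\beta_g=\beta_H$ and keeps all powers of $V$ matched throughout the estimate. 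The substantive case is $\beta_H\in(0,1/2]$, which lies within the range $r\in(0,1]$ covered by Condition \ref{cond:geom-erg}.
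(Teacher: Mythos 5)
Your proof is correct and is essentially the argument the paper intends: the paper's own proof is the one-line remark that the claim is a corollary of Proposition \ref{prop:poisson-bound} with $r=\beta_g$, and you have simply spelled out the bookkeeping (reading Condition \ref{cond:general-noise-theorem-cond}~\eqref{eq:H-bound} as a $V^{\beta_H}$-norm bound, then combining it with the bounds on $M_{\theta,\beta_H}$ and $(1-\rho_{\theta,\beta_H})^{-1}$ from Condition \ref{cond:geom-rates}). Nothing further is needed.
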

%
%
\begin{pf} 
Corollary of Proposition~\ref{prop:poisson-bound} with $r=\beta_g$.
\end{pf}
%

Finally, we shall state a result similar to \cite{saksman-vihola}, Lemma~3, yielding
Condition \ref{cond:geom-erg} from simultaneous, but $\theta$-dependent,
drift and minorisation conditions. These conditions can be verified
for random-walk Metropolis kernels with a target distribution having
super-exponential tail decay and
sufficiently regular tail contours
\cite{jarner-hansen,andrieu-moulines,saksman-vihola,vihola-asm}.
%
\begin{condition}\label{cond:drift-mino} 
Suppose that $P$ is an irreducible and aperiodic
Markov kernel with invariant
distribution $\pi$, that
there exists a Borel set $C\subset\mathsf{X}$,
a probability measure $\nu$ concentrated on $C$,
constants $\lambda\in[0,1)$, $b<\infty$ and
$\delta\in(0,1]$ such that
$v :=\sup_{x\in C} V(x) <\infty$ and
\begin{eqnarray*}
P V(x)&\le&\lambda V(x) + b \mathbb{I}\{x\in C\}\qquad \mbox{for all } x\in\mathsf{X},
\\
P(x,A)&\ge&\delta\nu(A)  \qquad \mbox{for all } x\in C \mbox{ and any Borel set } A\subset
\mathsf{X}.
\end{eqnarray*}
\end{condition}
%
%
\begin{proposition}\label{prop:drift-to-erg} 
Assume Condition \ref{cond:drift-mino}.
Then, for any $r\in(0,1]$ there exists
a constant $c_r^*\in[1,\infty)$ depending
only on $r$ such that
for all $\|f\|_{V^r}<\infty$ and $k\geq1$
\[
\bigl\llVert P^{k}(x,f)-\pi(f)\bigr\rrVert_{V^r} \leq
V^r(x)M_r\rho_r^{k}\|f
\|_{V^r} ,
\]
where the constants $M_r\in[1,\infty)$ and $\rho_r\in(0,1)$
are defined in terms of the constants in Condition
\ref{cond:drift-mino} as follows
\begin{eqnarray*}
\rho_r &:=&1 - \bigl[c_r^* (1-\lambda)^{-4}
\delta^{-13} \bar{b}^{6} \bigr]^{-1},
\\
M_r &:=&c_r^*(1-\lambda)^{-4}
\delta^{-15}\bar{b}^{7} ,
\end{eqnarray*}
where $\bar{b}:=b\vee v\ge1$.
\end{proposition}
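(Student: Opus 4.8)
The plan is to reduce the general $r\in(0,1]$ case to a single geometric drift toward $W\defeq V^r$, then feed the resulting drift/minorisation pair into one computable geometric ergodicity bound and track the constants. The reduction rests on the concavity and subadditivity of $t\mapsto t^r$ for $r\le 1$: by Jensen's inequality $PW(x)\le (PV(x))^r$, so Condition \ref{cond:drift-mino} gives $PW(x)\le \lambda^r W(x)$ for $x\in C$ (where $PV\le\lambda V$) and $PW(x)\le(\lambda V(x)+b)^r\le\lambda^r W(x)+b^r$ for $x\notin C$ (using $(a+c)^r\le a^r+c^r$). Hence
\[
PW(x)\le \lambda^r W(x) + b^r\,\charfun{x\notin C}\qquad\text{for all }x\in\mathsf{X},
\]
while the minorisation $P(x,\cdot)\ge\delta\nu(\cdot)$ with $\nu$ supported on $C$ is untouched and $\sup_{x\in C}W(x)=v^r$. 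Thus $(P,W)$ satisfies exactly the same shape of hypothesis as $(P,V)$, but with drift coefficient $\lambda^r$, offset $b^r$, and level $v^r$ on $C$.

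Next I would apply a computable geometric ergodicity theorem (of Rosenthal/Baxendale type, built from the minorisation-induced split chain together with the drift controlling returns to $C$) to the pair $(P,W)$. Such a theorem produces a $W$-geometric rate $\bar\rho\in(0,1)$ and constant $\bar M<\infty$ given as explicit rational expressions in $(1-\lambda^r)^{-1}$, $\delta^{-1}$, and $\bar b'\defeq b^r\vee v^r$. Because applying it to $W=V^r$ yields precisely a bound of the form $\|P^k(x,f)-\pi(f)\|_{V^r}\le V^r(x)\bar M\bar\rho^k\|f\|_{V^r}$, the proposition's inequality is immediate once $\bar\rho,\bar M$ are rewritten in terms of $\lambda,b,v,\delta$.

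The bookkeeping is where the stated exponents appear. I would use the elementary bound $1-\lambda^r\ge r(1-\lambda)$ (monotonicity of $\lambda\mapsto 1-\lambda^r-r(1-\lambda)$ on $[0,1]$, which vanishes at $\lambda=1$), giving $(1-\lambda^r)^{-1}\le r^{-1}(1-\lambda)^{-1}$, together with the clean identity $\bar b'=b^r\vee v^r=(b\vee v)^r=\bar b^{\,r}$, so that any power $\bar b'^{\,p}=\bar b^{\,rp}\le\bar b^{\,p}$ since $\bar b\ge 1$ and $r\le1$. Substituting into the black-box expressions and collecting every $r$-dependent prefactor into a single constant $c_r^*$ depending only on $r$ reproduces the stated forms of $\rho_r$ and $M_r$; the differing powers $\delta^{-13}$ versus $\delta^{-15}$ and $\bar b^6$ versus $\bar b^7$ simply mirror the two distinct expressions ($\bar\rho$ and $\bar M$) delivered by the quantitative theorem.

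The main obstacle I anticipate is purely at the level of constants: matching the precise exponents $(1-\lambda)^{-4}$, $\delta^{-13}$, $\delta^{-15}$, $\bar b^6$, $\bar b^7$ requires that the quantitative bound used as a black box be stated in the $\charfun{x\notin C}$ drift form with $\nu$ concentrated on $C$, and that its $r$-dependence factor out cleanly (through $(1-\lambda^r)^{-1}$ and $\bar b^{\,r}$) rather than entangling with $\delta$. Should no off-the-shelf theorem give exactly these powers, I would instead build the split-chain regeneration directly from the minorisation, control the $V^r$-moments accumulated between successive regenerations and the geometric tails of the return times to $C$ via the drift, and assemble them through a renewal/coupling estimate --- a longer route, but one giving complete control over each exponent.
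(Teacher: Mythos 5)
Your strategy is essentially the paper's: reduce $r\in(0,1)$ to $r=1$ by applying Jensen's inequality to get a drift for $V^r$ with coefficient $\lambda^r$, use $1-\lambda^r\ge r(1-\lambda)$ and $b^r\vee v^r\le \bar b$ to absorb the $r$-dependence into $c_r^*$, and feed the drift/minorisation pair into a computable geometric-ergodicity bound. The paper uses precisely this route, with the black box instantiated as Meyn and Tweedie's Theorem 2.3 (restated as Theorem \ref{th:m-t-computable} in the appendix), and the specific exponents $(1-\lambda)^{-4}$, $\delta^{-13}$, $\delta^{-15}$, $\bar b^6$, $\bar b^7$ come from explicitly bounding that theorem's intermediate quantities: $\gamma\le 6\delta^{-2}\bar b$, $(1-\check\lambda)^{-1}\le 7\delta^{-2}\bar b(1-\lambda)^{-1}$, $\check b\le 7\delta^{-2}\bar b$, $\bar\zeta\le 4\delta^{-5}\bar b^2(1-\lambda)^{-2}$, whence $\tilde M\le 48020(1-\lambda)^{-4}\delta^{-13}\bar b^6$ and $M_1\le(1+\gamma)\,2\tilde M$. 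The one substantive omission in your write-up is that this constant-tracking --- which is the entire content of the proposition, since the qualitative statement is classical --- is deferred to an unspecified off-the-shelf theorem; you correctly anticipate this as the obstacle, and your fallback (a direct split-chain/regeneration argument) would also work, but as written the proof is not complete until that computation is carried out against a concrete quantitative theorem.
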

%
The proof of Proposition~\ref{prop:drift-to-erg}
is given in Appendix~\ref{sec:drift}.


\subsection{Smooth family of Markov kernels}\label{sec:continuous-kernels} 

In many practically interesting settings, the mapping $\theta\mapsto
P_\theta$, possibly restricted to a suitable set, satisfies a
H\"{o}lder continuity condition.
This continuity allows one to establish Condition
\ref{cond:general-noise-theorem-cond}\ref{eq:contin-sum} in a
natural way \cite{andrieu-moulines,sa-verifiable,benveniste-metivier-priouret}.
We restate these results in a quantitative manner below, so that they
are directly applicable in the present setting. The H\"{o}lder
continuity condition
is given as follows.
%
\begin{condition}\label{cond:continuity} 
Suppose Condition \ref{cond:geom-erg} holds
and for any $\theta,\theta'\in\hat{\Theta}$, there exist a constant
$D_{\theta,\theta' ,r}\in[0,\infty)$
and a constant $\beta_D\in(0,\infty)$ independent of
$\theta$, $\theta'$ and $r$
such that for any
function $\|f\|_{V^r}<\infty$
\[
\llVert P_{\theta}f-P_{\theta^{\prime}}f\rrVert_{V^r} \leq\llVert
f\rrVert_{V^r}D_{\theta,\theta' ,r} |\theta-\theta^{\prime}|^{\beta_D}
.
\]
\end{condition}
%

We consider below only the case when $P_\theta$ and $P_{\theta'}$
admit the same stationary measure;
this is a commonly encountered in
adaptive Markov chain Monte Carlo.
The general case is slightly more involved, but can be handled as well;
we refer the reader to \cite{sa-verifiable} for details.
We start by a lemma characterising the difference of the iterates of
the kernels.
%
\begin{lemma}\label{lemma:kernel-iterates} 
Assume Condition \ref{cond:continuity}
holds and $f$ is a measurable function with
$\|f\|_{V^r}<\infty$ and that $\pi_\theta= \pi_{\theta'}=:\pi$.
Then,
for any $k\ge0$
\begin{eqnarray*}
\bigl\| P_\theta^k f - P_{\theta'}^k f
\bigr\|_{V^r} \le M_{\theta,r} M_{\theta' ,r} D_{\theta,\theta' ,r} k (
\rho_{\theta,r}\vee\rho_{\theta' ,r})^{k-1} \bigl|\theta-
\theta'\bigr|^{\beta_D} \|f\|_{V^r} .
\end{eqnarray*}
\end{lemma}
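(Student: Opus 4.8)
The plan is to use the standard operator telescoping identity. For $k\ge 1$ I would write
\begin{equation*}
P_\theta^k f - P_{\theta'}^k f = \sum_{j=0}^{k-1} P_\theta^{k-1-j}(P_\theta - P_{\theta'})P_{\theta'}^j f\ ,
\end{equation*}
which holds because the summand at index $j$ equals $P_\theta^{k-j}P_{\theta'}^j f - P_\theta^{k-1-j}P_{\theta'}^{j+1}f$, so the sum collapses to $P_\theta^k f - P_{\theta'}^k f$. Each iterate $P_\theta^m f$ is $V^r$-finite by Condition \ref{cond:geom-erg} (applied with $k=1$), so the identity is meaningful. The goal is then to bound the $V^r$-norm of each of the $k$ summands uniformly by $M_{\theta,r}M_{\theta'\!,r}D_{\theta,\theta'\!,r}(\rho_{\theta,r}\vee\rho_{\theta'\!,r})^{k-1}|\theta-\theta'|^{\beta_D}\|f\|_{V^r}$ and sum.

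The key observation, and the step I expect to require the most care, is obtaining the final bound in terms of $\|f\|_{V^r}$ rather than some centred quantity. Since $P_\theta$ and $P_{\theta'}$ are Markov kernels they fix constants, so $(P_\theta - P_{\theta'})$ annihilates constants; using $\pi_\theta = \pi_{\theta'}\eqdef\pi$ I would rewrite
\begin{equation*}
(P_\theta - P_{\theta'})P_{\theta'}^j f = (P_\theta - P_{\theta'})\big(P_{\theta'}^j f - \pi(f)\big)\ .
\end{equation*}
Applying Condition \ref{cond:continuity} and then Condition \ref{cond:geom-erg} to $P_{\theta'}^j f - \pi(f)$ gives
\begin{equation*}
\big\|(P_\theta - P_{\theta'})P_{\theta'}^j f\big\|_{V^r} \le D_{\theta,\theta'\!,r}|\theta-\theta'|^{\beta_D}\,\|P_{\theta'}^j f - \pi(f)\|_{V^r} \le D_{\theta,\theta'\!,r}|\theta-\theta'|^{\beta_D}M_{\theta'\!,r}\rho_{\theta'\!,r}^j\|f\|_{V^r}\ .
\end{equation*}
The point is that Condition \ref{cond:geom-erg} bounds $\|P_{\theta'}^j f - \pi(f)\|_{V^r}$ directly in terms of $\|f\|_{V^r}$, so no spurious centring constant $|\pi(f)|$ survives.

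Writing $h_j \defeq (P_\theta - P_{\theta'})P_{\theta'}^j f$, this function satisfies $\pi(h_j)=0$ because $\pi$ is invariant for both kernels, so Condition \ref{cond:geom-erg} applied to $P_\theta$ yields $\|P_\theta^{k-1-j}h_j\|_{V^r}\le M_{\theta,r}\rho_{\theta,r}^{k-1-j}\|h_j\|_{V^r}$. Multiplying the two bounds and using $\rho_{\theta,r}^{k-1-j}\rho_{\theta'\!,r}^j \le (\rho_{\theta,r}\vee\rho_{\theta'\!,r})^{k-1}$ bounds each summand uniformly in $j$; summing the $k$ terms and invoking the triangle inequality for $\|\uarg\|_{V^r}$ then gives exactly the claimed estimate, while the case $k=0$ is trivial since both sides vanish. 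Everything apart from the constant-killing step is a routine composition of the two hypotheses, so I would present that step explicitly and keep the remaining estimates terse.
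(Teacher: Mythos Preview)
Your proposal is correct and follows essentially the same route as the paper: the same telescoping decomposition, the same use of the fact that $(P_\theta-P_{\theta'})$ annihilates constants (the paper writes this as replacing $P_\theta^{k-j}$ by $P_\theta^{k-j}-\Pi$, you phrase it as $\pi(h_j)=0$), and the same application of Condition~\ref{cond:geom-erg} on each side to extract the factors $M_{\theta,r}\rho_{\theta,r}^{k-1-j}$ and $M_{\theta'\!,r}\rho_{\theta'\!,r}^{j}$. The only cosmetic difference is that the paper bounds the sum by $k$ times the supremum of the summands rather than summing the identical bounds directly.
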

%
%
\begin{pf} 
We use the following telescoping decomposition
\begin{eqnarray*}
P_\theta^k f - P_{\theta'}^k f = \sum
_{j=1}^k P_\theta^{k-j}(P_\theta-P_{\theta'})P_{\theta'}^{j-1}
f = \sum_{j=1}^k \bigl(P_\theta^{k-j}-
\Pi\bigr) (P_\theta-P_{\theta'}) \bigl(P_{\theta'}^{j-1}f-
\pi(f) \bigr) ,
\end{eqnarray*}
where $\Pi(x,A) :=\pi(A)$ for all
$x\in\mathsf{X}$ and all measurable $A\subset\mathsf{X}$.

By Condition \ref{cond:geom-erg} and Condition \ref{cond:continuity},
\begin{eqnarray*}
\bigl\|(P_\theta-P_{\theta'}) \bigl(P_{\theta'}^{j-1}f-
\pi(f) \bigr)\bigr\|_{V^r} &\le& \bigl\| P_{\theta'}^{j-1}f-\pi(f)
\bigr\|_{V^r} D_{\theta,\theta' ,r} \bigl|\theta-\theta^{\prime}\bigr|^{\beta_D}
\\
& \le& D_{\theta,\theta' ,r} M_{\theta' ,r} \rho_{\theta' ,r}^{j-1}
\|f\|_{V^r} \bigl|\theta-\theta^{\prime}\bigr|^{\beta_D} .
\end{eqnarray*}
Writing then
\begin{eqnarray*}
\bigl\|P_\theta^k f - P_{\theta'}^k f
\bigr\|_{V^r} &\le k \displaystyle \sup_{1\le j\le k} \bigl\| \bigl(P_\theta^{k-j}-
\Pi\bigr) (P_\theta-P_{\theta'}) \bigl(P_{\theta'}^{j-1}f-
\pi(f) \bigr) \bigr\|_{V^r} ,
\end{eqnarray*}
and applying Condition \ref{cond:geom-erg} once more
yields the claim.
\end{pf}
%
%
\begin{proposition}\label{prop:geom-erg-continuous} 
Assume Condition \ref{cond:continuity}
holds,
$\pi_\theta=\pi_{\theta'}=:\pi$
and $\|f_\theta\|_{V^r}\vee\|f_{\theta'}\|_{V^r}<\infty$.
Then, the solutions of the Poisson equation defined as
$g_\theta:=\sum_{k=0}^\infty[P_\theta^k f_\theta-
\pi_\theta(f_\theta)]$ satisfy
%
\begin{eqnarray}\label{eq:poisson-continuity-geom}
\|g_\theta-g_{\theta'}\|_{V^r} \vee \|P_\theta
g_\theta-P_{\theta'} g_{\theta'}\|_{V^r} &\le&
\frac{M_{\theta,r} M_{\theta' ,r}
D_{\theta,\theta' ,r} }{
(1-(\rho_{\theta,r} \vee\rho_{\theta' ,r}) )^{2}} |\theta-\theta'|^{\beta_D}
\|f_\theta\|_{V^r}
\nonumber
\\[-8pt]\\[-8pt]
&&{} + M_{\theta' ,r} (1-\rho_{\theta' ,r})^{-1}
\|f_\theta-f_{\theta'}\|_{V^r} .\nonumber
\end{eqnarray}
\end{proposition}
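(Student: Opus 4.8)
The plan is to bound the two differences $g_\theta - g_{\theta'}$ and $P_\theta g_\theta - P_{\theta'}g_{\theta'}$ by splitting each into a term where only the kernel changes and a term where only the centred function $f$ changes. Writing $\bar f_\theta \defeq f_\theta - \pi(f_\theta)$ (using $\pi_\theta=\pi_{\theta'}=\pi$), the series for $g_\theta$ is $\sum_{k=0}^\infty P_\theta^k \bar f_\theta$, so I would first insert an intermediate term and decompose
\begin{equation*}
g_\theta - g_{\theta'}
= \sum_{k=0}^\infty \big(P_\theta^k - P_{\theta'}^k\big)\bar f_\theta
+ \sum_{k=0}^\infty P_{\theta'}^k \big(\bar f_\theta - \bar f_{\theta'}\big)\ .
\end{equation*}
The first sum isolates the dependence on the kernel and the second isolates the dependence on the function; this is exactly the split that produces the two summands on the right-hand side of \eqref{eq:poisson-continuity-geom}.

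For the first sum I would apply Lemma \ref{lemma:kernel-iterates} termwise, giving the $V^r$-norm bound $M_{\theta,r} M_{\theta'\!,r} D_{\theta,\theta'\!,r}\,k\,(\rho_{\theta,r}\vee\rho_{\theta'\!,r})^{k-1}|\theta-\theta'|^{\beta_D}\|f_\theta\|_{V^r}$. Summing over $k$ and using the standard identity $\sum_{k=1}^\infty k\rho^{k-1} = (1-\rho)^{-2}$ (with $\rho = \rho_{\theta,r}\vee\rho_{\theta'\!,r}$) yields precisely the first term, with $\beta_P=\beta_D$. For the second sum I would use Condition \ref{cond:geom-erg} termwise, $\|P_{\theta'}^k(\bar f_\theta - \bar f_{\theta'})\|_{V^r}\le M_{\theta'\!,r}\rho_{\theta'\!,r}^{k}\|f_\theta-f_{\theta'}\|_{V^r}$, noting that $\bar f_\theta - \bar f_{\theta'} = f_\theta - f_{\theta'} - \pi(f_\theta - f_{\theta'})$ has the same centred form; summing the geometric series gives the factor $M_{\theta'\!,r}(1-\rho_{\theta'\!,r})^{-1}$ and the second term. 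Adding the two bounds gives the claimed estimate for $\|g_\theta - g_{\theta'}\|_{V^r}$.

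For the $P_\theta g_\theta - P_{\theta'} g_{\theta'}$ difference I would note that $P_\theta g_\theta = \sum_{k=1}^\infty P_\theta^k \bar f_\theta = g_\theta - \bar f_\theta$, so an entirely analogous decomposition applies with the sums starting at $k=1$ instead of $k=0$. The kernel-difference sum is bounded by the same quantity (the $k=0$ term vanishes anyway since $P_\theta^0 = P_{\theta'}^0 = \mathrm{Id}$), and the function-difference sum is bounded by $M_{\theta'\!,r}(\rho_{\theta'\!,r} + \rho_{\theta'\!,r}^2 + \cdots)\|f_\theta - f_{\theta'}\|_{V^r} \le M_{\theta'\!,r}(1-\rho_{\theta'\!,r})^{-1}\|f_\theta-f_{\theta'}\|_{V^r}$, giving the same right-hand side and hence the maximum over the two expressions. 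The only mild subtlety I anticipate is bookkeeping the centring terms $\pi(f_\theta - f_{\theta'})$ and confirming that the convergence of all series is justified, which it is since $\rho_{\theta,r},\rho_{\theta'\!,r}\in(0,1)$ by Condition \ref{cond:geom-erg}; no single step is a genuine obstacle, as the heavy lifting has already been done in Lemma \ref{lemma:kernel-iterates}.
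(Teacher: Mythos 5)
Your proposal is correct and follows essentially the same route as the paper: the same decomposition of $g_\theta-g_{\theta'}$ into a kernel-difference sum (bounded termwise via Lemma \ref{lemma:kernel-iterates} and summed with $\sum_k k\rho^{k-1}=(1-\rho)^{-2}$) and a function-difference sum (bounded via Condition \ref{cond:geom-erg} as a geometric series), with the observation that the same bound carries over to $P_\theta g_\theta-P_{\theta'}g_{\theta'}$. Your identification of $\beta_P$ with $\beta_D$ is also the intended reading.
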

%
%
\begin{pf} 
With the estimate from Lemma~\ref{lemma:kernel-iterates},
\begin{eqnarray*}
\|g_\theta-g_{\theta'}\|_{V^r} &\le&\sum
_{k=0}^\infty \bigl( \bigl\|P_\theta^k
f_\theta- P_{\theta'}^k f_\theta
\bigr\|_{V^r} + \bigl\|P_{\theta'}^k(f_\theta-f_{\theta'})
- \pi(f_\theta- f_{\theta'})\bigr\|_{V^r} \bigr)
\\
&\le &M_{\theta,r} M_{\theta' ,r} D_{\theta,\theta' ,r}\bigl |\theta-
\theta'\bigr|^{\beta_D} \|f_\theta\|_{V^r} \sum
_{k=0}^\infty k (\rho_{\theta,r} \vee
\rho_{\theta' ,r})^{k-1}
\\
&&{}+ M_{\theta' ,r} (1-\rho_{\theta' ,r})^{-1}
\|f_\theta-f_{\theta'}\|_{V^r} .
\end{eqnarray*}
The same bound clearly holds also for
$\|P_\theta g_\theta- P_{\theta'}g_{\theta'}\|_{V^r}$
yielding \eqref{eq:poisson-continuity-geom}.
\end{pf}
%

We shall provide some sufficient conditions to verify
Condition \ref{cond:general-noise-theorem-cond}\ref{eq:contin-sum}.
%
\begin{condition}\label{cond:contin-poisson} 
Condition \ref{cond:continuity} holds with
constants satisfying
$\sup_{(\theta,\theta')\in\mathcal{R}_i^2} D_{\theta,\theta' ,r}
\le
c_r^D \xi_i^{\alpha_D}$ for some constant $c_r^D\in[0,\infty)$
depending only on
$r\in(0,1]$,
Condition \ref{cond:general-noise-theorem-cond}\textup{\ref{eq:H-bound}} and \textup{\ref{eq:V-exp-bound}}
hold with constants $\alpha_H,\beta_H$ and $\alpha_V$, and
there exist constants $c<\infty$, $\alpha_{\Delta}\in[0,\infty)$
and $\beta_{\Delta}>0$
such that
\[
\sup_{(\theta,\theta')\in\mathcal{R}_i^2} \bigl\llVert H(\theta, \cdot ) -H\bigl(
\theta^{\prime}, \cdot \bigr)\bigr\rrVert_{V^{\beta_H}} \leq c
\xi_i^{\alpha_{\Delta}} \bigl|\theta-\theta^{\prime}\bigr|^{\beta
_\Delta} .
\]
\end{condition}
%
%
\begin{proposition}\label{prop:continuity} 
Suppose Conditions \ref{cond:general-noise-theorem-cond}\textup{\ref{eq:H-bound}} and
\textup{\ref{eq:V-exp-bound}},
\ref{cond:geom-rates} and
\ref{cond:contin-poisson} hold,
the constants $\beta_D,\beta_\Delta\in(0,1/\beta_H - 1]$,
for any $i\ge0$
the step size $\Gamma_i$ is
independent of $X_i$
and the projections satisfy $|\theta_{i+1}-\theta_{i}|\le
|\theta_{i+1}^*-\theta_i|$. Then,
the solutions $g_\theta$ to the Poisson equation
$g_\theta- P_\theta g_\theta= \bar{H}(\theta, \cdot )$
exist for all $\theta\in\hat{\Theta}$, and
there is a constant $c<\infty$ such that
for all $(\theta,x)\in\mathcal{R}_0\times\mathsf{X}$
\begin{eqnarray*}
&&\E_{\theta,x} \bigl|P_{\theta_i}g_{\theta_i}(X_i)
-P_{\theta_{i-1}}g_{\theta_{i-1}}(X_i)\bigr|
\\
&&\quad \le c \E\bigl[\Gamma_i^{\beta_D} \bigr] \xi_i^{2\alpha_M + 2\alpha_\rho+ \alpha_D
+ (\beta_D+1)(\beta_H\alpha_V+\alpha_H)}
V^{(\beta_D+1) \beta_H}(x)
\\
&&\qquad {} + c \E\bigl[\Gamma_i^{\beta_\Delta}\bigr] \xi_i^{\alpha_M+\alpha_\rho+\alpha_\Delta+ \beta_\Delta\alpha_H
+ (\beta_\Delta+1)\beta_H\alpha_V}
V^{(\beta_\Delta+1)\beta_H}(x) .
\end{eqnarray*}
\end{proposition}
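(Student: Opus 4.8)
The plan is to combine the quantitative continuity estimate for the Poisson solutions from Proposition~\ref{prop:geom-erg-continuous} with the explicit $\xi_i$-control of the ergodicity constants (Condition~\ref{cond:geom-rates}) and of the continuity constants (Condition~\ref{cond:contin-poisson}), and then to take expectations using the one-step increment bound together with the drift estimate \eqref{eq:V-exp-bound}. Throughout I would set $f_\theta \defeq H(\theta,\uarg)$, so that $g_\theta$ is precisely the solution constructed in Proposition~\ref{prop:poisson-bound}; its existence for every $\theta \in \hat{\Theta}$ is immediate, since each such $\theta$ lies in some $\mathcal{R}_i$ and then $\|f_\theta\|_{V^{\beta_H}} \le c\,\xi_i^{\alpha_H} < \infty$ by \eqref{eq:H-bound}. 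I apply all the geometric-ergodicity results with $r = \beta_H$.

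First I would fix a realisation and apply Proposition~\ref{prop:geom-erg-continuous} with $\theta = \theta_i$ and $\theta' = \theta_{i-1}$ (working, as in this section, under $\pi_{\theta_i}=\pi_{\theta_{i-1}}$, so that the proposition applies). Since $\theta_i \in \mathcal{R}_i$ and $\theta_{i-1} \in \mathcal{R}_{i-1} \subset \mathcal{R}_i$ by Condition~\ref{cond:w}~\eqref{item:projection-sets}, every $\theta$-dependent constant may be bounded uniformly over $\mathcal{R}_i$: Condition~\ref{cond:geom-rates} controls $M_{\cdot,r}$ and $(1-\rho_{\cdot,r})^{-1}$, Condition~\ref{cond:contin-poisson} controls $D_{\cdot,\cdot,r}$ and $\|f_{\theta_i}-f_{\theta_{i-1}}\|_{V^{\beta_H}}$, and \eqref{eq:H-bound} controls $\|f_{\theta_i}\|_{V^{\beta_H}}$. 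For a generic constant $C$ this yields
\[
\big\| P_{\theta_i} g_{\theta_i} - P_{\theta_{i-1}} g_{\theta_{i-1}} \big\|_{V^{\beta_H}}
\le C\,\xi_i^{2\alpha_M + 2\alpha_\rho + \alpha_D + \alpha_H}\,|\theta_i - \theta_{i-1}|^{\beta_D}
+ C\,\xi_i^{\alpha_M + \alpha_\rho + \alpha_\Delta}\,|\theta_i - \theta_{i-1}|^{\beta_\Delta}\ ,
\]
and evaluating the $V^{\beta_H}$-norm at the point $x = X_i$ bounds the target quantity by the right-hand side times $V^{\beta_H}(X_i)$.

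Next I would eliminate the increment $|\theta_i - \theta_{i-1}|$. The projection hypothesis gives $|\theta_i - \theta_{i-1}| \le |\theta_i^* - \theta_{i-1}| = \Gamma_i\,|H(\theta_{i-1},X_i)|$, and then \eqref{eq:H-bound} together with $\xi_{i-1} \le \xi_i$ gives $|\theta_i - \theta_{i-1}| \le C\,\Gamma_i\,\xi_i^{\alpha_H}\,V^{\beta_H}(X_i)$. Substituting its $\beta_D$-th and $\beta_\Delta$-th powers collects all randomness apart from $\Gamma_i$ into the single factors $V^{(\beta_D+1)\beta_H}(X_i)$ and $V^{(\beta_\Delta+1)\beta_H}(X_i)$, while the $\Gamma_i$-dependence is reduced to the explicit factors $\Gamma_i^{\beta_D}$ and $\Gamma_i^{\beta_\Delta}$. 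Taking $\E_{\theta,x}$ and using that $\Gamma_i$ is independent of $X_i$ then factors out $\E[\Gamma_i^{\beta_D}]$ and $\E[\Gamma_i^{\beta_\Delta}]$.

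The last and most delicate step is to bound $\E_{\theta,x}[V^{(\beta_D+1)\beta_H}(X_i)]$ and $\E_{\theta,x}[V^{(\beta_\Delta+1)\beta_H}(X_i)]$, since \eqref{eq:V-exp-bound} controls only the linear quantity $\E_{\theta,x}[V(X_i)] \le c\,\xi_i^{\alpha_V}V(x)$. This is exactly where the hypotheses $\beta_D, \beta_\Delta \in (0, 1/\beta_H - 1]$ are used: they ensure $(\beta_D+1)\beta_H \le 1$ and $(\beta_\Delta+1)\beta_H \le 1$, so that Jensen's inequality yields $\E_{\theta,x}[V^{(\beta_D+1)\beta_H}(X_i)] \le (\E_{\theta,x}[V(X_i)])^{(\beta_D+1)\beta_H} \le C\,\xi_i^{(\beta_D+1)\beta_H\alpha_V}\,V^{(\beta_D+1)\beta_H}(x)$, and analogously for $\beta_\Delta$. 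Collecting the powers of $\xi_i$ from the three sources — the uniform constant bound, the increment substitution, and this drift estimate — reproduces the two exponents $2\alpha_M + 2\alpha_\rho + \alpha_D + (\beta_D+1)(\beta_H\alpha_V + \alpha_H)$ and $\alpha_M + \alpha_\rho + \alpha_\Delta + \beta_\Delta\alpha_H + (\beta_\Delta+1)\beta_H\alpha_V$ claimed in the statement. I expect the bookkeeping of these exponents, and in particular checking that the constraint on $\beta_D, \beta_\Delta$ keeps the power of $V$ at most one so that the merely linear drift bound suffices, to be the main point of care.
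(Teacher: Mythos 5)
Your proposal is correct and follows essentially the same route as the paper's proof: apply Proposition~\ref{prop:geom-erg-continuous} with $r=\beta_H$ to $\theta_i,\theta_{i-1}\in\mathcal{R}_i$, bound the constants via Conditions~\ref{cond:geom-rates} and~\ref{cond:contin-poisson}, substitute $|\theta_i-\theta_{i-1}|\le c\,\Gamma_i\xi_i^{\alpha_H}V^{\beta_H}(X_i)$, and conclude by independence of $\Gamma_i$ and $X_i$ together with Jensen's inequality, which is exactly where the hypothesis $(1+(\beta_D\vee\beta_\Delta))\beta_H\le 1$ is used. The exponent bookkeeping matches the statement.
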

%
%
\begin{pf} 
By assumption, both $\theta_{i}$ and $\theta_{i-1}$ are in
$\mathcal{R}_i$, so $|\theta_{i}-\theta_{i-1}|\le\Gamma_{i}
|H(\theta_{i-1},X_i)|\le c \Gamma_{i} \xi_i^{\alpha_H} V^{\beta_H}(X_i)$.
Proposition~\ref{prop:geom-erg-continuous} yields, with $r=\beta_H$ and
denoting $H_\theta(x):=H(\theta,x)$,
\begin{eqnarray*}
&&\|P_{\theta_i}g_{\theta_i} -P_{\theta_{i-1}}g_{\theta_{i-1}}
\|_{V^{\beta_H}}
\\
&&\quad \le M_{\theta_i,\beta_H} M_{\theta_{i-1},\beta_H} D_{\theta_i,\theta_{i-1},\beta_H} \bigl(1-(
\rho_{\theta_i,\beta_H} \vee \rho_{\theta_{i-1},\beta_H}) \bigr)^{-2} |
\theta_i-\theta_{i-1}|^{\beta_D} \| H_{\theta_i}
\|_{V^{\beta_H}}
\\
&&\qquad {}+ M_{\theta_{i-1},\beta_H} (1-\rho_{\theta_{i-1},\beta_H})^{-1} \|
H_{\theta_i}-H_{\theta_{i-1}}\|_{V^{\beta_H}}
\\
&&\quad \le c \xi_i^{2\alpha_M + 2\alpha_\rho+ \alpha_D } |\theta_i-
\theta_{i-1}|^{\beta_D} \| H_{\theta_i}\|_{V^{\beta_H}} + c
\xi_i^{\alpha_M + \alpha_\rho} \| H_{\theta_i}-H_{\theta_{i-1}}
\|_{V^{\beta_H}}
\\
&&\quad \le c \xi_i^{2\alpha_M + 2\alpha_\rho+ \alpha_D +
\alpha_H(1+\beta_D)} \Gamma_i^{\beta_D}
V^{\beta_D \beta_H}(X_i) + c \xi_i^{\alpha_M+\alpha_\rho+\alpha_\Delta+ \beta_\Delta
\alpha_H}
\Gamma_i^{\beta_\Delta} V^{\beta_\Delta\beta_H}(X_i) .
\end{eqnarray*}
The independence of $\Gamma_i$ and $X_i$
and
Condition \ref{cond:general-noise-theorem-cond}\ref{eq:V-exp-bound} with Jensen's inequality (we have
$(1 + (\beta_D\vee\beta_\Delta))\beta_H\in(0,1]$)
imply the claim.
\end{pf}
%

Now, we shall consider the common case where
$(\Gamma_i)_{i\ge1}$ is a deterministic power sequence.
Then, Condition \ref{cond:general-noise-theorem-cond}
can be established.
%
\begin{proposition}\label{prop:final-prop-cont} 
Suppose $\Gamma_i\equiv ci^{-\eta}$ for all $i\ge1$ with some
$c<\infty$ and $\eta\in(1/2,1]$.
Then, if the conditions of Proposition~\ref{prop:continuity} hold and
%
\begin{eqnarray}
\label{eq:poisson-cont-1}\sum_{i=1}^\infty i^{-(1+\beta_D)\eta}
\xi_i^{\alpha_w + 2\alpha_M + 2\alpha_\rho+ \alpha_D
+ (\beta_D+1)(\beta_H+\alpha_V+\alpha_H)} &<& \infty,
\\
\label{eq:poisson-cont-2}\sum_{i=1}^\infty i^{-(1+\beta_{\Delta})\eta}
\xi_i^{\alpha_M + \alpha_\rho+ \alpha_{\Delta}
+ \beta_{\Delta}\alpha_H + (\beta_{\Delta}+1)\beta_H\alpha_V}
&<&\infty,
\\
\label{eq:general-conditions-cont}\sum_{i=1}^\infty i^{-2\eta}
\xi_i^{2\alpha_w
+ 2(\alpha_H + \alpha_M + \alpha_\rho+ \beta_H + \alpha_V)} &<&\infty,
\end{eqnarray}
then, Condition
\ref{cond:general-noise-theorem-cond} holds.
\end{proposition}
%
%
\begin{pf} 
Condition \ref{cond:general-noise-theorem-cond}\ref{eq:H-bound} and \ref{eq:V-exp-bound} hold by
assumption. Propositions \ref{prop:geom-rates} and
\ref{prop:continuity} imply
Condition \ref{cond:general-noise-theorem-cond}\ref{eq:poisson-bound} with $\alpha_g = \alpha_H +
\alpha_M+\alpha_\rho$ and $\beta_g = \beta_H$.
Condition
\ref{cond:general-noise-theorem-cond}\ref{eq:contin-sum}
follows from Proposition~\ref{prop:continuity} with
\eqref{eq:poisson-cont-1} and \eqref{eq:poisson-cont-2}.

Observe then that
$\Gamma_{i+1}\Gamma_i \le\Gamma_i^2 = c^2 i^{-2\eta}$ and
by the mean value theorem
$|\Gamma_{i+1}-\Gamma_i| = c \eta(i+h_i)^{-\eta-1} \le c \eta
i^{-\eta-1}
\le\eta\Gamma_{i}^2$
where $h_i\in[0,1]$.
Conditions \ref{cond:general-noise-theorem-cond}\ref{eq:martingale-suff}--\ref{eq:diff-consecutive-weights}
follow easily from \eqref{eq:general-conditions-cont},
by the fact $\alpha_g = \alpha_H +
\alpha_M+\alpha_\rho$ and $\beta_g = \beta_H$.
\end{pf}
%


\subsection{Non-smooth family of Markov kernels}\label{sec:random-step-size} 

When the mapping $\theta\to P_\theta$ does not admit
(local) H\"{o}lder-continuity as discussed above, establishing
Condition \ref{cond:general-noise-theorem-cond} is more involved, but
possible using a random step size sequence which, in intuitive terms,
enforce continuity in a stochastic manner.
We focus on a specific step size sequence given as $\Gamma_i :=
\gamma_i \mathbb{I}\{U_i\le p_i\}$ where the $U_i$ are independent uniform
$[0,1]$ random variables and both sequences $\gamma_i$ and $p_i$ decay
to zero. It will be clear later on that these sequences must satisfy
$\sum_i \gamma_i p_i = \infty$,
$\sum_i \gamma_i^2 p_i<\infty$ and $\sum_i \gamma_i p_i^2<\infty
$; for
simplicity of exposition, we
shall consider below the particular example where $\gamma_i$ and $p_i$
decay with a power law.

The definition of $(\Gamma_i)_{i\ge1}$ above will
result in practice in keeping the value of $\theta_i$ fixed for longer and
longer (random) periods. We remark that one could consider inducing such
a behaviour also in a deterministic manner, but we do not pursue this here.
%
\begin{proposition}\label{prop:random-weights} 
Assume Conditions \ref{cond:w} and \ref{cond:geom-rates} hold
and for all $i\ge1$ the step size
$\Gamma_i$ is independent of $X_i$.
Suppose also that Condition \ref{cond:general-noise-theorem-cond}\textup{\ref{eq:H-bound}} holds with $\alpha_H\in[0,\infty)$ and
$\beta_H\in[0,1/2]$, and
Condition \ref{cond:general-noise-theorem-cond}\textup{\ref{eq:V-exp-bound}} holds with
$\alpha_V\in[0,\infty)$.

Then,
the solutions $g_\theta$ to the Poisson equation
$g_\theta- P_\theta g_\theta= \bar{H}(\theta, \cdot )$
exist for all $\theta\in\hat{\Theta}$, and
there exists a constant $c<\infty$ such that
for any $(\theta,x)\in\mathcal{R}_0\times\mathsf{X}$
\[
\E_{\theta,x} \bigl[\bigl|P_{\theta_i}g_{\theta_i}(X_i)
-P_{\theta_{i-1}}g_{\theta_{i-1}}(X_i)\bigr| \bigr] \le c \P(
\Gamma_i\neq0) \xi_i^{\alpha_M+\alpha_\rho+\alpha_H+\beta_H\alpha_V} V^{\beta_H}(x)
.
\]
\end{proposition}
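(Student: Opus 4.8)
The plan is to exploit the defining feature of the random step size: whenever $\Gamma_i=0$ the parameter does not move, so the difference in the statement vanishes identically, and no continuity of $\theta\mapsto P_\theta$ is needed at all. Concretely, on $\{\Gamma_i=0\}$ we have $\theta_i^*=\theta_{i-1}$, and since $\theta_{i-1}\in\mathcal{R}_{i-1}\subset\mathcal{R}_i$ by Condition \ref{cond:w} \eqref{item:projection-sets}, the point $\theta_i^*$ already lies in the current feasible set so no projection takes place; hence $\theta_i=\theta_{i-1}$ and consequently $P_{\theta_i}g_{\theta_i}(X_i)=P_{\theta_{i-1}}g_{\theta_{i-1}}(X_i)$. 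This lets me write
\[
\E_{\theta,x}\big[|P_{\theta_i}g_{\theta_i}(X_i)-P_{\theta_{i-1}}g_{\theta_{i-1}}(X_i)|\big]
=\E_{\theta,x}\big[\charfun{\Gamma_i\neq 0}\,|P_{\theta_i}g_{\theta_i}(X_i)-P_{\theta_{i-1}}g_{\theta_{i-1}}(X_i)|\big],
\]
so it remains only to bound the integrand crudely on the event $\{\Gamma_i\neq 0\}$.

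First I would record that Condition \ref{cond:geom-rates} supplies Condition \ref{cond:geom-erg}, so that Proposition \ref{prop:poisson-bound} guarantees the existence of $g_\theta$ together with the bound $\|P_\theta g_\theta\|_{V^{\beta_H}}\le M_{\theta,\beta_H}(1-\rho_{\theta,\beta_H})^{-1}\|H(\theta,\uarg)\|_{V^{\beta_H}}$. Since both $\theta_{i-1}$ and $\theta_i$ lie in $\mathcal{R}_i$, combining this estimate with the rate bounds of Condition \ref{cond:geom-rates} and with Condition \ref{cond:general-noise-theorem-cond} \eqref{eq:H-bound} (which gives $\|H(\theta,\uarg)\|_{V^{\beta_H}}\le c\,\xi_i^{\alpha_H}$ for $\theta\in\mathcal{R}_i$) yields, for either index,
\[
|P_\theta g_\theta(X_i)|\le c\,\xi_i^{\alpha_M+\alpha_\rho+\alpha_H}\,V^{\beta_H}(X_i).
\]
A triangle inequality then bounds the integrand above by $2c\,\xi_i^{\alpha_M+\alpha_\rho+\alpha_H}\,V^{\beta_H}(X_i)$ on $\{\Gamma_i\neq 0\}$.

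Finally, using that $\Gamma_i$ is independent of $X_i$, I would factor the remaining expectation as $\P(\Gamma_i\neq 0)\,\E_{\theta,x}[V^{\beta_H}(X_i)]$, and then apply Jensen's inequality (valid since $\beta_H\le 1/2\le 1$) together with Condition \ref{cond:general-noise-theorem-cond} \eqref{eq:V-exp-bound} to obtain $\E_{\theta,x}[V^{\beta_H}(X_i)]\le c\,\xi_i^{\beta_H\alpha_V}V^{\beta_H}(x)$. Collecting the powers of $\xi_i$ produces the exponent $\alpha_M+\alpha_\rho+\alpha_H+\beta_H\alpha_V$ and hence the claimed bound. I expect no serious analytic obstacle here; the entire content is the opening observation that the random step size discards the H\"older-continuity machinery of Section \ref{sec:continuous-kernels} and replaces it by the factor $\P(\Gamma_i\neq 0)$, which is exactly what renders the non-smooth setting tractable.
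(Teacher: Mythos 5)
Your proposal is correct and follows essentially the same route as the paper's proof: insert the indicator $\charfun{\Gamma_i\neq 0}$ using the fact that $\theta_i=\theta_{i-1}$ when $\Gamma_i=0$, bound both $P_{\theta_i}g_{\theta_i}(X_i)$ and $P_{\theta_{i-1}}g_{\theta_{i-1}}(X_i)$ crudely via Proposition \ref{prop:poisson-bound} together with Conditions \ref{cond:geom-rates} and \ref{cond:general-noise-theorem-cond} \eqref{eq:H-bound}, and conclude by independence of $\Gamma_i$ and $X_i$ with Jensen's inequality and \eqref{eq:V-exp-bound}. Your justification that no projection occurs on $\{\Gamma_i=0\}$ (since $\theta_i^*=\theta_{i-1}\in\mathcal{R}_{i-1}\subset\mathcal{R}_i$) is in fact slightly more careful than the paper's terse ``clearly''.
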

%
%
\begin{pf} 
The solutions $g_\theta$ to the Poisson equation exist by Proposition~\ref{prop:poisson-bound}.
If $\Gamma_i=0$ then clearly $\theta_i=\theta_{i-1}$
and so
\begin{eqnarray*}
&&\bigl|P_{\theta_i}g_{\theta_i}(X_i)
-P_{\theta_{i-1}}g_{\theta_{i-1}}(X_i)\bigr|\\
&&\quad =\mathbb{I}\{\Gamma_i\neq0\}\bigl|P_{\theta_i}g_{\theta_i}(X_i)
-P_{\theta_{i-1}}g_{\theta_{i-1}}(X_i)\bigr|
\\
&&\quad \le c \mathbb{I}\{\Gamma_i\neq 0\} \bigl(\xi_i^{\alpha_M+\alpha
_\rho}
\bigl\|H(\theta_i, \cdot )\bigr\|_{V^{\beta_H}} + \xi_{i-1}^{\alpha_M+\alpha_\rho}
\bigl\|H(\theta_{i-1}, \cdot )\bigr\|_{V^{\beta_H}} \bigr) V^{\beta_H}(X_i)
,
\end{eqnarray*}
by Proposition~\ref{prop:poisson-bound}.
The claim follows
by Conditions
\ref{cond:general-noise-theorem-cond}\ref{eq:H-bound}
and \ref{eq:V-exp-bound}, and
by the independence of $\Gamma_i$ and $X_i$.
\end{pf}
%

Next, we shall consider the particular case where $(\Gamma_i)_{i\ge1}$
is defined by two sequences with a power decay.
%
\begin{proposition}\label{prop:nonsmooth-noise} 
Let $(U_i)_{i\ge1}$ be a sequence of independent and uniformly
distributed random variables on $[0,1]$, and
assume $\Gamma_i \equiv\gamma_i \mathbb{I}\{U_i \le p_i\}$,
where the constant sequences
$(\gamma_i)_{i\ge1}\subset(0,1)$ and
$(p_i)_{i\ge1}\subset[0,1]$ are defined as
$\gamma_i :=c_\gamma i^{-\eta_\gamma}$ and
$p_i :=c_p i^{-\eta_p}$ for some $c_\gamma,c_p\in(0,\infty)$ and
$\eta_\gamma,\eta_p\in(0,1)$ such that $\eta_\gamma+\eta_p\le1$,
$2\eta_\gamma+\eta_p>1$ and $\eta_\gamma+2\eta_p>1$.

If Conditions \ref{cond:general-noise-theorem-cond}\textup{\ref{eq:H-bound}} and \textup{\ref{eq:V-exp-bound}} and
Condition \ref{cond:geom-rates} hold, and
%
\begin{eqnarray}
\label{eq:xi-random-1}\sum_{i=1}^\infty i^{-\eta_\gamma-2\eta_p}
\xi_i^{\alpha_w+\alpha_M+\alpha_\rho+\alpha_H+\beta_H\alpha_V} &<&
\infty,
\\
\label{eq:xi-random-2}\sum_{i=1}^\infty i^{-2\eta_\gamma-\eta_p}
\xi_i^{2(\alpha_w +\alpha_H +
\alpha_M + \alpha_\rho+ \beta_H\alpha_V)} &<& \infty,
\end{eqnarray}
then, Condition \ref{cond:general-noise-theorem-cond} is satisfied.
\end{proposition}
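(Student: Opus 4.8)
The plan is to verify each of the seven parts of Condition \ref{cond:general-noise-theorem-cond} in turn, observing that under the power decay every moment of the weights that appears reduces to an explicit power of $i$. Parts \eqref{eq:H-bound} and \eqref{eq:V-exp-bound} hold by hypothesis; part \eqref{eq:poisson-bound} follows immediately from Proposition \ref{prop:geom-rates} with $\alpha_g = \alpha_H + \alpha_M + \alpha_\rho$ and $\beta_g = \beta_H$, and the existence of the solutions $g_\theta$ is guaranteed by Proposition \ref{prop:poisson-bound}. The required independence of $\Gamma_{i+1}$ from $\F_i$ and $X_{i+1}$ is automatic, since $\Gamma_{i+1} = \gamma_{i+1}\charfun{U_{i+1}\le p_{i+1}}$ is a function of $U_{i+1}$ alone.

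The first real step is to compute the weight moments using the mutual independence of the $U_i$. I would record $\E[\Gamma_i] = \gamma_i p_i$, $\E[\Gamma_i^2] = \gamma_i^2 p_i$, $\P(\Gamma_i\neq 0) = p_i$, and $\E[\Gamma_{i+1}\Gamma_i] = \gamma_{i+1}\gamma_i p_{i+1}p_i$, which up to constants are $i^{-\eta_\gamma-\eta_p}$, $i^{-2\eta_\gamma-\eta_p}$, $i^{-\eta_p}$ and $i^{-2\eta_\gamma-2\eta_p}$ respectively. For the difference I would apply the mean value theorem to the smooth map $t\mapsto c_\gamma c_p t^{-\eta_\gamma-\eta_p}$, obtaining $|\E[\Gamma_{i+1}-\Gamma_i]|\le C i^{-\eta_\gamma-\eta_p-1}$; the extra factor $i^{-1}$ gained here is exactly what makes part \eqref{eq:diff-consecutive-weights} work.

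With these moments the remaining four parts reduce to the two assumed summability hypotheses. For \eqref{eq:contin-sum} I would substitute the bound of Proposition \ref{prop:random-weights} into the series, so that the generic summand is controlled, up to a constant and a factor $V^{\beta_H}(x)$, by $\E[\Gamma_{i+1}]\,\P(\Gamma_i\neq 0)\,\xi_i^{\alpha_w+\alpha_M+\alpha_\rho+\alpha_H+\beta_H\alpha_V}$, which is comparable to $i^{-\eta_\gamma-2\eta_p}\xi_i^{\alpha_w+\alpha_M+\alpha_\rho+\alpha_H+\beta_H\alpha_V}$ and hence summable by \eqref{eq:xi-random-1}. For \eqref{eq:martingale-suff} the maximum inside the exponent collapses, because $\alpha_g+\beta_g\alpha_V = \alpha_H+\alpha_M+\alpha_\rho+\beta_H\alpha_V \ge \alpha_H+\beta_H\alpha_V$, and the resulting series is precisely \eqref{eq:xi-random-2}. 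Part \eqref{eq:prod-consecutive-weights} then follows from \eqref{eq:xi-random-2} by a monotone comparison: its $\xi$-exponent $2\alpha_H+\alpha_M+\alpha_\rho+2\beta_H\alpha_V$ is dominated by that of \eqref{eq:xi-random-2} since $\xi_i\ge 1$, while $i^{-2\eta_\gamma-2\eta_p}\le i^{-2\eta_\gamma-\eta_p}$ because $\eta_p>0$. Finally, part \eqref{eq:diff-consecutive-weights} shares the $\xi$-exponent of \eqref{eq:xi-random-1} exactly, and since $\eta_p<1$ one has $i^{-\eta_\gamma-\eta_p-1}\le i^{-\eta_\gamma-2\eta_p}$, so it too reduces to \eqref{eq:xi-random-1}.

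The difficulty here is bookkeeping rather than conceptual: after substituting $\alpha_g=\alpha_H+\alpha_M+\alpha_\rho$ and $\beta_g=\beta_H$ one must check, term by term, that each $\xi$-exponent produced by a weight moment is dominated by the exponent in whichever of \eqref{eq:xi-random-1}--\eqref{eq:xi-random-2} is invoked, using only $\xi_i\ge 1$ and monotonicity. The single place requiring genuine care is the mean value theorem bound feeding into \eqref{eq:diff-consecutive-weights}, where the gain of one power of $i$ combined with $\eta_p<1$ is what yields summability; all the other comparisons are immediate once the moments are in hand.
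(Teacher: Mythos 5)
Your proposal is correct and follows essentially the same route as the paper's proof: compute the explicit power-law moments of the weights, invoke Propositions \ref{prop:geom-rates} and \ref{prop:random-weights} for parts \eqref{eq:poisson-bound} and \eqref{eq:contin-sum}, and reduce the remaining summability conditions to \eqref{eq:xi-random-1}--\eqref{eq:xi-random-2} after substituting $\alpha_g=\alpha_H+\alpha_M+\alpha_\rho$ and $\beta_g=\beta_H$. The only (immaterial) deviation is that you verify part \eqref{eq:diff-consecutive-weights} against \eqref{eq:xi-random-1} using $\eta_p<1$, whereas the paper compares it to \eqref{eq:xi-random-2} using $\eta_\gamma<1$; both comparisons are valid.
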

%
%
\begin{pf} 
Proposition~\ref{prop:geom-rates} implies
Condition \ref{cond:general-noise-theorem-cond}\ref{eq:poisson-bound}
with $\beta_g=\beta_H$ and $\alpha_g = \alpha_H+\alpha_M + \alpha_\rho$.
Compute
$\E[\Gamma_{i+1}]\P(\Gamma_i\neq0) = \gamma_{i+1}p_{i+1}p_i
\le ci^{-\eta_\gamma-2\eta_p}$. Then,
Proposition~\ref{prop:random-weights}
with \eqref{eq:xi-random-1} imply
Condition \ref{cond:general-noise-theorem-cond}\ref{eq:contin-sum}.

Let us then compute
$\E[\Gamma_i^2] = \gamma_i^2 p_i = c i^{-2\eta_\gamma-\eta_p}$,
and observe that
$\E[\Gamma_{i+1}\Gamma_i] =c i^{-2\eta_{\gamma}-2\eta_p}
\le c i^{-2\eta_\gamma-\eta_p}$
and that
$|\E[\Gamma_{i+1}-\Gamma_i]|
\le c i^{-\eta_{\gamma}-\eta_p-1}
\le c i^{-2\eta_{\gamma}-\eta_p}$.
With these bounds, \eqref{eq:xi-random-2} implies
Conditions \ref{cond:general-noise-theorem-cond}\ref{eq:martingale-suff}--\ref{eq:diff-consecutive-weights}.
\end{pf}
%
%
\begin{remark} 
We emphasise that while our conditions on $(\Gamma_i)_{i\ge1}$
are only sufficient, it is
necessary that the random step sizes decay to zero, that is
$\limsup_{i\to\infty} \Gamma_i=0$. Otherwise, the procedure might not
converge; see \cite{roberts-rosenthal}, Example~4, for a
related result in the context of adaptive Markov chain Monte Carlo.
\end{remark}
%



\section{Convergence}\label{sec:convergence} 

Up to this point, we have only considered the stability of the
stochastic approximation process with expanding projections. Indeed,
after showing the stability we know that the projections can occur
only finitely often (almost surely), and the noise sequence can
typically be controlled. Given this, the stochastic approximation
literature provides several alternatives to show the convergence (e.g.,
\cite{benaim,benveniste-metivier-priouret,borkar-sa,chen-sa,kushner-yin-sa}).

In some special cases, one can employ our stability results directly
to establish convergence; namely, if the strict drift condition
\eqref{eq:noise-negligible} holds outside an arbitrary small
neighbourhood of the zeros of $h$.
We believe, however, that such a result has only a limited applicability,
because we suspect that it is
often useful to consider two different Lyapunov functions $w$
and $\hat{w}$ to establish the stability and convergence, respectively.

In many practical scenarios, the `true' Lyapunov function $\hat{w}$,
which would yield convergence, cannot be given in a closed form. It is
also possible that $\hat{w}$ does not satisfy Condition \ref{cond:w}
at all. We believe that it is often possible to find a simpler
`approximate Lyapunov function' $w$ satisfying Condition \ref{cond:w},
which yields a suitable drift away from the \emph{boundary} of the
space, but does not necessarily qualify as a true Lyapunov function to
establish the convergence.

We formulate below a more general convergence result
following \cite{sa-verifiable} for reader's
convenience.
%
\begin{condition}\label{cond:true-mean-field} 
The set $\Theta\subset\R^d$ is open, the mean field
$h\dvtx \Theta\to\R^d$ is continuous, and
there exists a continuously differentiable function
$\hat{w}\dvtx \Theta\to[0,\infty)$ such that
\begin{enumerate}[(iii)]
\item[(i)] there exists a constant $M_0>0$ such that
\[
\mathcal{L}:= \bigl\{ \theta\in\Theta\dvtx  \bigl\langle\nabla \hat{w}(\theta),h(
\theta) \bigr\rangle= 0 \bigr\} \subset\bigl\{ \theta\in\Theta\dvtx  \hat{w}(
\theta)<M_0\bigr\} ,
\]
\item[(ii)] there exists $M_1\in(M_0,\infty]$ such that
$\{\theta\in\Theta\dvtx  \hat{w}(\theta)\le M_1\}$ is compact,
\item[(iii)] for all $\theta\in\Theta\setminus\mathcal{L}$,
the inner product
$\langle\nabla\hat{w}(\theta),\hat{h}(\theta)\rangle<0$, and
\item[(iv)] the closure of $\hat{w}(\mathcal{L})$ has an empty interior.
\end{enumerate}
\end{condition}
%
%
\begin{theorem}\label{thm:convergence} 
Assume Condition \ref{cond:true-mean-field} holds, and let
$\mathcal{K}\subset\Theta$ be a compact set intersecting
$\mathcal{L}$, that is,
$\mathcal{K}\cap\mathcal{L}\neq\emptyset$. Suppose that
$(\gamma_i)_{i\ge1}$ is a sequence of non-negative real numbers
satisfying $\lim_{i\to\infty} \gamma_i = 0$ and
$\sum_{i=1}^\infty\gamma_i = \infty$.
Consider the sequence
$(\theta_i)_{i\ge0}$ taking values in $\Theta$ and
defined through the recursion $\theta_i = \theta_{i-1} + \gamma_i
h(\theta_{i-1})+\gamma_i \varepsilon_i$ for all $i\ge1$,
where $(\varepsilon_i)_{i\ge1}$ take values in $\R^d$.

If there exists an integer $i_0$ such that $\{\theta_i\}_{i\ge
i_0}\subset\mathcal{K}$ and
$\lim_{m\to\infty} \sup_{n\ge m}
| \sum_{i=m}^n \gamma_i \varepsilon_i  | = 0$,
then $\lim_{n\to\infty} \inf_{x\in\mathcal{L}\cap\mathcal{K}}
|\theta_n-x| = 0$.
\end{theorem}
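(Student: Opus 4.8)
The plan is to use the ODE (mean-field) method: interpolate the iterates on the natural time scale, show the interpolation shadows the flow of $\dot\theta = h(\theta)$, and then exploit $\hat{w}$ as a strict Lyapunov function. Throughout I work with indices $i\ge i_0$, so that $\theta_i\in\mathcal{K}$, and I reduce the claim to a statement about the set $L$ of limit points of $(\theta_n)_{n\ge i_0}$: since $\mathcal{K}$ is compact and $\mathcal{L}\cap\mathcal{K}$ is closed (as $\mathcal{L}$ is the zero set of the continuous map $\theta\mapsto\langle\nabla\hat{w}(\theta),h(\theta)\rangle$), the conclusion $\inf_{x\in\mathcal{L}\cap\mathcal{K}}|\theta_n-x|\to 0$ is equivalent to $L\subset\mathcal{L}\cap\mathcal{K}$. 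First I would record two elementary facts. Taking $n=m$ in the noise hypothesis gives $|\gamma_i\varepsilon_i|\to0$, and since $h$ is bounded on the compact set $\mathcal{K}$ and $\gamma_i\to0$, the increments satisfy $|\theta_i-\theta_{i-1}|=\gamma_i|h(\theta_{i-1})+\varepsilon_i|\to0$. Consequently $L$ is nonempty, compact, contained in $\mathcal{K}$, and connected (the limit set of a bounded sequence with vanishing increments is connected).

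The core step is to show that the interpolated process is an asymptotic pseudotrajectory of the semiflow $\Phi$ generated by $\dot\theta=h(\theta)$, and hence that $L$ is internally chain transitive for $\Phi$. Writing $t_n\defeq\sum_{i=1}^n\gamma_i\to\infty$ and defining $\bar\theta(t_n)\defeq\theta_n$ with affine interpolation in between, I would compare, over a window of indices accumulating step-size time close to a fixed $T>0$, the discrete drift $\sum\gamma_i h(\theta_{i-1})$ with $\int_0^T h(\Phi_s(\uarg))\,\ud s$. The discretisation error is controlled by the uniform continuity of $h$ on $\mathcal{K}$ together with $|\theta_i-\theta_{i-1}|\to0$, while the accumulated noise over the window is dominated by the tail oscillation $\sup_{k\ge m}|\sum_{i=m}^k\gamma_i\varepsilon_i|$, which vanishes by hypothesis. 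This yields $\sup_{0\le t\le T}|\bar\theta(s+t)-\Phi_t(\bar\theta(s))|\to0$ as $s\to\infty$ for every $T$, i.e.\ the pseudotrajectory property, whence $L$ is internally chain transitive.

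It remains to bring in $\hat{w}$. By Condition \ref{cond:true-mean-field}\,(iii), $\langle\nabla\hat{w}(\theta),h(\theta)\rangle<0$ for $\theta\in\Theta\smallsetminus\mathcal{L}$ and vanishes on $\mathcal{L}$, so $\hat{w}$ is a strict Lyapunov function for $\Phi$ with equilibrium-type set $\mathcal{L}$. The standard Lyapunov theorem for internally chain transitive sets then applies: because $\overline{\hat{w}(\mathcal{L})}$ has empty interior (Condition \ref{cond:true-mean-field}\,(iv)), every internally chain transitive set is contained in $\mathcal{L}$ and $\hat{w}$ is constant on it. Applying this to $L$ gives $L\subset\mathcal{L}$, and since $L\subset\mathcal{K}$ we conclude $L\subset\mathcal{L}\cap\mathcal{K}$, which is the claim.

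I expect the genuine obstacle to be the core step, for two reasons. First, $h$ is only assumed continuous, so $\Phi$ need not be unique; the pseudotrajectory/chain-transitivity argument must therefore be phrased for the Peano (set-valued) flow, or one must verify directly that $L$ is invariant in the chain-transitive sense rather than under a bona fide flow. Second, it is precisely here that a naive step-by-step telescoping of $\hat{w}(\theta_n)$ breaks down: with $\hat{w}$ only continuously differentiable, the first-order Taylor remainders $\rho_i=\hat{w}(\theta_i)-\hat{w}(\theta_{i-1})-\langle\nabla\hat{w}(\theta_{i-1}),\theta_i-\theta_{i-1}\rangle$ are in general not summable (their bound $\omega(|\theta_i-\theta_{i-1}|)\,|\theta_i-\theta_{i-1}|$, with $\omega$ the modulus of continuity of $\nabla\hat{w}$, need not have finite sum when $\sum\gamma_i=\infty$). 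Comparing to the flow only over bounded windows of time sidesteps this accumulation, and the empty-interior Condition \ref{cond:true-mean-field}\,(iv) is what ultimately prevents $\hat{w}(\theta_n)$ from oscillating across a nondegenerate interval of limit values.
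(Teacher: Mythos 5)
Your proof is essentially correct, but it follows a genuinely different route from the paper's. The paper does not prove Theorem \ref{thm:convergence} from scratch: it observes that the statement is \cite[Theorem 2.3]{sa-verifiable} with the monotonicity assumption on $(\gamma_i)_{i\ge 1}$ removed, and that the proof there (a ``first principles'' argument; see also \cite[Theorem 5]{andrieu-moulines-volkov}) applies unchanged. That argument works directly on the scalar sequence $\hat{w}(\theta_n)$ over bounded windows of step-size time: one shows that $\hat{w}(\theta_n)$ converges, using the empty-interior condition of Condition \ref{cond:true-mean-field} to rule out oscillation of the limit value, and then that every limit point of $(\theta_n)$ lies in $\mathcal{L}$ --- no flow of $\dot\theta=h(\theta)$ is ever constructed. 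You instead go through Bena{\"i}m's dynamical-systems machinery \cite{benaim}: asymptotic pseudotrajectories, internal chain transitivity of the limit set, and the Lyapunov theorem for internally chain transitive sets. The two routes trade different difficulties. Yours is more conceptual and delivers slightly more (the limit set is internally chain transitive and $\hat{w}$ is constant on it), but, as you yourself note, with $h$ merely continuous the semiflow $\Phi$ need not exist, so the pseudotrajectory property and the chain-transitivity/Lyapunov theorems must all be taken in their set-valued (differential inclusion) form rather than cited verbatim for semiflows; one must also check that solutions started in $\mathcal{K}$ remain in $\Theta$ over each comparison window, since $\mathcal{K}$ is not assumed to lie in a compact sublevel set of $\hat{w}$ (a cutoff of $h$ outside a neighbourhood of $\mathcal{K}$, harmless because the discrete trajectory stays in $\mathcal{K}$, handles this). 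The first-principles route avoids these flow-theoretic issues at the price of a more hands-on estimate, and it is precisely there that the non-summability of the $C^1$ Taylor remainders you point out gets absorbed. Your diagnosis of where the real work lies is accurate.
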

%
%
\begin{pf} 
Theorem~\ref{thm:convergence} is a restatement of \cite{sa-verifiable},
Theorem~2.3, but without the monotonicity assumption on the sequence
$(\gamma_i)_{i\ge1}$. The proof of
\cite{sa-verifiable}, Theorem~2.3, applies unchanged,
but the reader can also consult
\cite{andrieu-moulines-volkov}, Theorem~5, which is a slight
generalisation of Theorem~\ref{thm:convergence}.
\end{pf}
%
%
\begin{remark} 
The stability results of the present paper ensure that $\theta_i$ are
eventually contained in a level set of $w$ which can usually be assumed
compact.
Then, one can take
$\mathcal{K}=\mathcal{W}_{M'}$ for some (random) $M'>0$,
and the trajectories of $(\theta_i)_{i\ge0}$ are eventually contained
within $\mathcal{K}$, and there are only finitely many projections,
almost surely. To employ Theorem~\ref{thm:convergence}, it then
suffices to show that for any $M$ in the possible range of $w$
%
\begin{equation}\label{eq:conv-noise-cond}
\lim_{m\to\infty} \sup_{n\ge m} \Biggl| \sum_{i=m}^n
\Gamma_i \bar{H}(\theta_i,X_{i+1}) \mathbb{I}
\{\theta_i\in\mathcal{W}_M\} \Biggr| = 0 .
\end{equation}
\end{remark}
%

For the sake of completeness and
because our setting involves the random step sizes
$(\Gamma_i)_{i\ge1}$, we give a detailed theorem to establish
this noise condition, by a straightforward modification of
Theorem~\ref{thm:general-noise-theorem}.
%
\begin{theorem}\label{thm:conv-noise-theorem} 
Suppose that for all $i\ge1$, the step size
$\Gamma_i$ is independent of $\F_{i-1}$ and $X_i$, and the sums
$\sum_{i\ge1} \E[\Gamma_i^2]$ and $\sum_{i\ge1}
|\E[\Gamma_{i+1}-\Gamma_i]|$ are finite.
Let $\mathcal{R}\subset\hat{\Theta}$ be a compact set such
that there exists a constant $c<\infty$ so that
for any $(\theta,x)\in\mathcal{R}\times\mathsf{X}$
%
\begin{eqnarray}
\label{eq:V-exp-conv}\sup_{i\ge0} \E_{\theta,x} \bigl[V(X_{i+1})\mathbb{I}
\bigl\{A_{\mathcal
{R}}^i\bigr\} \bigr] & \le& c V(x),
\\
\label{eq:poisson-conv}\sup_{\theta\in\mathcal{R}} \bigl[ \bigl|g_\theta(x)\bigr| + \bigl|P_\theta
g_\theta(x)\bigr| \bigr] &\le& c V^{\beta_g}(x),
\\
\label{eq:poisson-diff-conv}\sum_{i=1}^\infty\E[\Gamma_{i+1}]
\E_{\theta,x} \bigl[\bigl|P_{\theta_i}g_{\theta_i}(X_i)
-P_{\theta_{i-1}}g_{\theta_{i-1}}(X_i)\bigr| \mathbb{I}\bigl
\{A_{\mathcal{R}}^i\bigr\} \bigr] &<&\infty,
\end{eqnarray}
where
$g_\theta$ is the solution of the Poisson equation as in
Proposition~\ref{prop:poisson-bound} and
$A_{\mathcal{R}}^i:=\bigcap_{n=0}^i \{\theta_n \in
\mathcal{R}\}$. Then, \eqref{eq:conv-noise-cond}
holds for $\P_{\theta,x}$-almost every $\omega\in
\bigcap_{i\ge0} A_{\mathcal{R}}^i$.
\end{theorem}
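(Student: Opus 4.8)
The plan is to follow the proof of Theorem \ref{thm:general-noise-theorem} almost verbatim, taking advantage of two simplifications and overcoming one genuinely new difficulty. The simplifications are, first, that the inner product against $\nabla w(\theta_i)$ is absent here, so the term analogous to $R^3_{m,n}$---the one produced by $\nabla w(\theta_i)-\nabla w(\theta_{i-1})$---does not arise; and second, that the supremum is taken over the \emph{fixed} compact set $\mathcal{R}$, so every growing factor $\xi_i^{\alpha}$ is replaced by a constant. This is precisely why \eqref{eq:V-exp-conv}--\eqref{eq:poisson-diff-conv} carry no powers of $\xi_i$. The new difficulty is that the conclusion is only claimed on $\bigcap_{i\ge 0}A_{\mathcal{R}}^i$, so every moment estimate must be localised through the indicators $\charfun{A_{\mathcal{R}}^i}$.

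I would first substitute the Poisson decomposition $\bar H(\theta_{i-1},X_i)=g_{\theta_{i-1}}(X_i)-P_{\theta_{i-1}}g_{\theta_{i-1}}(X_i)$ into the summand of \eqref{eq:conv-noise-cond} and, exactly as in Theorem \ref{thm:general-noise-theorem}, add and subtract $P_{\theta_{i-1}}g_{\theta_{i-1}}(X_{i-1})$ and $P_{\theta_{i-2}}g_{\theta_{i-2}}(X_{i-1})$. This writes $\Gamma_i\bar H(\theta_{i-1},X_i)$ as the sum of a martingale increment (scaled by $\Gamma_i$), a continuity increment $P_{\theta_{i-1}}g_{\theta_{i-1}}(X_{i-1})-P_{\theta_{i-2}}g_{\theta_{i-2}}(X_{i-1})$, a telescoping pair, and a step increment proportional to $\Gamma_i-\Gamma_{i-1}$; the $R^3$-type term is absent. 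Into each piece I would insert the predictable indicator $\charfun{A_{\mathcal{R}}^{i-1}}$. Since $A_{\mathcal{R}}^i$ is decreasing and equals the whole space on $\bigcap_j A_{\mathcal{R}}^j$, the indicatored partial sums agree with the true ones there, so it suffices to prove $\E_{\theta,x}[\sup_{n\ge m}|\cdot|]\to 0$ as $m\to\infty$ for each indicatored piece.

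Bounding the pieces then parallels the treatment of $R^1,R^2,R^4,R^5$. For the martingale term, $\charfun{A_{\mathcal{R}}^{i-1}}$ is $\F_{i-1}$-measurable and $\Gamma_i$ is independent of $(\F_{i-1},X_i)$, so the partial sums remain an $\{\F_n\}$-martingale; Doob's inequality, \eqref{eq:poisson-conv}, \eqref{eq:V-exp-conv} (recalling $\beta_g\le 1/2$, whence $V^{2\beta_g}\le V$) and $\sum_i\E[\Gamma_i^2]<\infty$ force its supremum to vanish. The continuity term is dominated directly by the tail of the series in \eqref{eq:poisson-diff-conv}, after factoring out $\E[\Gamma_i]$ by independence. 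The step increment is handled by Lemma \ref{lemma:random-weights} with $Z_i=\Gamma_i$ and $B_{i-1}=|P_{\theta_{i-2}}g_{\theta_{i-2}}(X_{i-1})|\charfun{A_{\mathcal{R}}^{i-1}}$, whose hypotheses reduce to $\sum_i\E[\Gamma_i^2]<\infty$ and $\sum_i|\E[\Gamma_{i+1}-\Gamma_i]|<\infty$ together with \eqref{eq:poisson-conv} and \eqref{eq:V-exp-conv}.

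The main obstacle is the telescoping pair: once the indicators are present the weights are no longer time-consistent, so the exact telescoping of Theorem \ref{thm:general-noise-theorem} fails. I would resolve this by summation by parts, which reproduces the two boundary terms---bounded, as there, by $\sum_{i\ge m}\E[\Gamma_i^2]\,\E_{\theta,x}[V^{2\beta_g}(X_i)\charfun{A_{\mathcal{R}}^{i-1}}]$ and hence negligible---plus a correction $\sum_i(\charfun{A_{\mathcal{R}}^i}-\charfun{A_{\mathcal{R}}^{i-1}})\Gamma_i P_{\theta_{i-1}}g_{\theta_{i-1}}(X_i)$ supported on the single exit step $T\defeq\inf\{n:\theta_n\notin\mathcal{R}\}$. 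Dominating its supremum by $(\sum_{i\ge m}\E[\Gamma_i^2]\,\E_{\theta,x}[V^{2\beta_g}(X_i)\charfun{A_{\mathcal{R}}^{i-1}}])^{1/2}$ shows it too vanishes. This correction, controlled purely by the square-summability of the step sizes, is the one ingredient not already present in Theorem \ref{thm:general-noise-theorem}.
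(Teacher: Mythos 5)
Your proposal is correct and follows essentially the same route as the paper: substitute $\nabla w\equiv 1$ into the decomposition of Theorem \ref{thm:general-noise-theorem} (so the $R^3$-type term disappears), localise every moment bound with the predictable indicators $\charfun{A_{\mathcal{R}}^{i-1}}$, and treat the martingale, continuity, boundary and step-increment pieces via Doob's inequality, \eqref{eq:poisson-diff-conv}, second moments and Lemma \ref{lemma:random-weights}, respectively. The only divergence is your summation-by-parts correction for the broken telescoping: the paper avoids this entirely by placing a single indicator on the telescoped boundary term, since the decomposition need only agree with the true partial sums on $\bigcap_{i\ge 0}A_{\mathcal{R}}^i$, where all indicators equal one and your correction term (supported on the exit step) vanishes identically — so that extra estimate, while valid, is superfluous.
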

%
The proof of Theorem~\ref{thm:conv-noise-theorem}
is given in Appendix~\ref{sec:conv-noise-theorem}.
%
\begin{remark} 
The condition \eqref{eq:poisson-diff-conv} may be checked in practice
either with Proposition~\ref{prop:continuity} or
with Proposition~\ref{prop:nonsmooth-noise}.
To apply Theorem~\ref{thm:convergence} in the case of random step
sizes, one must check also that $\sum_{i=1}^\infty\Gamma_i$ diverges
almost surely. Assuming the conditions of Theorem~\ref{thm:conv-noise-theorem}, it is sufficient to ensure that
$\sum_{i=1}^\infty\E[\Gamma_i] = \infty$, because
$Z_n:=\sum_{i=1}^n
(\Gamma_i - \E[\Gamma_i])$ form
an a.s. convergent $L^2$-martingale.
\end{remark}
%


\section{Application:
Particle independent Metropolis--Hastings expectation maximisation}\label{sec:application} 

We consider a stochastic approximation expectation maximisation (EM)
algorithm \cite{delyon-lavielle-moulines} for static parameter
maximum likelihood
estimation in time series models, employing
a particle independent Metropolis--Hastings (PIMH) sampler
\cite{andrieu-doucet-holenstein} in order to approximate the
expectation step of the EM algorithm.
We present the generic algorithm in Section~\ref{sec:pimh-algo}. Then, we focus on a specific example
involving a Poisson count model
with an intensity determined by a latent process.
The model is given in Section~\ref{sec:pimh-model} and
the employed particle filter is discussed in Section~\ref{sec:pimh-geom}.
We establish the stability of the algorithm in Section~\ref{sec:mean-field} and conclude with a brief numerical experiment
in Section~\ref{sec:numerical}.

\subsection{Generic PIMH-EM algorithm}\label{sec:pimh-algo} 

We assume a state space setting where a latent process $X_{1:n} :=
(X_1,X_2,\ldots,X_n)$ defined on some measurable space $\mathcal{X}$
gives rise to an observation process $Y_{1:n} :=
(Y_1,Y_2,\ldots,Y_n)$ taking values in a measurable space
$\mathcal{Y}$ and assumed to consist of independent random variables
given the latent process $X_{1:n}$. The process $X_{1:n}$ typically
follows a Markov model parameterised by a vector $\zeta$ taking values
in a measurable parameter space $\Xi$. The conditional marginal
distributions of the observations given the latent process are also
assumed to be parameterised by $\zeta$. This allows one to define the
so-called complete-data likelihood $p_\zeta(x_{1:n},y_{1:n})$ for any
$x_{1:n}\in\mathcal{X}^n$ and $y_{1:n}\in\mathcal{Y}^n$ and, when
applicable, the EM algorithm allows one to iteratively maximise the
likelihood $p_\zeta(y_{1:n})$. We will assume below that for any
$x_{1:n}\in\mathcal{X}^n$ and $y_{1:n}\in\mathcal{Y}^n$ there
exists a
unique parameter value $\hat{\zeta}\in\Xi$ maximising the
complete-data likelihood, which is also assumed to be uniquely
determined through a vector of sufficient statistics taking values in
an open set $\Theta\subset\R^d$.

Application of the EM algorithm requires one to compute the
expectation of the complete-data log-likelihood with respect to
$p_\zeta(\ud x_{1:n}\mid y_{1:n})$. When this is not possible
analytically one resorts to numerical methods, and we focus here on
the use of Markov chain Monte Carlo (MCMC) algorithms. More precisely, we
focus on the use of a methodology recently introduced in
\cite{andrieu-doucet-holenstein} which combines MCMC and particle
filters and is particularly well suited to sampling in state-space
models. Let us denote by $(\tilde{\mathbf{X}}, \mathbf{A})\sim
\PF(y_{1:n},\zeta)$ the full output of a particle filter
targeting the conditional distribution $p_\zeta(\ud x_{1:n}\mid
y_{1:n})$ of the model with the parameter value $\zeta$. This output
consists of all the random variables generated by the particle filter,
that is, the state variables before resampling
$\tilde{\mathbf{X}}\in\mathcal{X}^{n\times N}$ and the ancestor
indices $\mathbf{A}\in\mathbb{N}^{(n-1)\times N}$; see
\cite{andrieu-doucet-holenstein} for details. The sample trajectories
relevant to the approximation of quantities dependent on $p_\zeta(\ud
x_{1:n}\mid y_{1:n})$, denoted $X_{1:n, k}\in\mathcal{X}^{n}$
hereafter, and the associated weights $W_k\in[0,1]$ for $k=1,\ldots,N$
can be recovered from $\tilde{\mathbf{X}}$ and $\mathbf{A}$ through
functions $\bar{x}_{1:n}\dvtx \mathcal{X}^{n\times
N}\times\mathbb{N}^{(n-1)\times N }\times\mathbb{N}\to\mathcal
{X}^n$ and
$\bar{w}\dvtx \mathcal{X}^{n\times N}\times\mathbb{N}^{(n-1)\times N}
\times\mathbb{N}\to[0,1]$, such that
\begin{eqnarray*}
X_{1:n,k} := \bar{x}_{1:n}(\tilde{\mathbf{X}}, \mathbf{A}, k)
\quad \mbox{and} \quad W_k :=\bar{w}(\tilde{\mathbf{X}}, \mathbf{A}, k) .
\end{eqnarray*}
We also introduce a `sufficient statistics' function $t\dvtx \mathcal{X}^n
\times\mathcal{Y}^n \to\Theta$ which, given a set of observations
and one trajectory of the latent state variables, returns the
sufficient statistics underpinning the complete-data likelihood. From
our earlier assumption, we can define the function
$\hat{\zeta}\dvtx \Theta\to\Xi$ which returns the parameter value
maximising the conditional likelihood given some sufficient statistics
$\theta\in\Theta$.

We can now summarise our PIMH-EM algorithm with the projections
$\Pi_{\mathcal{R}_i}\dvtx \Theta\to\mathcal{R}_i$ to the sets
$\mathcal{R}_0\subset\mathcal{R}_1\subset\cdots\subset\Theta$
as follows.
%
\begin{algorithm}\label{alg:pimh-em} 
Choose an initial value for the
parameters $\zeta_0\in\Xi$ and set
%
\begin{eqnarray}
\label{eq:em-init-pf}\bigl(\tilde{\mathbf{X}}^{(0)}, \mathbf{A}^{(0)}_*\bigr)&
\sim& \PF(y_{1:n}, \zeta_0),
\\
\label{eq:em-init-suff}\theta_0 &:=&\Pi_{\mathcal{R}_0} \Biggl[\sum
_{k=1}^{N} W_k^{(0)} t
\bigl(X_{1:n, k}^{(0)}, y_{1:n}\bigr) \Biggr]
 .
\end{eqnarray}
For $i\ge1$, proceed recursively as follows:
%
\begin{eqnarray}
\label{eq:pimh-pf}\bigl(\tilde{\mathbf{X}}^{(i)}_*,
\mathbf{A}^{(i)}_* \bigr)& \sim& \PF \bigl(y_{1:n}, \hat{
\zeta}(\theta_{i-1}) \bigr),
\\
\label{eq:pimh-acc-rej}\bigl(\tilde{\mathbf{X}}^{(i)}, \mathbf{A}^{(i)}\bigr) & :=&
\cases{ \bigl(\tilde{\mathbf{X}}^{(i)}_*,
\mathbf{A}^{(i)}_*\bigr), & \mbox{with probability} $\min \biggl\{1,
\displaystyle \frac{\hat{Z}_{\hat{\zeta}(\theta_{i-1})}
(\tilde{\mathbf{X}}^{(i)}_*)}{
\hat{Z}_{\hat{\zeta}(\theta_{i-1})}(\tilde{\mathbf
{X}}^{(i-1)})} \biggr\}$,
\cr
\bigl(\tilde{\mathbf{X}}^{(i-1)},
\mathbf{A}^{(i-1)}\bigr), &  \mbox{otherwise,}}
\\
\label{eq:robbins-monro-pimh}\theta_{i} & :=&
\Pi_{\mathcal{R}_i} \Biggl[\theta_{i-1}+\Gamma_{i} \Biggl(
\sum_{k=1}^{N} W_k^{(i)}
t\bigl(X_{1:n, k}^{(i)},y_{1:n}\bigr) -
\theta_{i-1} \Biggr) \Biggr] ,
\end{eqnarray}
where the step \eqref{eq:pimh-acc-rej} implements an accept-reject
mechanism, and $\hat{Z}_\zeta(\hat{\mathbf{X}})$ stands for the
estimate of the likelihood $p_\zeta(y_{1:n})$ computed with the given
particles $\hat{\mathbf{X}}$ \cite{andrieu-doucet-holenstein} and
$(\Gamma_i)_{i\ge1}$ is a random step size sequence taking values in
$[0,\infty)$.
\end{algorithm}
%

We can rewrite the steps \eqref{eq:pimh-pf} and \eqref{eq:pimh-acc-rej}
as $(\tilde{\mathbf{X}}^{(i)}, \mathbf{A}^{(i)})\sim
P_{\hat{\zeta}(\theta_{i-1})}^{\mathrm{PIMH}} ((\tilde{\mathbf
{X}}^{(i-1)},
\mathbf{A}^{(i-1)}), \cdot  )$, in terms of a Markov kernel
$P_{\zeta}^{\mathrm{PIMH}}$ with the invariant
distribution $\pi^{\mathrm{PIMH}}_{\zeta}(\ud\tilde{\mathbf
{x}},\ud
\mathbf{a})$.
As shown in \cite{andrieu-doucet-holenstein},
$\pi^{\mathrm{PIMH}}_{\zeta}(\ud\tilde{\mathbf{x}},\ud\mathbf{a})$
has the property that for any function $f\dvtx \mathcal{X}^n\rightarrow
\mathbb{R}$
\[
\int\sum_{k=1}^N \bar{w}(\tilde{
\mathbf{x}},\mathbf{a},k) f \bigl(\bar{x}_{1:n}(\tilde{\mathbf{x}},
\mathbf{a},k) \bigr) \pi^{\mathrm{PIMH}}_{\zeta}(\ud\tilde{\mathbf{x}},\ud
\mathbf{a}) = \int f(x_{1:n}) p_{\zeta}(\ud x_{1:n}|y_{1:n})
,
\]
whenever the integrals above are
well-defined. Note that it is possible to further improve on this
scheme by using smoothing
procedures within the particle filtering procedure, but we do not
consider such a possibility here.
Given this, we define $H (\theta,
(\tilde{\mathbf{x}},\mathbf{a}) ) :=
\sum_{k=1}^N \bar{w}(\tilde{\mathbf{x}},\mathbf{a},k)
t (\bar{x}_{1:n}(\tilde{\mathbf{x}},\mathbf{a},\allowbreak k) )-\theta$.
Assuming $\Pi_{\mathcal{R}_i}(\theta)=\theta$ for all
$\theta\in\mathcal{R}_i$,
we can rewrite
\eqref{eq:pimh-pf}--\eqref{eq:robbins-monro-pimh}
in our generic stochastic approximation framework as follows
%
\begin{eqnarray}\label{eq:pimh-em-reprojection}
\mathfrak{X}_{i} &\sim& P_{\theta_{i-1}}(\mathfrak{X}_{i-1}
, \cdot ),
\nonumber
\\
\theta_{i}^* &=& \theta_{i-1} + \Gamma_{i} H(
\theta_{i-1},\mathfrak{X}_{i}),
\\
\theta_{i} &=& \theta_{i}^*\mathbb{I}\bigl\{
\theta_{i}^*\in\mathcal{R}_{i}\bigr\} +
\theta_{i}^{\mathrm{proj}} \mathbb{I}\bigl\{\theta_{i}^*
\notin\mathcal {R}_{i}\bigr\}  ,\nonumber
\end{eqnarray}
where $\mathfrak{X}_i:=(\tilde{\mathbf{X}}^{(i)},\mathbf
{A}^{(i)})$ stands
for the state variable, $P_{\theta_i}:=
P_{\hat{\zeta}(\theta_{i})}^{\mathrm{PIMH}}$
and $\theta_{i}^{\mathrm{proj}}=\Pi_{\mathcal{R}_i}(\theta_i^*)$.
Note also that
the initial value $\theta_0$ computed in
\eqref{eq:em-init-pf} and \eqref{eq:em-init-suff} belongs to the
initial projection set
$\mathcal{R}_0$.
%
\begin{remark} 
A similar algorithm to our PIMH-EM algorithm has been independently
developed recently by Donnet and Samson \cite{donnet-samson}. They
apply the algorithm to the problem of maximum likelihood estimation of
static parameters in continuous-time diffusion models. Our work
differs in various ways: at a theoretical level, Donnet and Samson
\cite{donnet-samson} (essentially) assume a compact state space
$\mathcal{X}$, which, among other things, eliminates the need to
establish the stability of the recursion. At a methodological level,
apart from the stabilisation procedure through the expanding
projections scheme, our algorithm differs in that we use a random step
size sequence, which allows us to consider families of Markov kernels
$\{P_\theta\}_{\theta\in\Theta}$ which do not satisfy
H\"{o}lder-continuity as discussed in Section~\ref{sec:continuous-kernels}.
\end{remark}
%


\subsection{Example: Poisson count model with random intensity}\label{sec:pimh-model} 

Our specific example is a Poisson count model with an intensity determined
by a autoregressive process
\cite{zeger,chan-ledolter,fort-moulines}.
The latent stationary $\AR(1)$ process is determined by an initial
distribution $X_1\sim N(0,(1-\rho^2)^{-1}\sigma^2)$ and for $2\le
k\le
n$ through
\[
X_{k} = \rho X_{k-1} + \sigma\epsilon_{k},
\]
where $\epsilon_k$ are independent standard Gaussian random variables.
The observations are conditionally independent following the law
\[
Y_k \mid X_k \sim\Poisson \bigl(\mathrm{e}^{\alpha+ X_k} \bigr)
.
\]
For brevity, we keep $\rho\in(-1,1)$ and $\sigma^2>0$ fixed,
so that the unknown parameter of the model is
$\zeta:=\alpha\in\Xi:=\R$.

The complete data log-likelihood for the model considered satisfies
$\log (p_{\zeta}(x_{1:n},y_{1:n}) ) = L(x_{1:n},\zeta) + c$
where $c=c(\rho,\sigma^2)\in\R$ is a constant and
\[
L(x_{1:n},\zeta) := \sum_{i=1}^{n}
\bigl[y_{i}(\alpha+x_{i})-\mathrm{e}^{\alpha+x_{i}} \bigr] -
\frac{1}{2\sigma^{2}} \Biggl[ x_{1}^{2} + x_{n}^2
+ \bigl(1+\rho^{2} \bigr)\sum_{i=2}^{n-1}x_{i}^{2}
-2\rho\sum_{i=2}^{n}x_{i}x_{i-1}
\Biggr] .
\]
Let us introduce a sufficient statistics function
$t(x_{1:n},y_{1:n}) :=t(x_{1:n}) :=\sum_{i=1}^{n}\mathrm{e}^{x_i}$
taking values in $\Theta:=(0,\infty)$.
Then, denoting with
$\E_{\zeta}$ the expectation with respect to
$p_{\zeta} (\ud x_{1:n}|y_{1:n} )$,
we can write the mean field of the stochastic approximation as
\[
h(\theta)=\E_{\hat{\zeta}(\theta)} \bigl(t(X_{1:N}) \bigr)-\theta.
\]
It is straightforward to check that
the unique parameter value maximising the
complete-data likelihood is
$\hat{\zeta}(\theta):=\hat{\alpha}(\theta) =
\log (\frac{\bar{y}}{\theta} )$, where $\bar{y}:=
\sum_{i=1}^n y_i$.


\subsection{Particle filter for the example}\label{sec:pimh-geom} 
\label{sec:pimh-pf}

We use the $\AR(1)$ process prior as a proposal distribution in our
particle filter, that is,
%
\begin{equation}\label{eq:proposal}
q_{\zeta}(x_i\mid x_{1:i-1}, y_{1:i})
:=p_{\zeta}(x_i\mid x_{i-1}) = N
\bigl(x_i; \rho x_{i-1}, \sigma^2\bigr) .
\end{equation}
For our convenience, we
augment the state space by adding an artificial initial state
$X_0\sim N(0,(1-\rho^2)^{-1}\sigma^2)$ with no associated observations,
which we sample perfectly.

For our analysis, we need to
quantify the dependence on $\zeta$ of the (geometric) rates of
ergodicity of
the PIMH kernel for a particular drift function.
We shall see that for this it is sufficient to upper bound the weights of
the particle filter and to lower bound the true likelihood.
%
\begin{proposition}\label{prop:weight-bound} 
The weights of the particle filter for $1\le i\le n$
%
\begin{equation}\label{eq:particle-weights}
w_\zeta(x_i,x_{i-1}) :=\frac{p_\zeta(y_i\mid x_i) p_\zeta
(x_i\mid
x_{i-1})}{q_{\zeta}(x_i\mid x_{1:i-1}, y_{1:i})}
\end{equation}
with the proposal distribution
$q_{\zeta}(x_i\mid x_{1:i-1}, y_{1:i})$ given in \eqref{eq:proposal},
applied to the model described in Section~\ref{sec:pimh-model} satisfy for all $i\ge1$
%
\begin{equation}\label{eq:particle-weight-bound}
\sup_{(x_i,x_{i-1})\in\R^2} w_\zeta(x_i,x_{i-1}) \le1
.
\end{equation}
\end{proposition}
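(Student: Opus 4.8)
The plan is to exploit the bootstrap structure of the proposal to collapse the weight to the observation likelihood, and then to bound this Poisson mass function uniformly over the latent state.

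First I would substitute the proposal \eqref{eq:proposal} into the definition \eqref{eq:particle-weights}. Since $q_{\zeta}(x_i \mid x_{1:i-1}, y_{1:i}) = p_{\zeta}(x_i \mid x_{i-1})$, the transition density in the numerator cancels exactly against the proposal in the denominator, leaving $w_\zeta(x_i, x_{i-1}) = p_\zeta(y_i \mid x_i)$, which no longer depends on $x_{i-1}$. Hence the supremum over $(x_i, x_{i-1}) \in \R^2$ in \eqref{eq:particle-weight-bound} reduces to a supremum over $x_i \in \R$ alone.

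Writing the Poisson mass with intensity $\lambda \defeq e^{\alpha + x_i}$, we have $p_\zeta(y_i \mid x_i) = \lambda^{y_i} e^{-\lambda}/y_i!$, and as $x_i$ ranges over $\R$ the intensity $\lambda$ ranges over $(0,\infty)$. It therefore suffices to show $\sup_{\lambda > 0} \lambda^{y_i} e^{-\lambda}/y_i! \le 1$. For $y_i \ge 1$, differentiating $\lambda \mapsto \lambda^{y_i} e^{-\lambda}$ shows the maximiser is $\lambda = y_i$, with maximal value $y_i^{y_i} e^{-y_i}/y_i!$, while the case $y_i = 0$ yields the monotone function $e^{-\lambda} \le 1$ directly. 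That the maximal value does not exceed one then follows from a single-term lower bound on the exponential series, $e^{y_i} = \sum_{k \ge 0} y_i^k/k! \ge y_i^{y_i}/y_i!$, which rearranges to $y_i^{y_i} e^{-y_i}/y_i! \le 1$. I do not anticipate any genuine obstacle, since the result rests only on this elementary maximisation; the only points meriting mild care are the exact cancellation in the first step, which is specific to the bootstrap-type proposal, and the separate treatment of $y_i = 0$.
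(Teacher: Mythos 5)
Your proposal is correct and follows essentially the same route as the paper: the prior (bootstrap) proposal cancels the transition density, so the weight equals the observation likelihood $p_\zeta(y_i\mid x_i)$, which is then bounded by one. The paper simply notes that this bound is immediate because the observation is discrete (a probability mass function value never exceeds one), whereas you verify it by explicitly maximising the Poisson mass over the intensity; both justifications are fine.
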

%
%
\begin{pf} 
Because we use the prior proposal, the particle weights are determined
by the likelihood. The observations are discrete, so the likelihood
is upper bounded by one.
\end{pf}
%
%
\begin{proposition}\label{prop:likelihood-bound} 
The log-likelihood of the model satisfies, with
$\bar{y}:=\sum_{i=1}^n y_i$, the bound
%
\begin{equation}\label{eq:likelihood-bound}
\log p_\zeta(y_{1:n}) \ge- \sum
_{i=1}^n \log y_i! + \bar{y} \alpha -
n\exp \biggl(\alpha+ \frac{\sigma^2}{2(1-\rho^2)} \biggr) .
\end{equation}
\end{proposition}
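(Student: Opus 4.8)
The plan is to lower bound the log-likelihood by Jensen's inequality, after expressing the likelihood as an expectation over the latent process under the AR(1) prior. Writing the marginal likelihood as
\[
    p_\zeta(y_{1:n}) = \int p_\zeta(y_{1:n}\mid x_{1:n})\, p_\zeta(x_{1:n})\,\ud x_{1:n}
    = \E\big[p_\zeta(y_{1:n}\mid X_{1:n})\big],
\]
where the expectation is taken with respect to the stationary AR(1) law of $X_{1:n}$, the concavity of the logarithm yields
\[
    \log p_\zeta(y_{1:n}) \ge \E\big[\log p_\zeta(y_{1:n}\mid X_{1:n})\big].
\]

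Next I would expand the conditional log-likelihood. Since the observations are conditionally independent Poisson variables with intensities $e^{\alpha+x_i}$,
\[
    \log p_\zeta(y_{1:n}\mid x_{1:n}) = \sum_{i=1}^n \big[y_i(\alpha+x_i) - e^{\alpha+x_i} - \log y_i!\big],
\]
so it remains to take expectations term by term. The crucial observation is that, under the prescribed dynamics, the process $(X_k)$ is strictly stationary with every marginal $X_i \sim N\big(0, (1-\rho^2)^{-1}\sigma^2\big)$. Hence $\E[X_i] = 0$, and the Gaussian moment generating function gives $\E[e^{X_i}] = \exp\big(\sigma^2/(2(1-\rho^2))\big)$.

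Substituting these moments, the cross terms $y_i \E[X_i]$ vanish and each $e^\alpha \E[e^{X_i}]$ contributes the same constant, so that
\[
    \E\big[\log p_\zeta(y_{1:n}\mid X_{1:n})\big]
    = \bar{y}\alpha - n\exp\Big(\alpha + \frac{\sigma^2}{2(1-\rho^2)}\Big) - \sum_{i=1}^n \log y_i!,
\]
which is precisely \eqref{eq:likelihood-bound}.

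The only point requiring care---though it is routine---is verifying the marginal law of $X_i$: one checks that $\sigma^2/(1-\rho^2)$ is the fixed point of the variance recursion $v \mapsto \rho^2 v + \sigma^2$, so that the initial law $X_1 \sim N(0,(1-\rho^2)^{-1}\sigma^2)$ propagates unchanged through $X_k = \rho X_{k-1} + \sigma \epsilon_k$, rendering the process stationary. Once this is established, the evaluation of $\E[e^{X_i}]$ via the normal moment generating function is immediate, and the bound follows.
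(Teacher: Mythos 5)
Your proof is correct and follows essentially the same route as the paper's: writing the marginal likelihood as an expectation of the conditional Poisson likelihood over the stationary AR(1) prior, applying Jensen's inequality to the logarithm, and evaluating $\E[e^{X_i}]$ via the Gaussian (log-normal) moment formula with stationary variance $\sigma^2/(1-\rho^2)$. The added verification of stationarity is a harmless elaboration of a step the paper takes for granted.
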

%
%
\begin{pf} 
We may write the log-likelihood in terms of an expectation with
respect to the stationary latent process $X_{1:n}$,
and use Jensen's inequality to obtain
%
%
\begin{eqnarray*}
\log p_\zeta(y_{1:n}) &=& \log\E \Biggl[ \prod
_{i=1}^n p(y_{i}\mid X_i,
\zeta) \Biggr] \ge\sum_{i=1}^n \E \bigl[
\log p(y_{i}\mid X_i, \zeta) \bigr]
\\
&=& \sum_{i=1}^n \E \bigl[
y_i(\alpha+ Z) - \mathrm{e}^{\alpha+ Z} - \log(y_i!) \bigr],
\end{eqnarray*}
where $Z$ follows the stationary distribution of $X_{1:n}$,
that is, $Z$ is zero-mean Gaussian with the variance
$\sigma_Z^2:=(1-\rho^2)^{-1} \sigma^2$. By recalling that the mean
of a log-Gaussian random variable $\mathrm{e}^Z$ is
$\exp (\sigma_Z^2/2)$,
we obtain the desired
bound \eqref{eq:likelihood-bound}.
\end{pf}
%

We now turn to the particle independent Metropolis--Hastings (PIMH) kernel
in this context. Denote by $q_\zeta^{\PF}$ the overall
distribution of the random variables $(\tilde{\mathbf{X}},\mathbf{A})$
generated by the particle filter with the proposal distribution
$q_{\zeta}(x_i\mid x_{1:i-1}, y_{1:i})$ given in \eqref{eq:proposal}
and targeting $p_\zeta(x_{1:n},y_{1:n})$.
The PIMH is nothing but an ordinary
independent Metropolis--Hastings algorithm with the
proposal distribution $q_{\zeta}^{\PF}$ and the target
distribution $\pi_\zeta^{\mathrm{PIMH}}$.

\begin{proposition}\label{prop:pimh-ratio} 
The ratio of the overall distribution of the particle filter and
the target density satisfies
the bound
%
\begin{equation} \label{eq:pf-bound}
\inf_{(\tilde{\mathbf{x}},\mathbf{a})\in\mathsf{X}} \frac{\ud q^{\PF}_\zeta
}{\ud\pi^{\mathrm{PIMH}}_\zeta} (\tilde{\mathbf{x}},\mathbf{a}) \ge
c_1 \exp \bigl[\bar{y}\alpha -c_2 \mathrm{e}^{\alpha}
\bigr] ,
\end{equation}
with constants $c_1=c_1(y_{1:n})>0$ and
$c_2=c_2(\rho,\sigma^2,n)>0$.
\end{proposition}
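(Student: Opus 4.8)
The plan is to exploit the defining property of the particle independent Metropolis--Hastings construction of \cite{andrieu-doucet-holenstein}, which is exactly what makes the acceptance ratio in \eqref{eq:pimh-acc-rej} a ratio of likelihood estimates. Recall that $P^{\mathrm{PIMH}}_\zeta$ is an independent Metropolis--Hastings kernel with proposal $q^{\mathrm{PF}}_\zeta$ and invariant law $\pi^{\mathrm{PIMH}}_\zeta$, and that the extended-space target is built so that
\[
\frac{\ud \pi^{\mathrm{PIMH}}_\zeta}{\ud q^{\mathrm{PF}}_\zeta}(\tilde{\mathbf{x}},\mathbf{a})
= \frac{\hat{Z}_\zeta(\tilde{\mathbf{x}})}{p_\zeta(y_{1:n})}\ ,
\]
where $\hat{Z}_\zeta(\tilde{\mathbf{x}})$ is the unbiased particle filter estimate of the likelihood and $p_\zeta(y_{1:n})$ is the true likelihood. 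Inverting this identity reduces the claim to bounding $p_\zeta(y_{1:n})/\hat{Z}_\zeta(\tilde{\mathbf{x}})$ from below uniformly in $(\tilde{\mathbf{x}},\mathbf{a})$, so it suffices to bound $\hat{Z}_\zeta$ above by a constant and $p_\zeta(y_{1:n})$ below.

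First I would bound the likelihood estimate. With the prior proposal \eqref{eq:proposal}, the incremental weights reduce to the observation densities $w_\zeta(x_i,x_{i-1}) = p_\zeta(y_i\mid x_i)$, and $\hat{Z}_\zeta(\tilde{\mathbf{x}})$ is the product over $i=1,\dots,n$ of the empirical averages $N^{-1}\sum_{k=1}^N w_\zeta(x_i^k,\uarg)$. Each such average is a convex combination of weights, each of which is at most $1$ by Proposition \ref{prop:weight-bound}; hence every factor lies in $[0,1]$ and therefore $\hat{Z}_\zeta(\tilde{\mathbf{x}})\le 1$ for every realisation $(\tilde{\mathbf{x}},\mathbf{a})$.

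It then remains to lower bound $p_\zeta(y_{1:n})$, which is precisely the content of Proposition \ref{prop:likelihood-bound}. Combining the two bounds yields
\[
\frac{\ud q^{\mathrm{PF}}_\zeta}{\ud \pi^{\mathrm{PIMH}}_\zeta}(\tilde{\mathbf{x}},\mathbf{a})
= \frac{p_\zeta(y_{1:n})}{\hat{Z}_\zeta(\tilde{\mathbf{x}})}
\ge p_\zeta(y_{1:n})
\ge \exp\!\Big(-{\textstyle\sum_{i=1}^n}\log y_i!\Big)\exp\!\big(\bar{y}\alpha - c_2 e^{\alpha}\big)\ ,
\]
which is the asserted bound with $c_1 \defeq \exp(-\sum_{i=1}^n \log y_i!) = \prod_{i=1}^n (y_i!)^{-1}>0$ depending only on $y_{1:n}$ and $c_2 \defeq n\exp\big(\sigma^2/(2(1-\rho^2))\big)>0$ depending only on $(\rho,\sigma^2,n)$, as required. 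The only genuinely non-routine step is the first: one must invoke the extended-target construction of \cite{andrieu-doucet-holenstein} to justify that the Radon--Nikodym derivative of $\pi^{\mathrm{PIMH}}_\zeta$ with respect to $q^{\mathrm{PF}}_\zeta$ equals the normalised likelihood estimate, and in particular that this derivative is well-defined and strictly positive everywhere so that its reciprocal is meaningful inside the infimum; everything else reduces to the two preceding propositions.
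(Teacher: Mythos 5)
Your proof is correct and follows essentially the same route as the paper: both invoke the identity $\ud\pi^{\mathrm{PIMH}}_\zeta/\ud q^{\mathrm{PF}}_\zeta = \hat{Z}_\zeta/p_\zeta(y_{1:n})$ from \cite{andrieu-doucet-holenstein}, bound $\hat{Z}_\zeta\le 1$ via the weight bound of Proposition \ref{prop:weight-bound}, and lower bound $p_\zeta(y_{1:n})$ via Proposition \ref{prop:likelihood-bound}. You merely make explicit the constants $c_1$ and $c_2$ and the positivity of the Radon--Nikodym derivative, which the paper leaves implicit.
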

%
%
\begin{pf} 
In case of the Particle IMH, \cite{andrieu-doucet-holenstein}, page
299,
\begin{eqnarray*}
\frac{\ud\pi^{\mathrm{PIMH}}_\zeta
}{\ud q^{\PF}_\zeta} (\tilde{\mathbf{x}},\mathbf{a}) = \frac{\hat{Z}_\zeta(\tilde{\mathbf{x}},\mathbf{a})}{Z_\zeta} =
{\prod_{k=1}^n \frac{1}{N} \sum_{i=1}^N
w_\zeta \bigl( \tilde{x}_{k,i},\tilde{x}_{k-1,i}^{a}  \bigr)
}\Bigl/{p_\zeta(y_{1:n})
} ,
\end{eqnarray*}
where $N$ is the number of particles,
$w_\zeta$ are the unnormalised particle weights given in
\eqref{eq:particle-weights} and $\tilde{x}_{k,i}$
and $\tilde{x}_{k-1,i}^{a}$ stand for the $i$th particle at time $k$ and
its ancestor, respectively.
The bound \eqref{eq:pf-bound} follows directly
from the bounds \eqref{eq:particle-weight-bound} and
\eqref{eq:likelihood-bound} established in
Propositions \ref{prop:weight-bound} and
\ref{prop:likelihood-bound}, respectively.
\end{pf}
%

The bound on the ratio of the proposal and target densities in
Proposition~\ref{prop:pimh-ratio} ensures a uniform ergodicity of the
PIMH sampler. We, however, must be able to analyse
the ergodic behaviour of the algorithm for unbounded functions.
Therefore, we consider geometric ergodicity with a certain `drift'
function $V$, which will allow us to control averages of functions $f$
such that $\sup_{x\in\mathsf{X}} |f(x)|/V(x)<\infty$.
%
\begin{proposition}\label{prop:pimh-drifts} 
Let $q_\zeta^{\PF}(\ud\tilde{\mathbf{x}},\ud\mathbf{a})$
stand for the
overall proposal density of the particle filter with\vspace*{1pt} the one-step
proposal density
$q_{\zeta}(x_i\mid x_{1:i-1}, y_{1:i})$ given in \eqref{eq:proposal}
and denote
\[
V(\tilde{\mathbf{x}},\mathbf{a}) :=\sum_{i=1}^n
\sum_{j=1}^N \mathrm{e}^{2|\tilde{x}_i^{j}|} .
\]
Then, the following bounds hold
%
\begin{eqnarray}
\label{eq:proposal-V-bound}q_\zeta(V) &\le&2 n N^n \exp \biggl(\frac{2\sigma^2}{1-\rho^2}
\biggr),
\\
\label{eq:H-ratio-V}\sup_{(\tilde{\mathbf{x}},\mathbf{a}) \in\mathsf{X}} \frac{H(\theta,
(\tilde{\mathbf{x}},\mathbf{a}))}{V^{1/2}(\tilde{\mathbf
{x}},\mathbf{a})} & \le&\sqrt{n}N + \frac{|\theta|}{V^{1/2}(\tilde{\mathbf
{x}},\mathbf{a})} .
\end{eqnarray}
\end{proposition}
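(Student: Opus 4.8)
The plan is to treat the two bounds \eqref{eq:proposal-V-bound} and \eqref{eq:H-ratio-V} separately. For \eqref{eq:proposal-V-bound} I would control $\E_{q_\zeta^{\mathrm{PF}}}[V]$ by reducing it to the exponential moments of the individual particles. Writing $e^{2|t|}\le e^{2t}+e^{-2t}$ and using linearity, it suffices to bound $\phi_i(s)\defeq \E_{q_\zeta^{\mathrm{PF}}}[e^{s\tilde{x}_i^j}]$ for $s=\pm 2$, where by exchangeability $\phi_i(s)$ is independent of $j$. The key observation is that each particle is propagated through the Gaussian proposal \eqref{eq:proposal}, so conditioning on the $\sigma$-algebra $\mathcal{F}_{i-1}$ generated by the first $i-1$ generations (including the resampled ancestor index $a$ of $\tilde{x}_i^j$) gives $\E[e^{s\tilde{x}_i^j}\mid \mathcal{F}_{i-1}]=\exp(s\rho \tilde{x}_{i-1}^{a}+\tfrac{1}{2}s^2\sigma^2)$.

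The main step is the resulting recursion. Since $a\in\{1,\dots,N\}$ I would bound the (weight-dependent, hence awkward) selected term crudely and selection-agnostically by the full sum, $e^{s\rho\tilde{x}_{i-1}^{a}}\le\sum_{k=1}^N e^{s\rho\tilde{x}_{i-1}^{k}}$, which costs a factor $N$ but removes any need to analyse the resampling law. Taking expectations yields $\phi_i(s)\le N e^{s^2\sigma^2/2}\phi_{i-1}(s\rho)$ at each step that involves resampling, and $\phi_1(s)\le e^{s^2\sigma^2/2}\phi_0(s\rho)$ for the resampling-free first propagation (the artificial state $X_0$ carries no observation). Starting from the exactly stationary layer $\tilde{x}_0^k\sim N(0,\sigma_Z^2)$ with $\sigma_Z^2\defeq(1-\rho^2)^{-1}\sigma^2$, so $\phi_0(s)=e^{s^2\sigma_Z^2/2}$, an induction gives $\phi_i(s)\le N^{i-1}e^{s^2\sigma_Z^2/2}$ for all $i\ge 1$. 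Here the exponent telescopes exactly, since at each stage $\tfrac12 s^2(\sigma^2+\rho^2\sigma_Z^2)=\tfrac12 s^2\sigma_Z^2$ by the defining relation for $\sigma_Z^2$; the accumulated variance collapses to the stationary value irrespective of $i$, which is what produces the clean constant $\exp(2\sigma^2/(1-\rho^2))$ rather than a compounding one. Summing, $\E[e^{2|\tilde{x}_i^j|}]\le 2N^{i-1}e^{2\sigma_Z^2}$, whence $q_\zeta(V)=\sum_{i=1}^n\sum_{j=1}^N \E[e^{2|\tilde{x}_i^j|}]\le 2e^{2\sigma_Z^2}\sum_{i=1}^n N^{i}\le 2nN^n e^{2\sigma_Z^2}$, which is \eqref{eq:proposal-V-bound}.

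For \eqref{eq:H-ratio-V} I would argue deterministically. Writing $H(\theta,(\tilde{\mathbf{x}},\mathbf{a}))=\sum_{k=1}^N W_k\, t(X_{1:n,k})-\theta$ with weights $W_k=\bar{w}(\tilde{\mathbf{x}},\mathbf{a},k)\in[0,1]$ and $t(X_{1:n,k})=\sum_{i=1}^n e^{x_{i,k}}\ge 0$, the triangle inequality separates the $-\theta$ term. For each trajectory, Cauchy--Schwarz gives $t(X_{1:n,k})=\sum_{i=1}^n e^{x_{i,k}}\le \sqrt{n}\,\big(\sum_{i=1}^n e^{2|x_{i,k}|}\big)^{1/2}\le \sqrt{n}\,V^{1/2}$, since each component $x_{i,k}$ equals some particle value $\tilde{x}_i^{j}$ and hence $\sum_{i=1}^n e^{2|x_{i,k}|}\le\sum_{i=1}^n\sum_{j=1}^N e^{2|\tilde{x}_i^j|}=V$. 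Using $0\le W_k\le 1$ then gives $\sum_{k=1}^N W_k\,t(X_{1:n,k})\le \sum_{k=1}^N t(X_{1:n,k})\le \sqrt{n}N\,V^{1/2}$, and dividing by $V^{1/2}$ yields \eqref{eq:H-ratio-V}; the same estimate bounds $|H|/V^{1/2}$ as well.

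I expect the only genuine obstacle to be the resampling in the first bound: the ancestor index $a$ follows the normalised weights, which depend intricately on all particles. The plan deliberately sidesteps this through the selection-agnostic inequality $e^{s\rho\tilde{x}^{a}}\le\sum_k e^{s\rho\tilde{x}^{k}}$, trading one factor $N$ per generation (the source of the $N^n$) for complete independence from the resampling distribution; the only care required is to count the resampling steps correctly, namely $i-1$ to reach layer $i$ thanks to the observation-free initial state, so that the final power of $N$ is $n$ rather than $n+1$.
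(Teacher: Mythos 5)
Your proof is correct and follows essentially the same route as the paper: the selection-agnostic bound costing a factor $N$ per resampling step (your recursion on $\phi_i$ is just the unrolled version of the paper's crude bound $q_\zeta(V)\le\sum_{i=1}^n N^i\,\E[e^{2|\hat X_i|}]$ over ancestral lineages, each marginally distributed as the stationary AR(1) prior, with your telescoping of the variance making the stationarity explicit), and for \eqref{eq:H-ratio-V} the same $|\bar w|\le 1$ plus Cauchy--Schwarz argument. No gaps.
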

%
%
\begin{pf} 
The overall proposal density of the particle filter without selection
$\hat{q}_\zeta(x_{1:n})$ is in fact the finite-dimensional
distribution of the stationary $\AR(1)$ prior.
Denote by $\hat{X}_{1:n}\sim\hat{q}_\zeta$.
We obtain by a crude bound
\[
q_\zeta(V) \le\sum_{i=1}^n
N^i \E \bigl[\mathrm{e}^{2|\hat{X}_i|} \bigr] \le n N^n
\sup_{1\le i\le n} \E \bigl[\mathrm{e}^{-2\hat{X}_i} + \mathrm{e}^{2\hat
{X}_i} \bigr] .
\]
Our $\hat{X}_i$ are Gaussian with zero mean and variance
$\sigma^2/(1-\rho^2)$, and
$\E[\exp(\pm\hat{X}_i)]
=  \exp (\Var(\hat{X}_i)/2 )$.
We obtain \eqref{eq:proposal-V-bound}.

Consider then \eqref{eq:H-ratio-V}. Because $|\bar{w}|\le1$, we have
\[
\bigl|H(\theta,(\tilde{\mathbf{x}},\mathbf{a})\bigr| \le N \sup_{1\le k\le N} \bigl|t \bigl(
\bar{x}_{1:n}(\tilde{\mathbf{x}},\mathbf{a}, k) \bigr)\bigr| + |\theta| .
\]
Because $\bar{x}_{1:n}$ only chooses a path among the state variables
$\tilde{\mathbf{x}}$ and the sufficient statistics of the chosen paths
satisfy
\[
t \bigl(\bar{x}_{1:n}(\tilde{\mathbf{x}},\mathbf{a}, k)
\bigr)^2 = \Biggl(\sum_{i=1}^n
\exp \bigl( \bar{x}_{i}(\tilde{\mathbf{x}},\mathbf{a}, k) \bigr)
\Biggr)^2 \le n \sum_{i=1}^n
\exp\bigl(2\bar{x}_{i}(\tilde{\mathbf{x}},\mathbf {a}, k)\bigr) ,
\]
where $\bar{x}_{i}(\tilde{\mathbf{x}},\mathbf{a},
k)=\tilde{x}_{i,j(k,i)}$ for some integer $1\le j(k,i)\le N$.
Therefore, $|t (\bar{x}_{1:n}(\tilde{\mathbf{x}},\mathbf{a},
k) )|
\le\sqrt{n}V^{1/2}(\tilde{\mathbf{x}},\mathbf{a})$,
and we get \eqref{eq:H-ratio-V}.
\end{pf}
%


\subsection{Stability of the PIMH-EM}\label{sec:mean-field} 

We already have most of the ingredients to establish the stability of
the PIMH-EM algorithm with expanding projections applied
to our example Poisson count model with random intensity.
What remains is to identify a Lyapunov function $w$ for the sufficient
statistic.
For this purpose, we study the properties of the mean field
$h(\theta)$.
%
\begin{proposition}\label{prop:example-mean-field} 
For any constant $c\in(1,\infty)$ there exists a $c_\theta=c_\theta(c,
\sigma^2,\rho, y_{1:n})\in(0,1]$ such
that
%
\begin{eqnarray}
\label{eq:h-alpha-small}h(\theta) &\ge& c \theta^{1-({1}/{2})\mathbf{1}^T \Sigma^{-1}
\mathbf{1} \log\theta} \qquad \mbox{for all } \theta\in
\bigl(0,c_\theta],
\\
\label{eq:h-alpha-big}h(\theta) &\le&-c^{-1}\theta \qquad \mbox{for all } \theta
\in[c_\theta^{-1},\infty\bigr).
\end{eqnarray}
\end{proposition}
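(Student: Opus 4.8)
The plan is to analyse the mean field $h(\theta) = \E_{\hat{\zeta}(\theta)}[t(X_{1:n})] - \theta$ in the two regimes $\theta \to 0^+$ and $\theta \to \infty$, using the explicit form $\hat{\zeta}(\theta) = \hat{\alpha}(\theta) = \log(\bar{y}/\theta)$. The key observation is that $h(\theta) = \E_{\hat{\alpha}(\theta)}[t(X_{1:n})] - \theta$, where $t(X_{1:n}) = \sum_{i=1}^n e^{X_i}$ and the expectation is taken under the smoothing distribution $p_{\hat{\alpha}(\theta)}(\ud x_{1:n} \mid y_{1:n})$. Since the latent dynamics (governed by $\rho, \sigma^2$) are fixed and only the intensity offset $\alpha$ varies with $\theta$, the whole $\theta$-dependence enters through $\hat{\alpha}(\theta)$ in the conditional likelihood $\prod_i p(y_i \mid x_i) \propto \exp(y_i(\alpha + x_i) - e^{\alpha + x_i})$.

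First I would write the smoothing expectation explicitly as a ratio of integrals against the Gaussian AR(1) prior, so that $\E_{\hat{\alpha}(\theta)}[e^{X_j}]$ becomes $\int e^{x_j} \prod_i e^{y_i(\hat{\alpha}+x_i) - e^{\hat{\alpha}+x_i}} \,\ud\mu_{\mathrm{AR}}(x) \big/ \int \prod_i e^{y_i(\hat{\alpha}+x_i) - e^{\hat{\alpha}+x_i}} \,\ud\mu_{\mathrm{AR}}(x)$, where $\mu_{\mathrm{AR}}$ is the stationary Gaussian law with covariance matrix $\Sigma = \sigma^2 \Sigma_0$ (so that $\Sigma^{-1}$ produces the quadratic form with the $(1+\rho^2)$ and $-2\rho$ entries appearing in $L$). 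For the small-$\theta$ bound \eqref{eq:h-alpha-small}, note that $\theta \to 0^+$ forces $\hat{\alpha}(\theta) = \log(\bar{y}/\theta) \to +\infty$. I expect that in this regime the Poisson log-likelihood term $-e^{\alpha + x_i}$ dominates and concentrates the smoothing law on configurations where $e^{X_i}$ is large, so that $\E_{\hat{\alpha}}[t(X)]$ grows; the target exponent $1 - \tfrac{1}{2}\mathbf{1}^T\Sigma^{-1}\mathbf{1}\log\theta$ suggests that a Laplace/saddle-point analysis of the Gaussian integral yields a dominant factor $\exp(\tfrac{1}{2}\mathbf{1}^T\Sigma^{-1}\mathbf{1} \cdot (\log\theta)^2 \cdot(\text{const}))$ after substituting $\hat{\alpha} = \log\bar{y} - \log\theta$. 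The plan is to lower bound $\E_{\hat{\alpha}}[e^{X_j}]$ by a single favourable Gaussian integral (shifting the mean of the prior by completing the square in the exponent $\sum_i x_i - \tfrac{1}{2} x^T\Sigma^{-1}x$), which produces exactly the quadratic-in-$\log\theta$ correction, and then absorb all lower-order and boundary effects into the constant $c_\theta$ by taking $\theta$ small enough.

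For the large-$\theta$ bound \eqref{eq:h-alpha-big}, observe that $\theta \to \infty$ forces $\hat{\alpha}(\theta) \to -\infty$, so the intensity $e^{\hat{\alpha} + X_i} \to 0$ and the conditional likelihood degenerates toward the prior (the observations carry vanishing weight). In that limit $\E_{\hat{\alpha}(\theta)}[t(X_{1:n})] \to \E_{\mathrm{prior}}[t(X_{1:n})] = \sum_i \E_{\mathrm{prior}}[e^{X_i}] = n\exp(\sigma^2/(2(1-\rho^2)))$, a finite constant. Hence $h(\theta) = \E_{\hat{\alpha}(\theta)}[t(X)] - \theta \le C - \theta$ for a fixed $C$, and for any $c > 1$ this is bounded above by $-c^{-1}\theta$ once $\theta$ exceeds $C/(1 - c^{-1})$, which determines $c_\theta^{-1}$. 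The main technical content here is a uniform (in $\theta$ large) bound showing $\E_{\hat{\alpha}(\theta)}[t(X)] \le C$, which I would obtain by upper bounding the smoothing expectation by a constant multiple of the prior expectation using that the likelihood weights $\prod_i \exp(y_i(\hat\alpha + x_i) - e^{\hat\alpha + x_i})$ are, after dividing by the normalising constant, controlled uniformly for $\hat\alpha$ bounded above.

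The main obstacle will be the small-$\theta$ regime: controlling the normalising constant (the marginal likelihood $p_{\hat{\alpha}(\theta)}(y_{1:n})$) and the numerator simultaneously as $\hat\alpha \to \infty$, and extracting precisely the Gaussian quadratic form $\mathbf{1}^T\Sigma^{-1}\mathbf{1}$ with the correct factor $\tfrac12$ in the exponent of $\theta$. The delicate point is that both numerator and denominator blow up, and their ratio must retain the stated $\theta^{1 - \frac12 \mathbf{1}^T\Sigma^{-1}\mathbf{1}\log\theta}$ growth; I expect this requires a careful Laplace-type expansion where the leading exponential behaviour in $\hat\alpha$ cancels between numerator and denominator, leaving the subleading Gaussian-fluctuation term responsible for the $(\log\theta)^2$ correction. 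Since only a lower bound is needed, I would sidestep the full asymptotic expansion by choosing a single explicit shifted Gaussian comparison measure and estimating the resulting finite-dimensional integrals directly.
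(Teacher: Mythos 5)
Your treatment of the large-$\theta$ regime is essentially the paper's: the paper simply notes that $\lim_{\theta\to\infty}h(\theta)/\theta=-1$ by dominated convergence applied to the ratio representation of $h$, and your uniform bound on $\E_{\hat{\alpha}(\theta)}[t(X_{1:n})]$ achieves the same thing (though note the limiting smoothing law is the prior tilted by $\exp(\sum_i y_i x_i)$, not the prior itself --- the factor $e^{\bar{y}\hat{\alpha}}$ cancels between likelihood and normalising constant but the $e^{y_i x_i}$ factors do not; this is harmless for boundedness).

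The small-$\theta$ regime, however, has a genuine gap, in two places. First, the device you propose --- completing the square in $\sum_i x_i-\tfrac12 x^T\Sigma^{-1}x$ --- shifts the Gaussian mean by the $\theta$-\emph{independent} vector $\Sigma\mathbf{1}$ and therefore cannot produce the $\tfrac12\mathbf{1}^T\Sigma^{-1}\mathbf{1}(\log\theta)^2$ term in the exponent. That term arises because the likelihood factor $-e^{\hat{\alpha}+x_i}=-\bar{y}e^{x_i}/\theta$ pins the posterior near $x\approx\log\theta\cdot\mathbf{1}$, so the relevant recentering is $x\mapsto x-\log\theta\,\mathbf{1}$ (equivalently the substitution $u_i=e^{x_i}/\theta$ used in the paper), which evaluates the Gaussian prior at $\log\theta\cdot\mathbf{1}$ and factors out $\theta^{-\frac12\mathbf{1}^T\Sigma^{-1}\mathbf{1}\log\theta}$ explicitly. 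Second, and more fundamentally, you only describe how to \emph{lower bound} $\E_{\hat{\alpha}(\theta)}[t(X_{1:n})]$, but the quantity to be bounded below is the difference $\E_{\hat{\alpha}(\theta)}[t(X_{1:n})]-\theta$, and $\E_{\hat{\alpha}(\theta)}[t(X_{1:n})]$ is itself of order $\theta$ (up to a logarithmic factor). A restriction-to-a-favourable-region lower bound on the expectation that loses even a constant factor gives nothing once you subtract $\theta$; you would need $\E_{\hat{\alpha}(\theta)}[t(X_{1:n})]\ge(1+\delta)\theta$, which requires exactly the matching two-sided control of numerator and normalising constant that you defer as ``the main obstacle''. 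The paper sidesteps this by keeping the subtraction inside the integral, writing $h(\theta)=\theta\,N_h/D_h$ with $N_h=\int(\sum_i u_i-1)\,\theta^{-\mathbf{1}^T\Sigma^{-1}\log u}g_\Sigma(u)\,\ud u$ times the explicit power of $\theta$, and then showing by a sign partition ($\sum_i u_i<1$ versus $\ge 1$) that the negative contribution vanishes and the positive one diverges as $\theta\to0^+$, while $D_h\le c\,\theta^{\bar{y}}$ follows from a pointwise maximisation of each factor $\exp(y_ix_i-\bar{y}e^{x_i}/\theta)$. Without some analogue of this signed analysis, your plan does not close.
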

%
%
\begin{pf} 
Observe first that
we may write, up to a constant,
\begin{eqnarray*}
p_\zeta(x_{1:n},y_{1:n}) = \det\bigl(
\Sigma^{-1/2}\bigr) \exp \Biggl( - \frac{1}{2} x_{1:n}^T
\Sigma^{-1} x_{1:n} + \sum_{i=1}^{n}
\bigl[y_{i}(\alpha+x_{i})-\mathrm{e}^{\alpha+x_{i}} \bigr] \Biggr)
,
\end{eqnarray*}
where $\Sigma^{-1}=\Sigma^{-1}(\rho,\sigma^2)\in\R^{n\times n}$
is a symmetric and positive definite matrix with
all elements equal to zero except the diagonal elements which satisfy
$\Sigma^{-1}_{1,1} =
\Sigma^{-1}_{n,n} = 1/\sigma^{2}$ and $\Sigma^{-1}_{2,2} = \cdots=
\Sigma^{-1}_{n-1,n-1} = (1+\rho^2)/\sigma^2$,
and the first diagonal above and
below the main diagonal which are such that
$\Sigma^{-1}_{i,i-1} = \Sigma^{-1}_{i-1,i} =
-\rho/\sigma^2$
for $i=2,\ldots,n$.

We may write the mean field as
%
\begin{eqnarray}\label{eq:drift-posterior-expectation}
h(\theta) &= &\int_{\R^n} \Biggl(\sum
_{i=1}^n \mathrm{e}^{x_i} - \theta \Biggr)
\frac{p_{\hat{\alpha}(\theta)}(x_{1:n},y_{1:n})}{
p_{\hat{\alpha}(\theta)}(y_{1:n})} \,\ud x_{1:n}
\nonumber
\\
&=& \theta{
\int_{\R^n} \exp \Biggl(-\frac{1}{2} x^T \Sigma^{-1} x
+ \sum_{i=1}^n y_i x_i - \bar{y}\sum_{i=1}^n
\frac{\mathrm{e}^{x_i}}{\theta} \Biggr)
\Biggl(\sum_{i=1}^n \frac{\mathrm{e}^{x_i}}{\theta} - 1 \Biggr)
\,\ud x}\\
&&{}\Bigl/{
\int_{\R^n}
\exp \Biggl(-\frac{1}{2} x^T \Sigma^{-1} x
+ \sum_{i=1}^n y_i x_i - \bar{y}\sum_{i=1}^n
\frac{\mathrm{e}^{x_i}}{\theta} \Biggr)
\,\ud x
}  .\nonumber
\end{eqnarray}
For \eqref{eq:h-alpha-big}, it is enough to observe that
by dominated convergence
$\lim_{\theta\to\infty} h(\theta)/\theta= -1$.

Let us then consider the case where
$\theta$ is small \eqref{eq:h-alpha-small}.
Denote the numerator in \eqref{eq:drift-posterior-expectation} by $N_h$,
and use the change of
variables $u_i :=e^{x_i}/\theta$ for all $i=1,\ldots,n$ to write
\begin{eqnarray*}
&&N_h = \int_{\R_+^n}  \exp \biggl(-
\frac{1}{2} (\log\theta\times\mathbf{1}+\log u)^T
\Sigma^{-1}(\log\theta\times\mathbf{1}+\log u) \biggr) \Biggl(\sum
_{i=1}^n u_i -1 \Biggr)
\\
&&\hphantom{N_h = \int_{\R_+^n}}{}\times \exp \Biggl(\sum_{i=1}^n
y_i \log(\theta u_i) - \bar{y}\sum
_{i=1}^n u_i \Biggr)
\frac{\ud u}{\prod_{i=1}^n
u_i} ,
\end{eqnarray*}
where we use the convention $\log u :=[\log u_1,\ldots,\log u_n]^T$
and $\mathbf{1}:=[1,\ldots,1]^T$.
By rearranging the terms, this can be written as
%
\begin{equation}\label{eq:nominator}
N_h = \theta^{\bar{y}
-({1}/{2})\mathbf{1}^T \Sigma^{-1}
\mathbf{1} \log\theta} \int_{\R_+^n}
\theta^{-\mathbf{1}^T \Sigma^{-1}
\log u } \Biggl( \sum_{i=1}^n
u_i -1 \Biggr) g_\Sigma(u) \,\ud u ,
\end{equation}
where the function $g_\Sigma$ is independent of $\theta$ and
for all $u\in\R_+^n$ and all $\Sigma^{-1}\in\R^{n\times n}$,
\begin{eqnarray*}
g_\Sigma(u) :=\exp \Biggl(-\frac{1}{2} \log u^T
\Sigma^{-1} \log u + \sum_{i=1}^n
(y_i-1) \log u_i - \bar{y}\sum
_{i=1}^n u_i \Biggr) >0 .
\end{eqnarray*}
We shall partition the domain $\R_+^n$ according to the sign of the
integrand in \eqref{eq:nominator} as $I_-
:=\{u\in\R_+^n\dvt \sum_{i=1}^n u_i < 1\}$ and $I_+ :=
\R_+^n\setminus I_-$.
Observe that for all $u\in I_-$, the elements of $\log u$ are
all negative, and the row sums of $\Sigma^{-1}$ are
all positive. Therefore, $-\mathbf{1}^T \Sigma^{-1} \log u > 0$ for
all $u\in I_-$ and because the integral is finite for any
fixed $\theta>0$,
\[
\lim_{\theta\to0+} \int_{I_-} \theta^{-\mathbf{1}^T \Sigma^{-1}
\log u }
\Biggl( \sum_{i=1}^n u_i -1
\Biggr) g_\Sigma(u)\, \ud u = 0 .
\]
On the other hand, considering the subset $\hat{I}_+ :=\{u\in\R_+^n\dvt
\forall i=1,\ldots,n\ \log(u_i)>0 \}\subset I_+$, then similarly
$-\mathbf{1}^T \Sigma^{-1} \log u < 0$ for all $u\in\hat{I}_+$,
whence
\[
\lim_{\theta\to0+} \int_{\hat{I}_+} \theta^{-\mathbf{1}^T \Sigma^{-1}
\log u }
\Biggl( \sum_{i=1}^n u_i -1
\Biggr) g_\Sigma(u)\, \ud u = \infty.
\]
Overall, we deduce that for any constant $c'>0$ there exists a
$c_\theta=c_\theta(c',\Sigma,y_{1:n})>0$ such that for all
$\theta\in(0,c_\theta)$,
\[
N_h \ge c' c_\Sigma\theta^{
\bar{y} -({1}/{2})\mathbf{1}^T \Sigma^{-1}
\mathbf{1} \log\theta} >
0 .
\]

We are left with upper bounding the denominator $D_h$ in
\eqref{eq:drift-posterior-expectation}, which we write as an
expectation with respect to a random variable $X\sim N(0,\Sigma)$
\[
D_h = c_\Sigma\E \Biggl[ \exp \Biggl(\sum
_{i=1}^n y_i X_i -
\frac{\bar{y}}{\theta} \sum_{i=1}^n
\mathrm{e}^{X_i} \Biggr) \Biggr] .
\]
By elementary calculus, one can compute that for $y,\bar{y},\theta>0$
\[
\sup_{x\in\R} \exp \biggl(yx-\frac{\bar{y}}{\theta}\mathrm{e}^x \biggr) =
\theta^y \exp \biggl( y \log\frac{y}{\bar{y}} - y \biggr) ,
\]
so $D_h \le c_{y_{1:n},\Sigma} \theta^{\bar{y}}$, and
we deduce \eqref{eq:h-alpha-small}
by choosing $c'$ sufficiently large.
\end{pf}
%

Now we are ready to establish the stability of the PIMH-EM in our
example setting.
%
\begin{proposition}\label{prop:pimh-em-stability} 
Consider Algorithm \ref{alg:pimh-em} applied to the model specified in
Section~\ref{sec:pimh-model}, with the projections
\eqref{eq:pimh-em-reprojection}. The projection sets
are defined as $\mathcal{R}_i:=
\{\theta\in\Theta\dvt  \underline{\theta}_i\le\theta\le\bar{\theta
}_i\}$
and the projections as
$\theta_i^\mathrm{proj}:=(\underline{\theta}_i\vee\theta_i^*)
\wedge\bar{\theta}_i$, with the constant sequences
$\underline{\theta}_i\downarrow0$ and $\bar{\theta}_i\uparrow
\infty$
satisfying
\[
\liminf_{i\to\infty} \underline{\theta}_i \log(i) =\infty
\quad \mbox{and}\quad  \limsup_{i\to\infty} \frac{\bar{\theta}_i}{i^{\epsilon}} = 0
\]
for all $\epsilon>0$.
The step sizes are defined as $\Gamma_i :=c_\gamma i^{-\eta_\gamma} \mathbb{I}\{U_i \le c_p
i^{-\eta_p}\}$
where $c_\gamma,c_p\in(0,\infty)$, and the constants
$\eta_\gamma,\eta_p\in(0,1)$ satisfy
$\eta_\gamma+\eta_p<1$, $2\eta_\gamma+\eta_p>1$ and
$\eta_\gamma+2\eta_p>1$, and
$(U_i)_{i\ge1}$ are uniform $(0,1)$ distributed
random variables independent on the history $\F_{i-1}$ and $X_i$.

Then, there exists $0<c_1< c_2<\infty$ such that
for any $(\theta,x)\in\mathcal{R}_0\times\mathsf{X}$,
\[
\P_{\theta,x} \Biggl(\bigcup_{m=1}^\infty
\bigcap_{n=m}^\infty \{c_1 \le
\theta_i \le c_2 \} \Biggr) = 1 .
\]
\end{proposition}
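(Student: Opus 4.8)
The plan is to recast Algorithm \ref{alg:pimh-em} in the generic form \eqref{eq:pimh-em-reprojection} (already carried out in Section \ref{sec:pimh-algo}) and then to apply the bounded Lyapunov stability result, Theorem \ref{th:bounded-w-stability}, whose conclusion $\P_{\theta,x}(\bigcup_m\bigcap_{n\ge m}\{\theta_n\in\mathcal{W}_M\})=1$ is precisely of the required form once we exhibit a Lyapunov function whose level sets are compact intervals. The unbounded result Theorem \ref{th:unbounded-w-stability} would only yield a \emph{random} compact containment, so the stringent drift of Condition \ref{cond:stringent-drift} is unavoidable. First I would take $w=\psi\circ\tilde w$ with $\tilde w(\theta)=(\log\theta)^2$: this is decreasing on $(0,1]$ and increasing on $[1,\infty)$, diverges at both points of $\partial\Theta=\{0,\infty\}$, and its level sets are compact intervals $[c_1,c_2]\subset(0,\infty)$. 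The drift sign follows from Proposition \ref{prop:example-mean-field}, since $\tilde w'(\theta)=2\log\theta/\theta$ has the sign of $\log\theta$, so $\langle\nabla\tilde w,h\rangle\le0$ outside a bounded level set using $h(\theta)\le-c^{-1}\theta$ for large $\theta$ and $h(\theta)>0$ for small $\theta$. A concave increasing $\psi$ preserves this sign, and choosing $\psi'(u)$ to decay like $e^{-2\sqrt u}$ controls the Hessian: near $\theta=e^{-s}\to0$ one has $\tilde w'\sim e^{s}$ and $\tilde w''\sim e^{2s}$ (while both decay near $\infty$), so the factor $\psi'(\tilde w)$ forces $C_w<\infty$ in Condition \ref{cond:w}\eqref{item:w-hessian}. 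The projections $\theta_i^{\mathrm{proj}}=(\underline\theta_i\vee\theta_i^*)\wedge\bar\theta_i$ are metric projections onto the increasing intervals $\mathcal{R}_i$ covering $(0,\infty)$, hence contractions with $|\theta_i-\theta_{i-1}|\le|\theta_i^*-\theta_{i-1}|$ and, by monotonicity of $w$, with $w(\theta_i^{\mathrm{proj}})\le w(\theta_i^*)$, giving Condition \ref{cond:w}\eqref{item:proj} and the hypothesis of Theorem \ref{thm:general-noise-theorem}.

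Next I would supply the ergodicity input. The PIMH kernel is an independent Metropolis--Hastings chain whose proposal/target ratio is uniformly bounded below by Proposition \ref{prop:pimh-ratio}; combined with the uniform bound on $q_\zeta(V)$ and the tail estimate on $H$ from Proposition \ref{prop:pimh-drifts}, this yields a simultaneous drift-and-minorisation as in Condition \ref{cond:drift-mino} on sublevel sets of $V$, because the acceptance probability tends to one in the tail of $V$, where the Poisson likelihood and hence $\ud\pi/\ud q$ vanish. Proposition \ref{prop:drift-to-erg} then delivers explicit rates $M_{\theta,r},\rho_{\theta,r}$. The crucial observation is that, with $\zeta=\hat\alpha(\theta)=\log(\bar y/\theta)$, the minorisation constant degrades only like $\exp(c_2\bar y/\theta)$ as $\theta\to0$ and like $\theta^{\bar y}$ as $\theta\to\infty$, so on $\mathcal{R}_i=[\underline\theta_i,\bar\theta_i]$ the hypotheses $\liminf\underline\theta_i\log i=\infty$ and $\limsup\bar\theta_i/i^{\epsilon}=0$ make both rates, together with $\sup_{\mathcal{R}_i}|H|$ and $\sup_{\mathcal{R}_i}|\nabla w|$, grow \emph{subpolynomially} in $i$. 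Hence Condition \ref{cond:geom-rates} and Condition \ref{cond:general-noise-theorem-cond}\eqref{eq:H-bound},\eqref{eq:V-exp-bound} hold with $\beta_H=1/2$ and with exponents $\alpha_M,\alpha_\rho,\alpha_H,\alpha_V,\alpha_w$ that may be taken arbitrarily small.

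With these in hand the noise conditions follow from Proposition \ref{prop:nonsmooth-noise}: since every $\xi$-exponent can be made as small as desired while the strict inequalities $2\eta_\gamma+\eta_p>1$ and $\eta_\gamma+2\eta_p>1$ hold, the series \eqref{eq:xi-random-1}--\eqref{eq:xi-random-2} converge, so Condition \ref{cond:general-noise-theorem-cond} is satisfied. Theorem \ref{thm:general-noise-theorem} then supplies \eqref{eq:square-summable-noise} (whence Condition \ref{cond:abstract-growth}\eqref{eq:incr-vanish},\eqref{eq:square-summable} via the lemma following Condition \ref{cond:abstract-growth}) and \eqref{eq:noise-negligible-poisson} (whence Condition \ref{cond:abstract-growth}\eqref{eq:noise-summable} and the negligibility \eqref{eq:noise-negligible} required by Theorem \ref{th:bounded-w-stability}).

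The main obstacle, and the final step, is Condition \ref{cond:stringent-drift}: I must show $\delta_i=\inf_{\theta\in\mathcal{R}_i\setminus\mathcal{W}_{M_0}}-\langle\nabla w,h\rangle>0$ with $\sum_i\Gamma_i\delta_i=\infty$ a.s. The difficulty is that near $\theta=0$ the true drift is very weak: Proposition \ref{prop:example-mean-field} gives $-\langle\nabla\tilde w,h\rangle\gtrsim|\log\theta|\exp(-\tfrac12\mathbf{1}^{\T}\Sigma^{-1}\mathbf{1}(\log\theta)^2)$, and the extra factor $\psi'(\tilde w)\sim\exp(-2|\log\theta|)$ weakens it further, so $\delta_i\to0$; by monotonicity in $s=|\log\theta|$ the infimum is attained at the endpoints $\theta=\underline\theta_i$ and $\theta=\bar\theta_i$. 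The point is that, writing $s_i=|\log\underline\theta_i|$ and $s_i^{+}=\log\bar\theta_i$, the conditions $\underline\theta_i\log i\to\infty$ and $\bar\theta_i=o(i^{\epsilon})$ force $s_i\le\log\log i$ and $s_i^{+}=o(\log i)$, so each exponential factor above is of the form $i^{-o(1)}$ and therefore $\delta_i\ge i^{-o(1)}$ is subpolynomial. Since $\E[\Gamma_i]\asymp i^{-\eta_\gamma-\eta_p}$ with $\eta_\gamma+\eta_p<1$ \emph{strictly}, this gives $\sum_i\E[\Gamma_i]\delta_i=\sum_i i^{-\eta_\gamma-\eta_p-o(1)}=\infty$; finally, as $\Gamma_i\delta_i=\gamma_i\delta_i\charfun{U_i\le p_i}$ are independent, bounded, and have divergent expected sum, a Kolmogorov three-series argument upgrades this to $\sum_i\Gamma_i\delta_i=\infty$ $\P_{\theta,x}$-a.s. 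Theorem \ref{th:bounded-w-stability} then yields the claim with $[c_1,c_2]=\mathcal{W}_M$ for any fixed $M>M_0$.
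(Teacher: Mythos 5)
Your overall architecture coincides with the paper's: reduce to Theorem \ref{th:bounded-w-stability} by verifying Condition \ref{cond:stringent-drift} via Proposition \ref{prop:example-mean-field}, and obtain the noise conditions from Theorem \ref{thm:general-noise-theorem} and Proposition \ref{prop:nonsmooth-noise} after checking that every constant ($1/\hat{\epsilon}(\theta)$, $\sup_{\mathcal{R}_i}|H|$, the ergodicity rates) grows only subpolynomially on $\mathcal{R}_i=[\underline{\theta}_i,\bar{\theta}_i]$ thanks to $\liminf_i\underline{\theta}_i\log i=\infty$ and $\bar{\theta}_i=o(i^{\epsilon})$, so that the strict inequalities $2\eta_\gamma+\eta_p>1$, $\eta_\gamma+2\eta_p>1$ and $\eta_\gamma+\eta_p<1$ leave room for arbitrarily small $\xi$-exponents. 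Your treatment of the stringent drift (subpolynomial $\delta_i$, divergence of $\sum\E[\Gamma_i]\delta_i$, then an a.s.\ upgrade by a three-series/$L^2$-martingale argument) is exactly the paper's. Two deviations are worth noting. First, the paper takes a much simpler Lyapunov function, a mollified version of $|\theta-c_\theta^*|$, which has $|\nabla w|\le 1$ (so $\alpha_w=0$) and gives a drift bounded below by a \emph{constant} for large $\theta$; your $w=\psi\circ(\log\theta)^2$ also works, but your specific choice $\psi'(u)\sim e^{-2\sqrt u}$ sits exactly at the critical threshold and \emph{fails} Condition \ref{cond:w}\eqref{item:w-hessian}: a direct computation gives, for $\theta<1$, $w''(\theta)=2-2|\log\theta|\to-\infty$, because $e^{-2|\log\theta|}=\theta^2$ exactly cancels the $\theta^{-2}$ in $\tilde w''$ and the remaining $\psi''(\tilde w)(\tilde w')^2=-4|\log\theta|$ term dominates. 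Taking $\psi'(u)=e^{-c\sqrt u}$ with $c>2$ repairs this (then $w''=O(\theta^{c-2}|\log\theta|)\to0$) without affecting the drift estimate, which remains $i^{-o(1)}$ at both endpoints. Second, for the ergodicity rates the paper does not go through Condition \ref{cond:drift-mino} and Proposition \ref{prop:drift-to-erg}; it uses the IMH-specific Corollary \ref{cor:imh-bound-geometric}, which converts the minorisation constant $\hat{\epsilon}(\theta)=\inf\ud q^{\mathrm{PF}}/\ud\pi^{\mathrm{PIMH}}$ and the bound $q_\zeta(V)<\infty$ of Proposition \ref{prop:pimh-drifts} directly into $\hat{M}=c\hat{\epsilon}^{-2}$ and $\hat{\rho}=1-\hat{\epsilon}/2$. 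Your route through a simultaneous drift-and-minorisation is in principle viable (one absorbs $q_\zeta(V)$ into $\lambda V$ outside a sublevel set of $V$ and restricts $\pi$ to that set for the minorising measure), and the much worse power of $\hat{\epsilon}^{-1}$ it produces is harmless since only subpolynomiality matters; but as written you do not actually verify Condition \ref{cond:drift-mino}, and the stated mechanism (``the acceptance probability tends to one in the tail of $V$'') is not the relevant one — what drives both the drift and the minorisation is simply $\inf q/\pi>0$ together with $q(V)<\infty$, as in Proposition \ref{prop:imh-bound}. With those two repairs your argument matches the paper's proof.
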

%
%
\begin{pf} 
Let $c_\theta\in(0,1)$ be the constant from Proposition~\ref{prop:example-mean-field} applied with, say, $c=1$, and
define $\hat{w}(\theta) :=
|\theta-c_\theta^*|$ with $c_\theta^*:=(c_\theta+c_\theta^{-1})/2$.
Define $w$ as the smoothed version of $\hat{w}$ through
the convolution $w:=\hat{w}*\phi$
with a $C^\infty$-mollifier $\phi$ supported on a sufficiently small
$[-\epsilon_\phi,\epsilon_\phi]$, so that
$w=\hat{w}$ on $(0,c_\theta]\cup[c_\theta^{-1},\infty)$.
Then, $w$ is twice differentiable with bounded derivatives,
$w(\theta)< w(\theta')$ for all $\theta\in
\mathcal{W}_{M_0} = [c_\theta,c_\theta^{-1}]$ and
$\theta'\in\R\setminus\mathcal{W}_{M_0}$, where
$M_0:=c_\theta^*-c_\theta>0$.
To sum up, letting $\xi_i:=i\vee1$ for $i\ge0$,
Conditions \ref{cond:w}\ref{item:w-hessian},
\ref{item:projection-sets},
\ref{item:proj} and \ref{item:xi} hold with
$\alpha_w=0$ and with some constant $c<\infty$.

Now, we turn into establishing Condition \ref{cond:stringent-drift}.
The bounds from Proposition~\ref{prop:example-mean-field}
imply
$\delta:=\inf_{\theta\ge c_\theta} - \langle
h(\theta),\nabla w (\theta)\rangle> 0$
and
\begin{eqnarray*}
\delta_i &:=& \inf_{\theta\in[\underline{\theta}_i, c_\theta^{-1}]} - \bigl\langle h(\theta),\nabla w
(\theta)\bigr\rangle \ge c\inf_{\theta\in[\underline{\theta}_i, c_\theta^{-1}]} \theta^{1-c_h \log(\theta)}
\\
&=& c \underline{\theta}_i^{1-c_h \log(\underline{\theta}_i)} \ge c_1 (\log
i)^{-c_2 \log\log i}
\end{eqnarray*}
for $i\ge2$, where $c_1,c_2\in(0,\infty)$.
Therefore, with our choice of the step
sizes
$\sum_{i=1}^\infty(\delta\wedge\delta_i) \E[\Gamma_i] = \infty$,
implying that $\sum_{i=1}^\infty(\delta\wedge\delta_i) \Gamma_i =
\infty$
almost surely.\footnote{The random variables $Z_n:=\sum_{i=1}^n
(\delta\wedge\delta_i)
(\Gamma_i - \E[\Gamma_i])$ form an a.s. convergent $L^2$-martingale.}

Recalling that $\hat{\alpha}(\theta) = \log(\bar{y}/\theta)$,
we bound by Proposition~\ref{prop:pimh-ratio}
\[
\hat{\epsilon}(\theta) := \inf_{(\tilde{\mathbf{x}},\mathbf{a})\in\mathsf{X}} \frac{\ud q^{\PF}_{\hat{\alpha}(\theta)}
}{\ud\pi^{\mathrm{PIMH}}_{\hat{\alpha}(\theta)} } (\tilde{
\mathbf{x}},\mathbf{a}) \ge c_1 \biggl( \frac{\mathrm{e}^{-c_2/\theta}}{\theta}
\biggr)^{\bar{y}} ,
\]
where $c_1,c_2<\infty$ are constants independent of $\theta$.
Now, fix an $\varepsilon>0$. Then, it is straightforward to check that
there exists a constant $c<\infty$ such that
for all $i\ge1$
\[
\sup_{\theta\in\mathcal{R}_i} \frac{1}{\hat{\epsilon}(\theta)} = \biggl(\sup_{\theta\in[\underline{\theta}_i, 1]}
\frac{1}{\hat{\epsilon}(\theta)} \biggr)\vee \biggl(\sup_{\theta\in[1,\bar{\theta}_i]} \frac{1}{\hat{\epsilon}(\theta)}
\biggr) \le c \xi_i^{\varepsilon} .
\]
Without loss of generality, we may assume
$\hat{\epsilon}(\theta)\le1/2$, so
Corollary~\ref{cor:imh-bound-geometric} implies that the $P_{\theta}$
is geometrically ergodic with constants
$\hat{M}=\hat{M}(\hat{\epsilon}(\theta))=c\hat{\epsilon
}^{-2}(\theta)$
and
$\hat{\rho}=\hat{\rho}(\hat{\epsilon}(\theta))=(1-\hat{\epsilon
}(\theta)/2)$.
It is easy to see that then Condition \ref{cond:geom-rates} holds with
$\alpha_M = 2\varepsilon$ and
$\alpha_\rho=\varepsilon$.

Let $V$ be defined as in
Proposition~\ref{prop:pimh-drifts}. Then, there exists a
constant $c<\infty$ such that
\[
\sup_{\theta\in\mathcal{R}_i} \bigl\| H(\theta, \cdot )\bigr\|_{V^{1/2}} \le c_2
+ \sup_{\theta\in\mathcal{R}_i} |\theta| = c_2 + \bar{\theta}_i
\le c \xi_i^{\varepsilon} ,
\]
implying Condition
\ref{cond:general-noise-theorem-cond}\ref{eq:H-bound} with $\beta_H
= 1/2$ and $\alpha_H = \varepsilon$.
The drift condition assumed in Lemma~\ref{lemma:drift} holds with
$\lambda_i=1-\inf_{\theta\in\mathcal{R}_i}\hat{\epsilon}(\theta
)$ and
$b_i=b<\infty$ due to
Corollary~\ref{cor:imh-bound-geometric}. This implies
Condition \ref{cond:general-noise-theorem-cond}\ref{eq:V-exp-bound} with $\alpha_V = \alpha_\rho= \varepsilon$.

Now, Proposition~\ref{prop:nonsmooth-noise} is applicable as soon as
we choose $\varepsilon>0$ above sufficiently small so that
\[
\alpha_w + \alpha_M + \alpha_\rho+
\alpha_H + \beta_H \alpha_V < (
\eta_\gamma+2\eta_p-1)\wedge\frac{2\eta_\gamma+\eta_p-1}{2} .
\]
Proposition~\ref{prop:nonsmooth-noise} implies
Condition \ref{cond:general-noise-theorem-cond}, allowing us to
establish the noise condition in Theorem~\ref{thm:general-noise-theorem}.
Finally, Theorem~\ref{th:bounded-w-stability} yields the claim with
$c_1 = c_\theta$ and $c_2 = c_{\theta}^{-1}$.
\end{pf}
%
We remark that the condition for $\bar{\theta}_i$ in Proposition~\ref{prop:pimh-em-stability} can be relaxed by only assuming it to
hold with a certain fixed $\epsilon>0$ depending on $\bar{y}$,
$\eta_\gamma$ and $\eta_p$.


%
\begin{figure}[b]

\includegraphics{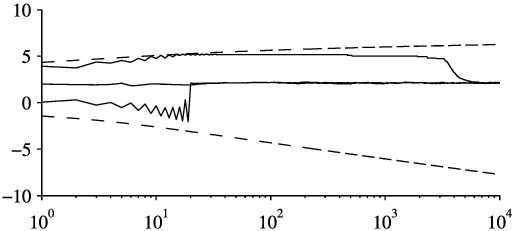}

\caption{Trajectories of the estimate $\hat{\alpha}(\theta_i)$
corresponding the PIMH-EM started from three different
initial values for $\hat{\alpha}_0$. The dashed lines correspond to
the boundaries induced to $\hat{\alpha}(\theta_i)$ by
$(\underline{\theta}_i)_{i\ge0}$ and $(\bar{\theta}_i)_{i\ge0}$.
Notice the logarithmic scale on the $x$-axis (iterations).}
\label{fig:trajectories}
\end{figure}
%

\subsection{Numerical experiment}\label{sec:numerical} 

We illustrate our algorithm briefly in practice in the setup of Proposition~\ref{prop:pimh-em-stability}. We consider the same setting
as Fort and Moulines \cite{fort-moulines}: we have
$n=100$ simulated observations of
the model of Section~\ref{sec:pimh-model}
with parameters $\alpha=2$, $\rho= 0.4$ and $\sigma^2=1$.

We use the following projection sequences to control the sufficient
statistic
\[
\underline{\theta}_i :=\underline{c}\log^{\underline{\epsilon}-1}(i+2)
\quad \mbox{and} \quad \bar{\theta}_i :=\bar{c}_1
(i+2)^{\bar{c}_2/\log^{\bar{\epsilon}}(i+2)} ,
\]
with the constants
$\underline{c}=0.1 m_\theta$, $\bar{c}_1 = 10 m_\theta$,
$\underline{\epsilon}=\bar{\epsilon} = 0.1$ and $\bar{c}_2 = 1$,
where
$m_\theta:=n\exp (\frac{\sigma^2}{2(1-\rho^2)} )$
is the prior expectation
of the sufficient statistic.
The step size sequence parameters are $c_\gamma=6$, $c_p = 3$ and
$\gamma_\eta=\gamma_p = 0.35$. The number of particles is set to
$N=1000$.

Figure~\ref{fig:trajectories} shows the trajectories of
the estimates $\hat{\alpha}(\theta_i)$ for
$10\mathord{,}000$ iterations of the algorithm starting from three
different initial values $\hat{\alpha}_0 \in\{0,2,4\}$. The final values
of the estimates $\hat{\alpha}$ are within 2.10--2.16. The average
acceptance rate during the runs varied between 46--72\%.
Notice the unstable initial behaviour of
the estimates in Figure~\ref{fig:trajectories},
which is controlled by the projections.
\begin{appendix}
\section{Geometric ergodicity from drift condition}\label{sec:drift}
Before the proof of Proposition~\ref{prop:drift-to-erg},
we restate the result by Meyn and
Tweedie \cite{meyn-tweedie-computable}
upon which the proof relies.
%
\begin{theorem}[(Meyn and Tweedie
\cite{meyn-tweedie-computable} Theorem~2.3)]\label{th:m-t-computable} 
Suppose Condition \ref{cond:drift-mino} holds.
Then, for all $k\ge0$ and $\|f\|_{V}<\infty$
\begin{eqnarray*}
\bigl|P_s^k(x,f)-\pi(f)\bigr| \le V(x) (1+\gamma)
\frac{\rho}{\rho-\vartheta}\rho^k \|f\|_{V}
\end{eqnarray*}
for any $\rho>\vartheta=1-\tilde{M}^{-1}$, for
\begin{eqnarray*}
\tilde{M} = \frac{1}{(1-\check{\lambda})^2} \bigl[1-\check{\lambda}+\check {b}+
\check{b}^2 + \bar{\zeta} \bigl( \check{b}(1-\check{\lambda})+ \check
{b}^2 \bigr) \bigr] ,
\end{eqnarray*}
defined in terms of
\[
\gamma= \delta^{-2} [4b+2\delta\lambda v ],\qquad  \check{\lambda} = (
\lambda+\gamma)/(1+\gamma)<1 \quad \mbox{and}\quad  \check{b} = v + \gamma< \infty,
\]
and the bound
\begin{eqnarray*}
\bar{\zeta} \le \frac{4-\delta^2}{\delta^5} \biggl(\frac{b}{1-\lambda}
\biggr)^2 .
\end{eqnarray*}
\end{theorem}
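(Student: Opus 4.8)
The plan is to recognise that Theorem~\ref{th:m-t-computable} is a verbatim restatement of Theorem~2.3 of \cite{meyn-tweedie-computable}, so the primary task is not to reprove it but to check that Condition~\ref{cond:drift-mino} supplies exactly the hypotheses assumed there and that the displayed constants $\gamma$, $\check{\lambda}$, $\check{b}$, $\bar{\zeta}$ and $\tilde{M}$, together with $\vartheta=1-\tilde{M}^{-1}$ and any $\rho>\vartheta$, coincide with theirs after matching notation. Concretely, I would verify that the drift $PV\le\lambda V+b\,\charfun{x\notin C}$, the minorisation $P(x,A)\ge\delta\nu(A)$ with $\nu$ a probability measure concentrated on $C$ (which here holds for all $x$ and in particular on $C$, hence is at least as strong as the on-$C$ minorisation their argument requires), and the finiteness $v=\sup_{x\in C}V(x)<\infty$ are precisely the three ingredients feeding their bound, and that our $b$, $\lambda$, $\delta$, $v$ enter their formulae for $\gamma$, $\check{\lambda}$, $\check{b}$ and $\bar{\zeta}$ in the stated way. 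Once this identification is made the conclusion is literally their theorem and nothing further is required.

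Should a self-contained derivation be desired, I would follow their route rather than invent a new one. First I would use the minorisation to perform the Nummelin split, turning $C$ into an accessible atom $\alpha$, equivalently running a coupling of two copies that regenerates with probability $\delta$ at each joint visit to $C$. Next I would lift the univariate drift to the split chain, producing a geometric drift toward $\alpha$ with the contracted coefficients $\check{\lambda}=(\lambda+\gamma)/(1+\gamma)$ and $\check{b}=v+\gamma$, where the shift $\gamma=\delta^{-2}[4b+2\delta\lambda v]$ absorbs the cost of the split and of starting an excursion from $C$. The drift then controls the geometric moments of the return time to $\alpha$, the regeneration probability $\delta$ turns these into geometric tails for the coupling time $T$, and the $V$-geometric coupling inequality converts the tail of $T$ into the stated $V$-norm bound with rate $\rho^k$ and prefactor $(1+\gamma)\rho/(\rho-\vartheta)$. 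The structure of $\tilde{M}$ — the factor $(1-\check{\lambda})^{-2}$ and the $\bar{\zeta}$ term — arises as first- and second-moment bounds on the $V$-weighted occupation of a single regeneration cycle.

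The genuine difficulty, and the one I would simply import from \cite{meyn-tweedie-computable}, is the explicit second-moment estimate yielding $\bar{\zeta}\le(4-\delta^2)\delta^{-5}(b/(1-\lambda))^2$; this controls the variance of the regeneration cycle length and is the delicate part of the computable-bounds machinery. The remaining obstacle is purely bookkeeping: one must check that the present normalisation of Condition~\ref{cond:drift-mino} (in particular that $b$ multiplies $\charfun{x\notin C}$ rather than $\charfun{x\in C}$, and that $\nu$ is a probability measure) matches the conventions of \cite{meyn-tweedie-computable}, since a mismatch would shift the constants. Given that the statement is an exact quotation, I expect no mathematical obstacle beyond this constant-matching.
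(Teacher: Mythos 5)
Your proposal takes the same route as the paper: the paper offers no proof of Theorem~\ref{th:m-t-computable} at all, presenting it purely as a restatement of Theorem~2.3 of \citep{meyn-tweedie-computable}, so citing that result after matching Condition~\ref{cond:drift-mino} to its hypotheses (including the observations that the global minorisation is stronger than the on-$C$ one required, and that the $b\,\charfun{x\notin C}$ convention is harmless since $v=\sup_{x\in C}V(x)<\infty$ bounds the drift on $C$) is exactly what is needed. Your optional self-contained sketch via splitting and regeneration faithfully follows Meyn and Tweedie's own method, so no gap remains.
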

%
\begin{pf*}{Proof of Proposition~\ref{prop:drift-to-erg}} 
Let us first consider the claim for $r=1$.
Define first
\[
\bar{\zeta} :=\bigl(4-\delta^2\bigr) \delta^{-5}
b^2 (1-\lambda)^{-2} \le4 \delta^{-5}
\bar{b}^2 (1-\lambda)^{-2} ,
\]
and observe that $\gamma:=\delta^{-2}[4b + 2\delta\lambda v]
\le6 \delta^{-2} \bar{b}$. We also have
\[
\check{\lambda}:= \frac{\lambda+\gamma}{1+\gamma} \le\frac{\lambda+ 6 \delta^{-2}
\bar{b}}{1+6 \delta^{-2} \bar{b}} \quad \mbox{implying}\quad
\frac{1}{1-\check{\lambda}} \le\frac{1+6 \delta^{-2}
\bar{b}}{1-\lambda} \le\frac{7\delta^{-2}\bar{b}}{1-\lambda} .
\]
We have also $\check{b} :=v + \gamma\le7 \delta^{-2} \bar{b}$.
Now, we can bound
\begin{eqnarray*}
\tilde{M}& :=&\frac{1}{(1-\check{\lambda})^2} \bigl[1-\check{\lambda} + \check{b} +
\check{b}^2 + \bar{\zeta} \bigl(\check{b}(1-\check{\lambda}) +
\check{b}^2 \bigr) \bigr]
\\
&\le&\frac{1}{(1-\check{\lambda})^2} \bar{\zeta}\bigl(5\check{b}^2\bigr) \le48\mbox{,}020
(1-\lambda)^{-4}\delta^{-13}\bar{b}^6 .
\end{eqnarray*}
Now we can take $\rho_1 :=1 -  [100\mbox{,}000
(1-\lambda)^{-4}\delta^{-13}\bar{b}^6 ]^{-1}$
satisfying $\rho_1 > 1-\tilde{M}^{-1}/2$. Finally, the claim holds
with $c_1^*=c^*:=336\mbox{,}140$ by setting
\[
M_1 :=(1+\gamma)\frac{\rho}{\rho-(1-\tilde{M}^{-1})} \le(1+\gamma) 2\tilde{M}
\le336\mbox{,}140 (1-\lambda)^{-4} \delta^{-15}\bar{b}^7 .
\]

Let us consider then the case $r\in(0,1)$.
Observe first that by Jensen's inequality
\begin{eqnarray*}
P V^r(x) &\le& \bigl(P V(x) \bigr)^r \le
\lambda^r V^r(x) \qquad \mbox{for all } x\notin C,
\\
P V^r(x) &\le& \Bigl(\sup_{z\in C} V(z) + b
\Bigr)^r \le2^r (v\vee b)^r \qquad \mbox{for all }
x\in C.
\end{eqnarray*}
That is, Condition \ref{cond:drift-mino} holds for $V^r$
with $\lambda_r :=\lambda^r$,
$\bar{b}_r:=2 \bar{b}^r$, and
$v_r:=\sup_{x\in C} V^r(x) =  (\sup_{x\in C}
V(x) )^r = v^r$. Because $t\mapsto t^r$ is concave,
$\lambda^r \le1 - r(1-\lambda)$ and so $(1-\lambda^r)^{-1} \le r^{-1}
(1-\lambda)^{-1}$. We may take $c_r^* :=(2r^{-1})^4 c^*$.
\end{pf*}
%

\section{Noise condition for convergence theorem}\label{sec:conv-noise-theorem}\vspace*{-10pt}
%
\begin{pf*}{Proof of Theorem~\ref{thm:conv-noise-theorem}} 
We give only the required modifications to the proof of
Theorem~\ref{thm:general-noise-theorem} regarding
\eqref{eq:noise-negligible-poisson}. First, by symbolically
substituting
$\nabla{w}\equiv1$, it is sufficient to show that
claim holds for the following four terms in turn:
\begin{eqnarray*}
R_{m,n}^{1} & :=&\sum_{i=m}^{n}
\Gamma_{i+1} \bigl( g_{\theta_{i}}(X_{i+1})-P_{\theta_{i}}g_{\theta_{i}}(X_{i})
\bigr) \mathbb{I}\bigl\{A_{\mathcal{R}}^i\bigr\},
\\
R_{m,n}^{2} & :=&\sum_{i=m}^{n}
\Gamma_{i+1} \bigl( P_{\theta_{i}}g_{\theta_{i}}(X_{i})
-P_{\theta_{i-1}}g_{\theta_{i-1}}(X_{i}) \bigr) \mathbb{I}\bigl
\{A_\mathcal{R}^i\bigr\},
\\
R_{m,n}^{4} & := &\bigl(\Gamma_{m}
P_{\theta_{m-1}}g_{\theta_{m-1}}(X_{m}) - \Gamma_{n+1}
P_{\theta_{n}}g_{\theta_{n}}(X_{n+1}) \bigr) \mathbb{I}\bigl
\{A_\mathcal{R}^n\bigr\},
\\
R_{m,n}^{5} & :=&\sum_{i=m}^{n}(
\Gamma_{i+1}-\Gamma_{i}) P_{\theta_{i-1}}g_{\theta_{i-1}}(X_{i})
\mathbb{I}\bigl\{A_\mathcal{R}^{i-1}\bigr\} .
\end{eqnarray*}
The first term $R_{m,n}^{1}$ is a martingale, so
by Doob's inequality, \eqref{eq:V-exp-conv} and \eqref{eq:poisson-conv},
\begin{eqnarray*}
\Bigl(\E_{\theta,x} \Bigl[\sup_{n\ge m}\bigl|R_{m,n}^{1}\bigr|
\Bigr] \Bigr)^2 &\le& C \sum_{i=m}^\infty
\E_{\theta,x}\bigl[\Gamma_{i+1}^2 \bigl|
g_{\theta_{i}}(X_{i+1})-P_{\theta_{i}}g_{\theta_{i}}(X_{i})\bigr|^2
\mathbb{I}\bigl\{A_{\mathcal{R}}^i\bigr\} \bigr]
\\
&\le& CV^{2\beta_g}(x)\sum_{i=m}^{\infty}\E
\bigl[\Gamma_{i+1}^2\bigr] \mathop{\xrightarrow}^{m\to\infty}
0 .
\end{eqnarray*}
The claim for the second term is implied directly by
\eqref{eq:poisson-diff-conv}. For the term $R_{m,n}^{4}$, it is enough
to observe that
\[
\E_{\theta,x} \Bigl[\sup_{n\ge m} \bigl(R_{m,n}^{4}
\bigr)^2 \Bigr] \le 4 \sum_{i=m}^\infty
\E\bigl[\Gamma_i^2\bigr] \E_{\theta,x} \bigl[\bigl|
P_{\theta_{i-1}}g_{\theta_{i-1}}(X_{i})\bigr|^2 \mathbb{I}
\bigl\{A_\mathcal{R}^{i-1}\bigr\} \bigr] \le C
V^{2\beta_g}(x) \sum_{i=m}^\infty\E
\bigl[\Gamma_{i}^2\bigr] .
\]
Finally, we may employ Lemma~\ref{lemma:random-weights}
for $R_{m,n}^5$ with $U_i:=\Gamma_i$ and
$B_{i-1}\break  :=|P_{\theta_{i-1}}g_{\theta_{i-1}}(X_i)|
\mathbb{I}\{A_\mathcal{R}^{i-1}\}$ because
$\E_{\theta,x}[|B_{i-1}|]
\le C V^{\beta_g}(x) $ and
$\E_{\theta,x}[B_{i-1}^2]
\le C V^{2\beta_g}(x) $.
\end{pf*}
%


\section{Geometric ergodicity of IMH}\label{sec:imh} 

We provide here quantitative bounds for the ergodicity constants
for independent Metropolis--Hastings kernels. To our knowledge, the
results here are new, and can be useful also in other settings.

Recall that the independent Metropolis--Hastings kernel with target
density $\pi$ and proposal density $q$ on space $\mathsf{X}\subset\R^d$
is defined as
\[
P(x,A) :=\int_A \alpha(x,y) q(y) \,\ud y + \mathbb{I}\{x
\in A\} \biggl(1-\int_{\mathsf{X}} \alpha(x,y) q(y) \,\ud y \biggr)
\]
for all $x\in\mathsf{X}$ and measurable $A\subset\mathsf{X}$,
where the acceptance probability
$\alpha(x,y)$ is defined as
\[
\alpha(x,y) :=\min \biggl\{1, \frac{\pi(y)/q(y)}{\pi(x)/q(x)} \biggr\} .
\]

\begin{proposition}\label{prop:imh-bound} 
Assume $P$ is the independent Metropolis--Hastings kernel with target
density $\pi$ and proposal density $q$ satisfying
$\epsilon:=\inf_{x\in\mathsf{X}}
q(x)/\pi(x)>0$. Let $V\dvtx \mathsf{X}\to[1,\infty)$ be a function
with $q(V)<\infty$. Then,
\renewcommand\theenumi{(\roman{enumi})}
\renewcommand\labelenumi{\theenumi}
\begin{enumerate}[(ii)]
\item\label{item:imh-drift}
the drift inequality
\begin{eqnarray*}
P V(x) \le\rho V(x) + q(V) \qquad \mbox{for all } x\in\mathsf{X}
\end{eqnarray*}
holds with the constant $\rho:=1 - \epsilon$, and
\item\label{item:imh-geom-rate}
the following bound holds for
any measurable function $f\dvtx \mathsf{X}\to\R^d$ with
$\|f\|_V:=\sup_{x\in\mathsf{X}} |f(x)|/V(x) < \infty$,
all
$k\ge1$ and all $x\in\mathsf{X}$
\begin{eqnarray*}
\bigl| P^k f(x) - \pi(f) \bigr| \le k M (1-\epsilon)^k \|f
\|_V V(x),
\end{eqnarray*}
where the constant $M=q(V)[1 + \epsilon^{-1} + (1-\epsilon)^{-1}]$.
\end{enumerate}
\end{proposition}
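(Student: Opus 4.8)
The plan is to exploit the fact that the hypothesis $\epsilon = \inf_x q(x)/\pi(x) > 0$ forces a \emph{uniform minorisation} of $P$ by the target measure, from which both claims follow by elementary manipulations. For \eqref{item:imh-drift}, I would write $a(x)\defeq\int_{\mathsf{X}}\alpha(x,y)q(y)\,\ud y$ for the averaged acceptance probability, so that $PV(x) = \int\alpha(x,y)V(y)q(y)\,\ud y + (1-a(x))V(x)$. The first term is at most $q(V)$ since $\alpha\le 1$ and $V\ge 0$. For the second, observe that $\alpha(x,y)q(y) = \min\{q(y),\,(q(x)/\pi(x))\pi(y)\}\ge\epsilon\pi(y)$, because both $q(y)\ge\epsilon\pi(y)$ and $q(x)/\pi(x)\ge\epsilon$; integrating in $y$ gives $a(x)\ge\epsilon$, hence $1-a(x)\le 1-\epsilon=\rho$. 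This yields $PV(x)\le\rho V(x)+q(V)$.

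The same computation gives the minorisation $P(x,\cdot)\ge\epsilon\,\pi(\cdot)$, and this is the engine for \eqref{item:imh-geom-rate}. I would decompose $P = \epsilon\Pi + (1-\epsilon)R$, where $\Pi(x,\cdot)\defeq\pi(\cdot)$ and $R\defeq(1-\epsilon)^{-1}(P-\epsilon\Pi)$ is a genuine Markov kernel precisely by the minorisation (assuming $\epsilon<1$; the case $\epsilon=1$ forces $q=\pi$, making $M=\infty$ and the bound trivial). From $\pi P=\pi$ one checks $\pi R=\pi$, and writing $\bar f\defeq f-\pi(f)$ so that $\Pi\bar f=\pi(\bar f)=0$, an induction on $k$ establishes the key identity $P^k f - \pi(f) = P^k\bar f = (1-\epsilon)^k R^k\bar f$: the inductive step uses $\Pi(R^j\bar f)=\pi(R^j\bar f)=\pi(\bar f)=0$, so each application of $P$ annihilates the $\Pi$-component and contributes exactly one factor $(1-\epsilon)$ and one further power of $R$.

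It then remains to bound $|R^k\bar f(x)|\le R^k|\bar f|(x)$, for which two auxiliary estimates feed in. First, combining \eqref{item:imh-drift} with the minorisation gives $\epsilon\,\pi(V)\le PV(x)<\infty$ for fixed $x$, so $\pi(V)<\infty$, and the self-bounding relation $\pi(V)=\pi(PV)\le\rho\,\pi(V)+q(V)$ then yields $\pi(V)\le q(V)/\epsilon$. Second, $(1-\epsilon)RV = PV-\epsilon\,\pi(V)\le(1-\epsilon)V+q(V)$ gives the (non-contractive) drift $RV\le V+q(V)/(1-\epsilon)$, which iterates to $R^kV\le V+k\,q(V)/(1-\epsilon)$. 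Using $|\bar f|\le\|f\|_V V+|\pi(f)|$ together with $|\pi(f)|\le\|f\|_V\pi(V)$ gives $R^k|\bar f|(x)\le\|f\|_V\big(V(x)+k\,q(V)/(1-\epsilon)+q(V)/\epsilon\big)$, and since $k\ge 1$, $V(x)\ge 1$ and $q(V)\ge 1$ one factors out $kV(x)$ to bound this by $kM\|f\|_V V(x)$ with $M=q(V)[1+\epsilon^{-1}+(1-\epsilon)^{-1}]$. Multiplying through by $(1-\epsilon)^k$ delivers \eqref{item:imh-geom-rate}.

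The only genuinely delicate point is the telescoping identity $P^k\bar f=(1-\epsilon)^k R^k\bar f$; the remaining steps are routine drift and self-bounding arguments. It is worth noting that the factor $k$ in the final rate is not slack but is forced by the $V$-norm: the residual kernel $R$ admits only the non-contractive drift $RV\le V+\text{const}$, so $R^kV$ grows linearly in $k$, and this linear growth is exactly what survives once the geometric factor $(1-\epsilon)^k$ has been extracted.
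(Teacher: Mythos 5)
Your proposal is correct and follows essentially the same route as the paper: the same minorisation-based drift computation for (i), and for (ii) the same decomposition $P=\epsilon\Pi+(1-\epsilon)R$ (the paper's residual kernel $Q$) with the telescoping identity, the linear-in-$k$ drift $R^kV\le V+k\,q(V)/(1-\epsilon)$, and the bound $\pi(V)\le q(V)/\epsilon$. The only cosmetic difference is that the paper obtains $\pi(V)\le\epsilon^{-1}q(V)$ directly from $\pi(V)=\int(\pi/q)Vq$ rather than via your invariance/self-bounding argument, but both are valid and yield the same constants.
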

%
%
\begin{pf} 
Denote by $r(x) :=\pi(x)/q(x)$ so that $\alpha(x,y) =
\min\{1,r(x)/r(y)\}$ and compute
\[
\frac{P V(x)}{V(x)} -1 = \frac{\int V(y) \alpha(x,y) q(y) \,\ud y}{V(x)} - \int\min\bigl\{
r^{-1}(y), r^{-1}(x) \bigr\} \pi(y) \,\ud y \le
\frac{q(V)}{V(x)} - \epsilon.
\]
This readily implies \ref{item:imh-drift}.\vadjust{\goodbreak}

Observe then that for any measurable $A\subset\mathsf{X}$, the
following uniform minorisation inequality holds
\[
P(x,A) \ge\int_A \alpha(x,y) q(y)\, \ud y \ge\epsilon
\pi(A) .
\]
By this inequality, one can define a Markov kernel $Q(x,A) :=
(1-\epsilon)^{-1} (P(x,A)-\epsilon\pi(A) )$. By
\ref{item:imh-drift}, we have $Q V(x) \le(1-\epsilon)^{-1}
(\rho
V(x) + q(V) ) = V(x) + (1-\epsilon)^{-1}q(V)$ so by induction we obtain
\[
Q^k V(x) \le V(x) + k(1-\epsilon)^{-1} q(V) .
\]
Observe that for any probability measure $\nu$ with $\nu(V)<\infty$,
one has $\nu(|f|) \le\|f\|_V \nu(V)$, and that
\[
\pi(V) = \int\frac{\pi(x)}{q(x)} V(x) q(x) \,\ud x \le\epsilon^{-1} q(V)
.
\]
Note that $\pi Q = \pi$, whence by denoting $\Pi(x, \cdot ) :=\pi
( \cdot )$
one can compute for any $k\ge1$
\begin{eqnarray*}
\bigl|P^{k} f(x) - \pi(f)\bigr| &=& \bigl|(P-\Pi) P^{k-1}f(x)\bigr| = (1-
\epsilon)\bigl|(Q-\Pi) P^{k-1}f(x)\bigr|
\\
&=& (1-\epsilon)\bigl| QP^{k-1} f(x) - \pi(f)\bigr| = {\cdots} = (1-
\epsilon)^k\bigl|Q^k f(x) - \pi(f)\bigr|
\\
&\le&(1-\epsilon)^k \bigl(V(x) + k(1-\epsilon)^{-1} +
\epsilon^{-1} \bigr)\|f\|_V q(V) , %
\end{eqnarray*}
establishing \ref{item:imh-geom-rate}.
\end{pf}
%
%
\begin{corollary}\label{cor:imh-bound-geometric} 
In Proposition~\textup{\ref{prop:imh-bound}}, the bound
\textup{\ref{item:imh-geom-rate}}
can be replaced with the following
\[
\bigl| P^k f(x) - \pi(f) \bigr| \le M' (1-\zeta
\epsilon)^k \|f\|_V V(x) ,
\]
where $\zeta\in(0,1)$ can be chosen arbitrarily and where
\[
M' = \frac{M}{\mathrm{e}} \biggl[ \log \biggl(\frac{1-\zeta\epsilon}{1-\epsilon}
\biggr) \biggr]^{-1} .
\]
If $\epsilon\le1/2$, then $M'$ can be taken as $M' =
2M[\mathrm{e}(1-\zeta)\epsilon]^{-1}$.
\end{corollary}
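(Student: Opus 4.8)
The plan is to start from Proposition \ref{prop:imh-bound}(\ref{item:imh-geom-rate}), namely $|P^k f(x)-\pi(f)|\le kM(1-\epsilon)^k\|f\|_V V(x)$, and to absorb the awkward linear prefactor $k$ into a marginally slower geometric rate. Writing $t\defeq (1-\epsilon)/(1-\zeta\epsilon)$, which lies in $(0,1)$ because $\zeta\in(0,1)$ forces $1-\zeta\epsilon>1-\epsilon$, I would factor $k(1-\epsilon)^k=(kt^k)(1-\zeta\epsilon)^k$ and then bound the coefficient $kt^k$ uniformly over $k\ge 1$.

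The key step is the scalar optimisation of $x\mapsto xt^x=x e^{x\log t}$ over $x>0$. Differentiating gives a single critical point at $x^*=-1/\log t=1/\log(1/t)$, at which $t^{x^*}=e^{-1}$, so $\sup_{x>0}xt^x=(e\log(1/t))^{-1}$. Since $\log(1/t)=\log((1-\zeta\epsilon)/(1-\epsilon))$, this yields $kt^k\le [e\log((1-\zeta\epsilon)/(1-\epsilon))]^{-1}$ for every $k\ge 1$. Multiplying through by $M$ and by the retained factor $(1-\zeta\epsilon)^k$ reproduces precisely the claimed bound with $M'=(M/e)[\log((1-\zeta\epsilon)/(1-\epsilon))]^{-1}$.

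For the simplified constant under the extra hypothesis $\epsilon\le 1/2$, I would lower-bound the logarithm elementarily. Writing $\log((1-\zeta\epsilon)/(1-\epsilon))=\log(1+u)$ with $u\defeq (1-\zeta)\epsilon/(1-\epsilon)$, the assumption $\epsilon\le 1/2$ guarantees $u\le 1$: indeed $(1-\zeta)\epsilon\le\epsilon\le 1-\epsilon$. This permits the inequality $\log(1+u)\ge u/2$, valid for $u\in[0,1]$ (its difference has derivative $1/(1+u)-1/2\ge 0$ there and vanishes at $u=0$). Hence $\log(1+u)\ge (1-\zeta)\epsilon/(2(1-\epsilon))\ge (1-\zeta)\epsilon/2$, giving $M'\le 2M[e(1-\zeta)\epsilon]^{-1}$.

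I expect the only genuinely delicate point to be the optimisation $\sup_{x>0}xt^x=(e\log(1/t))^{-1}$ together with its clean reformulation in terms of the stated logarithm; the remainder is bookkeeping. I would flag the verification $u\le 1$ as the step most easily overlooked, since it is exactly the place where the hypothesis $\epsilon\le 1/2$ is consumed to keep $u$ within the range in which $\log(1+u)\ge u/2$ holds.
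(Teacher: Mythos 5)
Your proof is correct and follows essentially the same route as the paper: both rest on the optimisation $\sup_{x>0}xa^x=\big(e\log(1/a)\big)^{-1}$ applied to $a=(1-\epsilon)/(1-\zeta\epsilon)$, and both obtain the simplified constant for $\epsilon\le 1/2$ from an elementary lower bound on the logarithm (your $\log(1+u)\ge u/2$ on $[0,1]$ versus the paper's $\log(1+h)\ge h-\tfrac12 h^2$, which lead to the same estimate). Your explicit check that $u\le 1$ is exactly where the hypothesis $\epsilon\le 1/2$ is used, matching the paper.
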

%
%
\begin{pf} 
From Proposition~\ref{prop:imh-bound}, we obtain
%
%
\begin{eqnarray*}
\bigl| P^k f(x) - \pi(f) \bigr| &\le& k M (1-\epsilon)^k \|f
\|_V V(x)
\\
&\le& M' (1-\zeta\epsilon)^k \|f\|_V
V(x) ,
\end{eqnarray*}
with
\[
M' :=M \sup_{k\ge1} k \biggl(\frac{1-\epsilon}{1-\zeta\epsilon}
\biggr)^k \le\frac{M}{\mathrm{e}} \biggl[ \log \biggl(\frac{1-\zeta\epsilon}{1-\epsilon}
\biggr) \biggr]^{-1} ,
\]
since by a straightforward calculation one obtains for any $a\in(0,1)$
that $\sup_{x>0} x a^x =  (\mathrm{e} \log(1/a) )^{-1}$.
Suppose then that $\epsilon\le1/2$ and
notice that for any $h> 0$ one has
$\log(1+h) \ge h - \frac{1}{2}h^2$ and so
\[
\log \biggl(\frac{1-\zeta\epsilon}{1-\epsilon} \biggr) \ge\frac{(1-\zeta)\epsilon}{1-\epsilon} \biggl(1-
\frac{1}{2} \frac{(1-\zeta)\epsilon}{1-\epsilon} \biggr) \ge\frac{1}{2}(1-\zeta)
\epsilon.
\]\upqed
\end{pf}
%


\section{Nomenclature}\label{sec:nom} 
\begin{itemize}
\item$\alpha_w$ in Condition \ref{cond:w}, page \pageref{cond:w},
related to
the growth of $\sup_{\theta\in\mathcal{R}_i}|\nabla w(\theta)|$.
\item$\alpha_H,\beta_H$ in Condition \ref{cond:general-noise-theorem-cond},
page \pageref{cond:general-noise-theorem-cond}, characterise
$\sup_{\theta\in\mathcal{R}_i} |H(\theta,x)| $.
\item$\alpha_V,\beta_V$ in Condition \ref{cond:general-noise-theorem-cond},
page \pageref{cond:general-noise-theorem-cond}, characterise
$\E_{\theta,x}[V(X_i)] $.
\item$\alpha_g,\beta_g$ in Condition \ref{cond:general-noise-theorem-cond},
page \pageref{cond:general-noise-theorem-cond}, characterise
$\sup_{\theta\in\mathcal{R}_i} [
|g_\theta(x)| + |P_\theta g_\theta(x)|] $.
\item$\beta_D$ in Condition \ref{cond:continuity},
page \pageref{cond:continuity}, characterises
the H\"older continuity of
$\llVert  P_{\theta}f-P_{\theta^{\prime}}f\rrVert_{V^r}$.
\item$\alpha_\Delta,\beta_\Delta$ in Condition \ref{cond:contin-poisson},
page \pageref{cond:contin-poisson}, characterise the size of
$\sup_{(\theta,\theta')\in\mathcal{R}_i^2} \llVert  H(\theta
, \cdot )
-H(\theta^{\prime}, \cdot )\rrVert_{V^{\beta_H}}$.
\item$\alpha_M$ and $\alpha_\rho$ are defined in Condition
\ref{cond:geom-rates}, page \pageref{cond:geom-rates}, and
characterise the loss of ergodicity through the growth of
geometric ergodicity constants
$\sup_{\theta\in\mathcal{R}_i} M_{\theta,r} $ and
$\sup_{\theta\in\mathcal{R}_i} (1-\rho_{\theta,r})^{-1}$, respectively.
\end{itemize}
%
\end{appendix}

\section*{Acknowledgements}

We thank Harriet Bass and the referees for helpful comments.
The work of the first author was supported in part by an EPSRC
advance research fellowship and a Winton Capital research award.
The second author was supported by the Academy of Finland
Project 250575, by the Finnish Academy of Science and Letters, Vilho,
Yrj\"{o} and Kalle V\"{a}is\"{a}l\"{a} Foundation, by the Finnish
Centre of Excellence
in Analysis and Dynamics Research, and by the Finnish Doctoral
Programme in Stochastics and Statistics.



\printhistory

\end{document}